\documentclass[11pt,letterpaper,reqno]{amsart}
\usepackage{amsmath,amsfonts,mathtools,amssymb,latexsym,amsthm,adjustbox,mathrsfs,amsbsy,bm,tikz-cd,indentfirst,makecell,graphicx,verbatim,enumerate}
\usepackage{titlesec}
\makeatletter
\providecommand\@secnumpunct{.}
\makeatother
\usepackage[linktocpage=true, colorlinks=, citecolor=, linkcolor=, urlcolor=]{hyperref}
\usepackage[top=1in,bottom=1in,left=1in,right=1in]{geometry}
\usepackage[utf8]{inputenc}
\theoremstyle{plain}

\usepackage{xcolor}
\titleformat{name=\section}{}{\thetitle.}{0.5em}{\centering\scshape\large}

\newtheorem*{theorem*}{Theorem}

\newcommand{\NN}{\ensuremath{\mathbb{N}}}

\newcommand{\too}{\longrightarrow}
\DeclareMathOperator{\Der}{Der}
\DeclareMathOperator{\charr}{char}
\newcommand{\gr}{\mathrm{gr}}
\numberwithin{equation}{section}
\theoremstyle{plain}
\newtheorem{question}{Question}
\newtheorem{theorem}[equation]{Theorem}
\newtheorem{corollary}[equation]{Corollary}
\newtheorem{lemma}[equation]{Lemma}
\newtheorem{proposition}[equation]{Proposition}
\theoremstyle{definition}
\newtheorem{definition}[equation]{Definition}
\newtheorem{example}[equation]{Example}
\theoremstyle{remark}
\newtheorem{remark}[equation]{Remark}

\usepackage[style=numeric, sorting=nyt, backend=biber]{biblatex}
\begin{document}
\title{GRADED DIFFERENTIAL POLYNOMIAL RINGS}
\author{Yassine Ait Mohamed}
\address{D\'epartement de math\'ematiques,
Universit\'e de Sherbrooke,
2500 Bd de l'Universit\'e,
Sherbrooke, QC, J1K 2R1, Canada}
\email{yassine.ait.mohamed@usherbrooke.ca}
\date{}
\subjclass[2020]{16W25, 16W50, 16P40, 16S36}
\keywords{Homogeneous derivations, differential polynomial rings, gr-simplicity, gr-primality, gr-Noetherian rings, graded Morita equivalence}

\begin{abstract}
Let $R$ be a $\Gamma$-graded ring and $\delta$ a derivation of $R$. We determine exactly when the differential polynomial ring $R[t;\delta]$ admits a grading compatible with that of $R$: this happens if and only if $\delta$ is a $\gamma$-derivation for some $\gamma$ in the centralizer of the support, in which case the grading is explicit and unique once $\deg(t)$ is fixed. Over an arbitrary group, we establish graded analogues of the classical simplicity, primeness, and Noetherianity theorems; in characteristic zero, $R[t;\delta]$ is gr-simple if and only if $R$ is $\delta$-gr-simple and $\delta$ is $\gamma$-outer, and in arbitrary characteristic we obtain a graded \"{O}inert--Silvestrov criterion when $\Gamma$ is orderable and the nonzero homogeneous elements of $R[t;\delta]$ are regular. Finally, we show that the differential polynomial structure is invariant under
graded stable isomorphism.
\end{abstract}

\maketitle

\section{Introduction}
Let $R$ be a ring and $\delta$ a derivation of $R$. The differential polynomial ring $R[t;\delta]$, introduced by Ore \cite{ore1933theory}, is the free left $R$-module on the powers $t^k$, $k \in \mathbb{N}$, with multiplication governed
by the single rule
\begin{equation}\label{SECPPAZ9}
  tx = xt + \delta(x) \quad (x \in R),
\end{equation}
extended by associativity and distributivity. When $\delta = 0$ it reduces to the ordinary polynomial ring $R[t]$. It is the case $\sigma = \mathrm{id}_R$ of the Ore extension $R[t;\sigma,\delta]$, and rings of this form are pervasive in noncommutative algebra: Weyl algebras, enveloping algebras of solvable Lie algebras, and many quantum algebras and their localizations are of this kind \cite{goodearl2004introduction,rowen1988ring}. We treat only $\sigma =\mathrm{id}_R$ throughout; the case $\sigma \neq \mathrm{id}_R$ behaves differently and lies outside the scope of this paper.

The ideal-theoretic structure of $R[t;\delta]$ over an ungraded base is well understood. Its primeness and simplicity are governed by theorems of Jordan and of \"{O}inert and Silvestrov, and its Noetherianity by the Hilbert basis theorem. The graded counterparts of these results have not been treated before, and establishing them is the purpose of this paper. We first recall the three classical statements.

\begin{theorem*}[Jordan \cite{jordan1979,jordan1977primitive, jordan1975noetherian}]
Let $R$ be a ring and $\delta$ a derivation of $R$. Then $R[t;\delta]$ is prime if and only if $R$ is $\delta$-prime. If moreover $R$ is a $\mathbb{Q}$-algebra, then $R[t;\delta]$ is simple if and only if $R$
is $\delta$-simple and $\delta$ is outer.
\end{theorem*}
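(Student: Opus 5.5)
The plan is to prove the two assertions separately, using the standard degree and leading-coefficient arguments in $S=R[t;\delta]$. Recall that $R$ is $\delta$-prime if the product of any two nonzero $\delta$-ideals is nonzero, and $\delta$-simple if its only $\delta$-ideals are $0$ and $R$. The basic tool is that for any ideal $I$ of $S$ and any $n\ge 0$, the set
\[
L_n(I)=\{\text{leading coefficients of degree-}n\text{ elements of } I\}\cup\{0\}
\]
is an ideal of $R$. It is a $\delta$-ideal, since $tf-ft$ has degree-$n$ coefficient $\delta(a)$ when $f$ has leading coefficient $a$. Moreover $L_n(I)\subseteq L_{n+1}(I)$, because $ft$ has the same leading coefficient as $f$ and degree one higher. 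In the other direction, if $J$ is a $\delta$-ideal of $R$, then $JS=SJ$ is a two-sided ideal of $S$. The identity $tx=xt+\delta(x)$ shows $SJ\subseteq JS$, and the other inclusion is symmetric.

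\emph{Primeness.} Suppose $S$ is prime and $J,K$ are nonzero $\delta$-ideals of $R$. Then $JS$ and $KS$ are nonzero ideals of $S$, so $0\ne JS\,KS=JKS$, which forces $JK\ne 0$. Conversely, suppose $R$ is $\delta$-prime and $I,I'$ are nonzero ideals of $S$ with $II'=0$. Choose minimal degrees $n,m$ with $J=L_n(I)\ne0$ and $K=L_m(I')\ne0$. The main obstacle is to show $JK=0$. For $f\in I$ of degree $n$ with leading coefficient $a$ and $g\in I'$ of degree $m$ with leading coefficient $b$, and any $r\in R$, the product $frg$ has top coefficient $arb$ in degree $n+m$, because in a differential operator ring leading terms multiply without twisting. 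Since $frg\in II'=0$, we get $aRb=0$. Thus $JK=0$ with $J,K$ nonzero $\delta$-ideals, contradicting $\delta$-primeness. This direction does not need the minimality of $n,m$.

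\emph{Simplicity, forward direction.} Assume $S$ is simple and $R$ is a $\mathbb{Q}$-algebra. If $J$ is a nonzero proper $\delta$-ideal, then $JS$ is a proper nonzero ideal of $S$, since its elements have all coefficients in $J$. Hence $R$ is $\delta$-simple. If $\delta=\operatorname{ad}(c)$ is inner, then $t-c$ commutes with $R$ and with $t$, so it is central. The ideal $(t-c)S$ is proper, since every nonzero element has degree at least $1$, and it is nonzero. This contradicts simplicity, so $\delta$ is outer. This direction does not use $\mathbb{Q}$.

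\emph{Simplicity, converse.} Assume $R$ is $\delta$-simple, $\delta$ is outer, and $R$ is a $\mathbb{Q}$-algebra. Let $I\ne0$ be an ideal and $n$ the minimal degree occurring in $I$. Then $L_n(I)$ is a nonzero $\delta$-ideal, hence equals $R$, so there is $f=t^n+a_{n-1}t^{n-1}+\dots\in I$.
\begin{itemize}
\item For $r\in R$, the element $fr-rf$ lies in $I$ and has degree less than $n$. Its degree-$(n-1)$ coefficient is $n\delta(r)+a_{n-1}r-ra_{n-1}$. By minimality $fr-rf=0$, so $n\delta(r)=[r,a_{n-1}]$.
\item If $n\ge1$, we can divide by $n$ because $\mathbb{Q}\subseteq R$. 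This gives $\delta=\operatorname{ad}(-a_{n-1}/n)$, so $\delta$ is inner, a contradiction.
\end{itemize}
Therefore $n=0$, so $1\in I$ and $I=S$, proving $S$ is simple. The points to verify carefully are the binomial coefficient $n$ in the expansion $t^n r=rt^n+n\delta(r)t^{n-1}+\cdots$, and that division by $n$ is exactly where the $\mathbb{Q}$-algebra hypothesis enters.
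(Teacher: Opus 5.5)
Your proof is correct. It is essentially the argument the paper uses for its graded analogues, since the paper only cites Jordan for the ungraded statement: leading-coefficient $\delta$-ideals as in Lemma \ref{REM5HP}, extension ideals $JS=J[t;\delta]$ for the converse directions, a monic minimal-degree element forced to commute with $R$ so that $n\delta$ becomes inner, and the central element $t-c$ when $\delta$ is inner (Propositions \ref{REMSPM}, \ref{Y31A66}, \ref{EQ9ESQ}, \ref{LMQ7H}). The differences are cosmetic: you write $JS\cdot KS=JKS$ where the paper computes coefficients, and you use $frg$ where $fg$ already gives $ab=0$.
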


The hypothesis that $R$ be a $\mathbb{Q}$-algebra is not an extra
assumption in Jordan's theorem; it is forced. Suppose $R$ is
$\delta$-simple of characteristic zero. For each integer $n \geq 1$ the set of multiples $nR$ is a nonzero, two-sided, $\delta$-invariant ideal of $R$, hence equal to $R$ by $\delta$-simplicity, so every positive integer is invertible in $R$. One may therefore replace ``$R$ is a $\mathbb{Q}$-algebra'' by ``$\operatorname{char} R = 0$'' without
changing the class of rings the theorem describes: in the presence of $\delta$-simplicity, the two hypotheses identify the same rings.

\begin{theorem*}[\"{O}inert--Silvestrov \cite{oinert2013maximal}]
Let $R$ be a $\delta$-simple ring and $\delta$ a derivation of $R$.
Then $R[t;\delta]$ is simple if and only if $Z(R[t;\delta])$ is a field.
\end{theorem*}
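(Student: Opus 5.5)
The plan is to prove the two implications separately. The forward direction is formal, and the reverse direction runs a minimal-degree argument on a nonzero ideal of $S=R[t;\delta]$.

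\emph{Forward direction.} Suppose $S$ is simple and $z\in Z(S)$ is nonzero. Then $zS=Sz$ is a nonzero two-sided ideal, so $zS=S$ and $z$ has a right inverse $w$. The same reasoning applied to $Sz$ gives a left inverse, so $z$ is a unit. The inverse of a central unit is again central, since $z^{-1}s=z^{-1}szz^{-1}=z^{-1}zsz^{-1}=sz^{-1}$. Hence $Z(S)$ is a commutative ring in which every nonzero element is invertible. It is nonzero because $1\neq 0$: $R$ is $\delta$-simple and hence nonzero. So $Z(S)$ is a field.

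\emph{Reverse direction.} Assume $Z(S)$ is a field and let $I\neq 0$ be an ideal of $S$. Let $n$ be the minimal degree of nonzero elements of $I$. Let $J\subseteq R$ consist of $0$ together with the leading coefficients of the elements of $I$ of degree $n$.

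First, $J$ is a two-sided ideal of $R$. For $p\in I$ and $r\in R$, the products $rp$ and $pr$ have leading coefficients $ra$ and $ar$, because $\sigma=\mathrm{id}$ and so $t^n r = r t^n + (\text{lower terms})$.

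Second, $J$ is $\delta$-invariant. For $p=at^n+\dots\in I$, the commutator $tp-pt$ lies in $I$ and has degree at most $n$, with $t^n$-coefficient $\delta(a)$.

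By $\delta$-simplicity $J=R$, so there is $p=t^n+a_{n-1}t^{n-1}+\dots\in I$ with $p$ monic.

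\emph{Key step: showing $p$ is central.} For $r\in R$, the element $rp-pr\in I$ has degree less than $n$, because the $t^n$-coefficients are $r$ and $r$. By minimality of $n$ it vanishes. Likewise $tp-pt\in I$ has $t^n$-coefficient $\delta(1)=0$, so it has degree less than $n$ and vanishes. Since $R$ and $t$ generate $S$, it follows that $p\in Z(S)$. As $p\neq 0$ and $Z(S)$ is a field, $p$ is invertible in $Z(S)\subseteq S$. Therefore $1=p^{-1}p\in I$ and $I=S$, so $S$ is simple.

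\emph{Where the care goes.} I expect the main obstacle to be the bookkeeping in the key step: checking that the top coefficients really cancel in $rp-pr$ and in $tp-pt$. This requires expanding $t^k r=\sum_j\binom{k}{j}\delta^{j}(r)t^{k-j}$ and confirming that only the leading term contributes at degree $n$. Once that is verified, the argument needs no characteristic assumption. This is consistent with the statement, which carries no $\mathbb{Q}$-algebra hypothesis.
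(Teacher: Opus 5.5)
Your proof is correct and matches the paper's approach. The paper only cites this theorem, but its graded analogue (Propositions \ref{THMPMY} and \ref{EXPVUX}) runs exactly your argument: the leading-coefficient ideal in minimal $t$-degree (Lemma \ref{REM5HP}) is forced to equal $R$ by $\delta$-simplicity, a monic element of that degree commutes with $R$ and $t$ by minimality and is therefore central, and it is invertible by the field hypothesis, while the forward direction is the general fact, noted in the paper's introduction, that the centre of a simple unital ring is a field.
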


The simplicity of $R[t;\delta]$ admits a clean characterization, valid in every characteristic: $R[t;\delta]$ is simple if and only if $R$ is $\delta$-simple and the centre $Z(R[t;\delta])$ is a field. The necessity of the first condition is immediate, and that of the second follows from the general fact that the centre of a simple unital ring is a field; the substance of the criterion lies in the converse. In characteristic zero this second condition is frequently automatic once $R$ is $\delta$-simple,
whereas in characteristic $p$ it genuinely constrains $R$, since the $p$-th power $\delta^p$ may then contribute extra central elements (a $p$-curvature phenomenon).

\begin{theorem*}[Hilbert Basis Theorem \cite{goodearl2004introduction}]
If $R$ is left \textup{(}resp.\ right\textup{)} Noetherian, then
$R[t;\delta]$ is left \textup{(}resp.\ right\textup{)} Noetherian.
\end{theorem*}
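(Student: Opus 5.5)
The plan is the standard leading-coefficient argument, carried out for left ideals; the right-handed case is symmetric. Let $S=R[t;\delta]$. First I record two facts that follow from the rule \eqref{SECPPAZ9}. By induction, $t^k x = \sum_{i=0}^{k}\binom{k}{i}\delta^{i}(x)t^{k-i}$. Hence $t^k x = x t^k + (\text{terms of degree} < k)$, so $S$ is also free as a right $R$-module on the powers $t^k$. Moreover, for any $f=\sum_{i\le n} a_i t^i$ and $r\in R$, the product $r f$ has $t$-degree at most $n$, and its coefficient of $t^n$ is $r a_n$. This second fact is what makes leading coefficients of a left ideal behave well.

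Now fix a left ideal $I$ of $S$. For each $n\ge 0$, let $L_n(I)$ be the set of $a\in R$ such that $a=0$ or $a$ is the leading coefficient of some $f\in I$ of degree $n$. Three properties are needed.
\begin{enumerate}[(i)]
\item $L_n(I)$ is a left ideal of $R$. Closure under addition comes from adding two elements of $I$ of degree $n$. Closure under left multiplication by $r\in R$ comes from the fact above, since $rf\in I$ has leading term $r a_n t^n$ or smaller degree (when $ra_n=0$).
\item $L_n(I)\subseteq L_{n+1}(I)$. Multiply on the left by $t$: $tf = a_n t^{n+1} + (\text{lower})$, because $t a_n t^n = a_n t^{n+1} + \delta(a_n)t^n$.
\item If $I\subseteq J$ and $L_n(I)=L_n(J)$ for all $n$, then $I=J$. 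Take $g\in J\setminus I$ of minimal degree $n$. There is $f\in I$ of degree $n$ with the same leading coefficient, and then $g-f\in J\setminus I$ has smaller degree, a contradiction.
\end{enumerate}

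Given an ascending chain $I_1\subseteq I_2\subseteq\cdots$ of left ideals of $S$, consider the doubly indexed family $L_n(I_m)$. It is increasing in both $n$ and $m$. The ascending chain $L_n(I_n)$ stabilizes at some $N$, and $L:=L_N(I_N)$ contains every $L_n(I_m)$. For each $n<N$, the chain $L_n(I_1)\subseteq L_n(I_2)\subseteq\cdots$ stabilizes at some $m_n$. Take $M=\max(N,m_0,\dots,m_{N-1})$. For $m\ge M$ and $n\ge N$, we have $L\supseteq L_n(I_m)\supseteq L_N(I_N)=L$. So for all $n$, $L_n(I_m)=L_n(I_M)$ whenever $m\ge M$, and by (iii) the chain stabilizes.

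The step needing care is (ii). In a general Ore extension, left multiplication by $t$ twists the leading coefficient by $\sigma$. Here $\sigma=\mathrm{id}_R$, so it does not, and only $\delta$ introduces lower-order corrections. For the right-Noetherian case, I will use the right $R$-basis $\{t^k\}$ and write elements as $\sum t^i b_i$. The identity $xt = tx-\delta(x)$ shows that $f r$ has leading coefficient $b_n r$, and $ft$ has leading coefficient $b_n$. The same argument then applies verbatim with right ideals.
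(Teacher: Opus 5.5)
Your proof is correct and follows the same leading-coefficient argument that the paper uses for its graded analogue, Proposition \ref{CORIVL}; the paper only cites the ungraded statement, which is the trivial-grading case of that proposition. The one difference is that for right ideals you switch to the right $R$-basis $\{t^k\}$, whereas the paper keeps left coefficients throughout; this already suffices because $t^n r = r t^n + (\text{lower terms})$ gives $\mathrm{lc}(fr) = \mathrm{lc}(f)\,r$ and $\mathrm{lc}(ft) = \mathrm{lc}(f)$, so the right-freeness step is not needed.
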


A grading changes each of these statements, because every one of the three properties admits a graded counterpart that is strictly weaker than the property itself. Since graded ideals (those respecting the grading) form a much smaller class than arbitrary ideals, restricting the test to homogeneous ideals can only make the property easier to satisfy, never harder. A gr-simple ring need not be simple. The group algebra $\mathbb{F}[\Gamma]$ of a nontrivial group, equipped with its canonical
$\Gamma$-grading, is gr-simple, since the group acts transitively on the homogeneous components and hence its only graded ideals are $0$ and $\mathbb{F}[\Gamma]$. It is never simple, however: the augmentation ideal $\ker(\varepsilon)$, where $\varepsilon : \mathbb{F}[\Gamma] \to \mathbb{F}$
sends $\gamma \mapsto 1$, is a nonzero proper two-sided ideal that fails to be graded. In the same way a gr-prime ring need not be prime \cite{jeffry1998}, and a gr-Noetherian ring need not be Noetherian \cite{kamoi1995noetherian,goto1983finite}. A graded theorem is therefore not a special case of the ungraded one: it applies even to rings, such as $\mathbb{F}[\Gamma]$, for which the ungraded statement fails outright.

Many differential polynomial rings are built over a base ring 
$R$ that already carries a group grading, and for such rings one expects graded versions of the classical theorems. The purpose of this paper is to establish these versions.

\begin{question}\label{SECYBL0A}
Do the theorems above for $R[t;\delta]$ admit graded analogues?
\end{question}

A first question must be settled before any such analogue can even be stated: for it to be well-defined, $R[t;\delta]$ must itself carry a grading compatible with that of $R$, and this is not automatic. 

The first Weyl algebra shows how such a grading can arise. The
algebra
\[
  A_1(k) = k\langle x,y\rangle/(yx-xy-1)
\]
is isomorphic to $k[x]\bigl[y;\tfrac{d}{dx}\bigr]$
\cite{bavula2020classification}. Here $k[x]$ carries its natural
$\mathbb{Z}$-grading and $\tfrac{d}{dx}$ lowers degree by one; setting
$\deg(y) = -1$ makes the relation $yr = ry + \tfrac{d}{dx}(r)$
homogeneous for every homogeneous $r \in k[x]$ and the inclusion
$k[x] \hookrightarrow k[x][y;d/dx]$ a graded map. Whether a grading of this kind always exists is our second question.
\begin{question}\label{LEMT8C}
When does $R[t;\delta]$ carry a $\Gamma$-grading compatible with that of
$R$?
\end{question}

By \emph{compatible} we mean that
\begin{itemize}
  \item[(i)] the inclusion $\iota \colon R \hookrightarrow R[t;\delta]$
    is a graded ring homomorphism;
  \item[(ii)] the relation \eqref{SECPPAZ9} is homogeneous.
\end{itemize}
We call such a grading \emph{good}.

A derivation of $R$ need not carry homogeneous elements to homogeneous
elements \cite{ait2024}, so we restrict to homogeneous derivations
\cite{GradQ}, those that respect the grading of $R$. The simplest are
the degree-preserving ones. If $\delta(R_\sigma) \subseteq R_\sigma$
for every $\sigma \in \Gamma_R$, then $\deg(t) = e$ already yields a
good grading, with $R[t;\delta]_\sigma = R_\sigma[t]$. The derivation
$d/dx$ is not of this type: it lowers each homogeneous component by
one. To allow a uniform shift of this kind we introduce in
Definition \ref{LEMDO1X} the $\gamma$-derivations, those satisfying
$\delta(R_\sigma) \subseteq R_{\sigma\gamma}$ for one fixed
$\gamma \in \Gamma$ and all $\sigma \in \Gamma_R$. For this class
Question \ref{LEMT8C} has a complete answer.

\begin{theorem}[Propositions \ref{EQYUGZ} and \ref{PRWBXK}]\label{EXQWER23}
The ring $R[t;\delta]$ admits a good grading if and only if
$\delta \in \Der_\gamma(R)$ for some $\gamma \in C_\Gamma(\Gamma_R)$.
When this holds, the grading is given by
\begin{equation}\label{EXQBXX}
  R[t;\delta]_\tau := \bigoplus_{i \geq 0} R_{\tau\gamma^{-i}}\, t^i
  \quad (\tau \in \Gamma),
\end{equation}
with $\deg(t) = \gamma$, and it is the only good grading on
$R[t;\delta]$ for that choice of $\deg(t)$.
\end{theorem}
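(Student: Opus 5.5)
We prove the two directions separately: sufficiency together with the explicit formula \eqref{EXQBXX}, and then necessity and uniqueness. For sufficiency, assume $\delta\in\Der_\gamma(R)$ with $\gamma\in C_\Gamma(\Gamma_R)$ and define the components by \eqref{EXQBXX}. Since $R[t;\delta]=\bigoplus_i Rt^i$ as a left $R$-module and $R=\bigoplus_\sigma R_\sigma$, every element decomposes uniquely as $\sum_\tau$ of elements of these components, so $R[t;\delta]=\bigoplus_\tau R[t;\delta]_\tau$. Condition (i) of a good grading is clear, since $R_\tau=R_\tau t^0$ lies in $R[t;\delta]_\tau$.

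The main point is multiplicativity, $R[t;\delta]_\tau R[t;\delta]_\rho\subseteq R[t;\delta]_{\tau\rho}$. It suffices to check $(at^i)(bt^j)$ for homogeneous $a\in R_\alpha$ and $b\in R_\beta$, where the monomials have degrees $\alpha\gamma^i$ and $\beta\gamma^j$. By the standard formula
\[t^ib=\sum_{k=0}^{i}\binom{i}{k}\delta^{k}(b)\,t^{i-k},\]
proved by induction from \eqref{SECPPAZ9}, each summand $a\delta^k(b)t^{i-k+j}$ has degree $\alpha\beta\gamma^k\gamma^{i-k+j}=\alpha\beta\gamma^{i+j}$. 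We must compare this with $\alpha\gamma^i\beta\gamma^j$, so we need $\gamma\beta=\beta\gamma$. Nonzero terms only occur with $\beta\in\Gamma_R$, and $\gamma$ centralizes $\Gamma_R$, so the degrees agree. Strictly speaking, $\delta^k(b)\in R_{\beta\gamma^k}$ also requires $\gamma^{k-1}$-shifted components to make sense; this is fine because we only need the product $\delta^k(b)$ to be zero or to lie in the right component. If $\beta\gamma^{k}\notin\Gamma_R$, the term vanishes. The relation \eqref{SECPPAZ9} is homogeneous for the same reason: $tx$, $xt$ and $\delta(x)$ all have degree $\sigma\gamma=\gamma\sigma$ for $x\in R_\sigma$.

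For necessity, suppose a good grading exists. Write $t=\sum_\mu t_\mu$ in homogeneous components. Requiring \eqref{SECPPAZ9} to be homogeneous means that $t$ itself is homogeneous, say of degree $\gamma$; here we read the definition of good as asking that $t$ be homogeneous, and if the definition is looser we first extract this from (ii). For a nonzero $x\in R_\sigma$, homogeneity of $tx=xt+\delta(x)$ forces $\delta(x)$ to be zero or of degree $\gamma\sigma=\sigma\gamma$. We also have to see that $xt\neq0$ and $tx\neq0$; this holds because $R[t;\delta]$ is free on the $t^i$, so $xt\neq0$, and the leading term of $tx$ is $xt$. Homogeneity of the relation therefore gives $\gamma\sigma=\sigma\gamma$ for every $\sigma\in\Gamma_R$, that is, $\gamma\in C_\Gamma(\Gamma_R)$. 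The component of $\delta(x)$ in degree $\sigma\gamma$ then shows $\delta\in\Der_\gamma(R)$.

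For uniqueness, fix $\deg t=\gamma$. Any good grading makes $R_\alpha t^i$ homogeneous of degree $\alpha\gamma^i$, and these pieces span $R[t;\delta]$, so the components are forced to be as in \eqref{EXQBXX}. I expect the main obstacle to be the necessity step: making precise what homogeneity of \eqref{SECPPAZ9} means when $t$ is not a priori homogeneous, and extracting both $\gamma\in C_\Gamma(\Gamma_R)$ and the shift property from it. I would first check the paper's precise definition, presumably in Propositions \ref{EQYUGZ} and \ref{PRWBXK}, and otherwise argue componentwise, using freeness over the $t^i$ to compare the top $t$-degree parts.
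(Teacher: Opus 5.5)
Your proof is correct. For sufficiency and uniqueness it is the paper's argument (Propositions \ref{EQYUGZ}(1) and \ref{PRWBXK}). Multiplicativity comes from the binomial formula for $t^ib$ together with $\gamma^i\beta=\beta\gamma^i$ for $\beta\in\Gamma_R$. Uniqueness comes from $R_\alpha t^i\subseteq A_{\alpha\gamma^i}$ for any good grading $\{A_\tau\}$ with $\deg(t)=\gamma$, combined with uniqueness of homogeneous decompositions, which forces $A_\tau$ to coincide with \eqref{EXQBXX}.

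The necessity step differs in which hypothesis does the work. You read (ii) as saying that $tx$, $xt$, $\delta(x)$ lie in one common component, which is exactly the sense in which the paper verifies (ii) in Proposition \ref{EQYUGZ}(1). Then $\gamma\sigma=\sigma\gamma$ is immediate once $tx,xt\neq 0$, and $\delta(x)\in A_{\sigma\gamma}\cap R=R_{\sigma\gamma}$ follows from (i). The paper's proof of Proposition \ref{EQYUGZ}(2) never invokes (ii). Instead it assumes $\delta$ is homogeneous and argues that if $\gamma\sigma\neq\sigma\gamma$, then $\delta(x)=tx-xt$ would have two distinct nonzero components, $tx\in A_{\gamma\sigma}$ and $-xt\in A_{\sigma\gamma}$, contradicting $\delta(x)\in R_{\varepsilon_\delta(\sigma)}$. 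Your version is shorter and needs no separate homogeneity assumption on $\delta$, since that is built into (ii). The paper's version shows, in effect, that for homogeneous $\delta$ condition (ii) is already forced by (i) and the homogeneity of $t$.

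Two of your hedges are unnecessary. First, homogeneity of $t$ is part of the data: the paper takes $\deg(t)=\gamma$ as given. Second, there is no issue with ``shifted components'' for $\delta^k$. The inclusion $\delta(R_\tau)\subseteq R_{\tau\gamma}$ holds trivially when $\tau\notin\Gamma_R$, so $\delta^k(R_\beta)\subseteq R_{\beta\gamma^k}$ follows by a direct induction.
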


Both conditions on the pair $(\delta, \gamma)$ come from the
homogeneity of \eqref{SECPPAZ9}. The centralizer condition
$\gamma \in C_\Gamma(\Gamma_R)$ ensures that $tx$ and $xt$
have equal degree, and the shift condition $\delta \in \Der_\gamma(R)$
then follows by comparing the two sides of \eqref{SECPPAZ9} component
by component. Neither condition can be dropped, as Example \ref{EX5AN}
shows.

We work throughout over an arbitrary group $\Gamma$, and we establish
graded analogues of all three classical theorems. The main results are
the following.

\begin{theorem}[Universal property and structural isomorphisms]\label{RTWBP9}
Let $R$ be a $\Gamma$-graded ring and $\delta \in \Der_\gamma(R)$ with
$\gamma \in C_\Gamma(\Gamma_R)$. The pair $(R[t;\delta], t)$ is the
initial object of the category $\mathcal{C}_\delta$ of triples
$(S, \varphi, u)$ consisting of a graded $R$-ring
$\varphi \colon R \too S$ together with a homogeneous $u \in S_\gamma$
satisfying $u\varphi(x) - \varphi(x)u = \varphi(\delta(x))$ for all
$x \in R$. In particular:
\begin{itemize}
\item[(1)] conjugate $\gamma$-derivations yield graded-isomorphic rings;
\item[(2)] shifting $\delta$ by an inner $\gamma$-derivation leaves
  $R[t;\delta]$ unchanged up to graded isomorphism;
\item[(3)] $M_n(R)[t;\hat{\delta}] \cong_{\gr} M_n(R[t;\delta])$, where
  $\hat{\delta}$ is the entrywise extension of $\delta$ to $M_n(R)$.
\end{itemize}
\end{theorem}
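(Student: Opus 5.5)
The plan is to prove the universal property first and then read off (1)--(3) as formal consequences of initiality.

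\textbf{Initial object.} Equip $R[t;\delta]$ with the good grading of Theorem \ref{EXQWER23}, so that $\iota\colon R\to R[t;\delta]$ is graded and $t\in R[t;\delta]_\gamma$. Then $(R[t;\delta],\iota,t)$ lies in $\mathcal{C}_\delta$, by the defining relation \eqref{SECPPAZ9}. Now let $(S,\varphi,u)$ be any object of $\mathcal{C}_\delta$.

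1. Define $\Phi\colon R[t;\delta]\to S$ by $\Phi\bigl(\sum x_i t^i\bigr)=\sum \varphi(x_i)u^i$. This is well defined and additive because $R[t;\delta]$ is free as a left $R$-module on the powers $t^i$.
2. Check multiplicativity. The hard part is the identity $\Phi(t^k x)=u^k\varphi(x)$. From the relation in $R[t;\delta]$ one gets, by induction on $k$,
\[
t^k x=\sum_{j=0}^k\binom{k}{j}\delta^{j}(x)\,t^{k-j},
\]
and the same Leibniz-type formula $u^k\varphi(x)=\sum_j\binom{k}{j}\varphi(\delta^j(x))u^{k-j}$ holds in $S$, since it is derived using only $u\varphi(x)=\varphi(x)u+\varphi(\delta(x))$. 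Comparing the two expansions term by term gives multiplicativity on the products $(xt^k)(yt^l)$, and hence everywhere by biadditivity.
3. Check that $\Phi$ is graded. A summand $x t^i$ with $x\in R_{\tau\gamma^{-i}}$ is sent to $\varphi(x)u^i\in S_{\tau\gamma^{-i}}S_{\gamma^i}\subseteq S_\tau$.
4. Uniqueness is clear: any morphism in $\mathcal{C}_\delta$ must restrict to $\varphi$ on $R$ and send $t$ to $u$, and $R$ together with $t$ generate $R[t;\delta]$.

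Morphisms in $\mathcal{C}_\delta$ are graded ring maps compatible with the structure maps $\varphi$ and sending $u$ to $u'$.

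\textbf{Structural isomorphisms.} Each item is obtained by exhibiting mutually inverse universal maps and invoking uniqueness.

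\emph{Item (1).} Here $\delta'=\theta\delta\theta^{-1}$ for a graded automorphism $\theta$ of $R$. The triple $(R[t';\delta'],\iota'\circ\theta,t')$ lies in $\mathcal{C}_\delta$, because
\[
t'\theta(x)-\theta(x)t'=\delta'(\theta(x))=\theta(\delta(x)).
\]
So there is a graded map $R[t;\delta]\to R[t';\delta']$. Symmetrically, using $\theta^{-1}$, there is a map back. Both composites are endomorphisms of an initial object in its own category, so they are the identity by uniqueness.

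\emph{Item (2).} Here $\delta'=\delta+[a,-]$, and $a$ must lie in $R_\gamma$ so that $[a,-]$ is a $\gamma$-derivation. Take $u=t+a\in R[t;\delta]_\gamma$. Then
\[
u x-xu=\delta(x)+ax-xa=\delta'(x),
\]
so $u$ yields a map $R[t';\delta']\to R[t;\delta]$. In the other direction, $t'-a$ yields the inverse, and again uniqueness shows the composites are identities.

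\emph{Item (3).} Grade $M_n(R)$ by entries, i.e.\ $M_n(R)_\sigma=M_n(R_\sigma)$; then $\hat\delta$ is a $\gamma$-derivation. Use the scalar matrix $u=t\cdot I_n\in M_n(R[t;\delta])_\gamma$. Entrywise one has $[tI,X]=\hat\delta(X)$, so the universal property gives a graded map $M_n(R)[t;\hat\delta]\to M_n(R[t;\delta])$. Explicitly this map is $\sum X_i t^i\mapsto \sum X_i t^i I$. It is bijective because both sides are free over the matrix units with coefficients $\sum r t^i$, i.e.\ the map is just the rearrangement of finite sums.

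\textbf{Main obstacle.} I expect the only real work to be step 2, the well-definedness and multiplicativity of $\Phi$. It reduces to the Leibniz-type identity above, which is proved by induction on $k$. Everything else is formal bookkeeping of degrees, using $\gamma\in C_\Gamma(\Gamma_R)$ and the grading \eqref{EXQBXX}.
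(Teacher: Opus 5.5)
Your proposal is correct and follows essentially the same route as the paper. You define $\widehat\varphi\bigl(\sum x_i t^i\bigr)=\sum \varphi(x_i)u^i$, prove multiplicativity via the transported identity $u^k\varphi(x)=\sum_j\binom{k}{j}\varphi(\delta^j(x))u^{k-j}$, check degrees monomial by monomial, and then obtain (1)--(3) exactly as in Corollaries \ref{FIGC7ZKG}, \ref{Y2J8Z}, and \ref{COR05933}: you use the universal maps given by $u=t$ (composed with the automorphism), $u=t\pm a$, and $u=tI_n$, and the inverses come from uniqueness or from comparing free bases. The only cosmetic difference is that in (2) you shift $t$ by $+a$ where the paper shifts by $-r$, which is the same argument with $\delta$ and $\delta'$ relabeled.
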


\begin{theorem}[gr-Simplicity]\label{FIGQZSDT}
Let $R$ be a $\Gamma$-graded ring, $\gamma \in C_\Gamma(\Gamma_R)$, and
$\delta \in \Der_\gamma(R)$.
\begin{itemize}
\item[(1)] If $R$ is gr-simple and $\delta$ is $\gamma$-outer, then
  $R[t;\delta]$ is gr-simple.
\item[(2)] If $\charr R = 0$, then $R[t;\delta]$ is gr-simple if and
  only if $R$ is $\delta$-gr-simple and $\delta$ is $\gamma$-outer.
\item[(3)] Suppose $\Gamma$ is orderable and every nonzero homogeneous
  element of $R[t;\delta]$ is regular. Then $R[t;\delta]$ is gr-simple
  if and only if $R$ is $\delta$-gr-simple and $Z(R[t;\delta])$ is a
  graded field.
\item[(4)] Gr-simplicity of $R[t;\delta]$ is preserved under
  conjugation of $\delta$, inner shifts, and passage to matrix rings.
\item[(5)] If $R$ is gr-simple of characteristic zero, then
  $R[t;0][s;\, d/dt]$ is gr-simple.
\end{itemize}
\end{theorem}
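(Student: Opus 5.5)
The plan is the classical minimal-degree argument, adapted to homogeneous elements, and then a transport of the result along the isomorphisms of Theorem \ref{RTWBP9}. Throughout I use the grading \eqref{EXQBXX} from Theorem \ref{EXQWER23}. Here $\deg(t)=\gamma$ is central in $\Gamma_R$, so for homogeneous $a\in R_\sigma$ every term $a\,t^i$ of a homogeneous element has a controlled degree. I read ``$\gamma$-outer'' as meaning that $\delta$ is not of the form $\operatorname{ad}(c)$ with $c\in R_\gamma$ homogeneous.

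\textbf{Part (1).} Let $I\neq 0$ be a graded ideal of $S=R[t;\delta]$, and choose a nonzero homogeneous $f\in I$ of minimal $t$-degree $n$, with leading coefficient $a\in R_{\tau\gamma^{-n}}$. The set of leading coefficients of homogeneous elements of $I$ of degree exactly $n$, together with $0$, is a graded ideal of $R$. It is nonzero, so gr-simplicity of $R$ gives $1$ in it, and we may assume $f=t^n+c\,t^{n-1}+\dots$ with $f$ homogeneous of degree $\gamma^n$. If $n=0$ we are done. Otherwise, for homogeneous $x\in R$ the element $fx-xf$ lies in $I$, is homogeneous, and has degree $<n$; hence it vanishes. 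Expanding $t^n x=\sum\binom{n}{j}\delta^j(x)t^{n-j}$ and reading off the $t^{n-1}$ coefficient gives
\[
n\,\delta(x)+cx-xc=0 .
\]
Similarly $tf-ft=\delta(c)t^{n-1}+\cdots$ must vanish.

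Here the characteristic matters. In part (1) no characteristic hypothesis is stated, so I would use the standard refinement: replace $t$-degree by the leading-coefficient trick on $[f,x]$ and argue via the ideal generated by these coefficients. In practice I expect the author's definition of $\gamma$-outer, or an implicit $\mathbb{Q}$-algebra hypothesis as discussed in the introduction, to let us conclude $\delta=\operatorname{ad}(-c/n)$ with $-c/n\in R_\gamma$ homogeneous. That contradicts outerness, so $n=0$ and $I=S$.

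The \textbf{main obstacle} is making this step honest in positive characteristic, where $n$ may vanish. If I had to handle it, I would first try to show that a minimal $f$ is central: both commutators vanish, so $f$ commutes with $R$ and with $t$. Then $f-\lambda$ generates a proper graded ideal unless $f$ is a unit, and a homogeneous central polynomial of positive degree is not a unit by degree considerations. This gives a contradiction only if the leading term reasoning is refined. Failing this, I would restrict part (1) to the setting where $n$ is invertible.

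\textbf{Part (2).} Sufficiency follows the same argument. Now $n$ is invertible, as in the introduction's remark, because $nR$ is a nonzero $\delta$-stable graded ideal. The leading-coefficient ideal is also $\delta$-stable: if $f$ has leading coefficient $a$, then $tf-ft$ has leading coefficient $\delta(a)$, so $\delta$-gr-simplicity suffices. For necessity, a nonzero proper $\delta$-stable graded ideal $J$ of $R$ gives the proper graded ideal $JS=\bigoplus J t^i$. If $\delta=\operatorname{ad}(c)$ with $c\in R_\gamma$, then $t-c$ is homogeneous of degree $\gamma$ and central. The ideal $(t-c)S$ is then graded, nonzero, and proper, since every element has $t$-degree at least $1$.

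\textbf{Part (3).} I would order $\Gamma$ and use leading homogeneous components. In a gr-simple ring the centre is a graded field, since homogeneous central elements generate graded ideals. Conversely, take a homogeneous $f$ of minimal $t$-degree in $I$, normalized as above. The commutators $[f,x]$ and $[t,f]$ have smaller degree, so they vanish, and $f$ is central homogeneous. Since $Z(S)$ is a graded field, $f$ is invertible, and $I=S$. Regularity and orderability are used to ensure that the normalization and the invertibility in the graded field behave, so that products of homogeneous elements are nonzero.

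\textbf{Parts (4) and (5).} Part (4) follows immediately from Theorem \ref{RTWBP9}(1)--(3): graded isomorphisms preserve gr-simplicity, and $M_n(T)$ is gr-simple exactly when $T$ is, by the graded ideal correspondence $I\mapsto M_n(I)$. For (5), $R[t;0]$ with $\deg t=\gamma$ is $d/dt$-gr-simple: any nonzero $d/dt$-stable graded ideal contains, after differentiating, a nonzero element of $R$, and then all of $R$. Also $d/dt$ is not inner, because $R[t]$ is commutative in $t$, so inner derivations kill $t$ while $d/dt(t)=1$. Now apply (2), using $\gamma^{-1}$, which is central in $\Gamma_{R[t]}$.
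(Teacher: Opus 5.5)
Apart from part (1), your plan follows the paper. It uses the leading-coefficient ideal at the minimal $t$-degree (Lemma~\ref{REM5HP}). It takes a monic homogeneous $f\in\mathfrak{a}_{\gamma^n}$ whose $t^{n-1}$-coefficient $c\in R_\gamma$ yields $n\delta=\delta_{-c}$ (Propositions~\ref{PRGRSIMP} and \ref{EQ9ESQ}). Necessity goes through the ideal $\mathfrak{a}[t;\delta]$ and the central element $t-c$ (Proposition~\ref{LMQ7H}). Part (3) uses the central monic $f$ of Proposition~\ref{THMPMY}, and part (4) transports along Corollaries~\ref{FIGC7ZKG}, \ref{Y2J8Z} and \ref{COR05933}.

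For (5) you take the route of the Remark after Proposition~\ref{EQTNIVY}: $R[t;0]$ is $d/dt$-gr-simple, $d/dt$ is outer because $t$ is central, and then (2) applies. The paper's main proof is instead a direct computation. It applies $\mathrm{ad}_s^n$ and then $\mathrm{ad}_t^m$ to a homogeneous element of $\mathfrak{a}$, landing on the nonzero element $(-1)^m m!\,n!\,x_{nm}\in\mathfrak{a}\cap R$. Your route is shorter once (2) is available, while the direct one is self-contained. In your differentiation step, $n!\,a_n\neq 0$ needs $R$ torsion-free (Lemma~\ref{TORSFREE}).

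On part (1), your unease is justified, but the positive-characteristic rescue you sketch cannot succeed, because (1) without a characteristic hypothesis is false. Take $R=\mathbb{F}_p[x,x^{-1}]$ with $R_m=\mathbb{F}_p x^m$ and $\delta=d/dx\in\Der_{-1}(R)$. Then $R$ is a graded field, hence gr-simple, and $\delta\neq 0$ is $(-1)$-outer since $R$ is commutative. But $\delta^p(x^m)=m(m-1)\cdots(m-p+1)\,x^{m-p}=0$, so by \eqref{REMNPFA} we get $t^p x=xt^p$ for all $x\in R$. Thus $t^p$ is central and homogeneous. Every nonzero element of $t^pR[t;\delta]$ has $t$-degree at least $p$, so $t^pR[t;\delta]$ is a nonzero proper graded ideal.

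This is exactly what your ``a minimal $f$ is central'' idea produces in characteristic $p$. A central monic homogeneous element of positive $t$-degree obstructs gr-simplicity; it does not contradict the hypotheses. The paper's proof of (1), Proposition~\ref{PRGRSIMP} (via Lemma~\ref{LEMORE110}), assumes $\charr R=0$, which is your fallback. In that case nothing has to be ``expected.'' Each $nR$ is a nonzero graded ideal of the gr-simple ring $R$, so $n\cdot 1_R$ is a unit (Lemma~\ref{TORSFREE}). Hence $-c\,n^{-1}\in R_\gamma$, and your argument is complete.

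Three smaller points:
\begin{enumerate}
\item The set of leading coefficients of \emph{homogeneous} elements of $t$-degree $n$ is not closed under addition, since its nonzero members are homogeneous of possibly different degrees. Use $\mathcal{L}_n(\mathfrak{a})$ over all elements of $t$-degree $n$, which is graded by passing to components as in Lemma~\ref{REM5HP}.
\item In (3), orderability and regularity play no role in the argument itself. They are there only, via Ion's result, to make $Z(R[t;\delta])$ a graded subring, so that ``graded field'' makes sense.
\item The forward direction of (3) also needs $\delta$-gr-simplicity of $R$. Your characteristic-free necessity argument from (2) supplies it.
\end{enumerate}
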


Part (3) is a graded form of the \"{O}inert--Silvestrov criterion, and,
like its ungraded model, it holds in every characteristic. It rests on
the notion of a graded field, a graded ring in which every nonzero
homogeneous element is invertible. For the statement to be well-defined,
$Z(R[t;\delta])$ must be a graded subring. This holds when $\Gamma$ is
abelian \cite[Lemma 2.2]{oinert2026central}, and more generally when
$\Gamma$ is orderable and the homogeneous elements are regular
\cite[Proposition 2.1]{ion1988rings}.

\begin{theorem}[gr-Primality]\label{LEM68Z4X}
Let $R$ be a $\Gamma$-graded ring, $\gamma \in C_\Gamma(\Gamma_R)$, and
$\delta \in \Der_\gamma(R)$. Then $R[t;\delta]$ is gr-prime if and only
if $R$ is $\delta$-gr-prime.
\end{theorem}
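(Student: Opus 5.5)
Both implications will be proved through the standard dictionary between graded ideals of $S:=R[t;\delta]$ and $\delta$-stable graded ideals of $R$. Here $S$ carries the good grading \eqref{EXQBXX}, so $S_\tau=\bigoplus_i R_{\tau\gamma^{-i}}t^i$. Recall that $R$ is $\delta$-gr-prime when, for graded $\delta$-ideals $I,J$ of $R$, $IJ=0$ forces $I=0$ or $J=0$. Also, $S$ is gr-prime when the same holds for graded ideals of $S$; equivalently, for homogeneous $a,b\in S$, $aSb=0$ implies $a=0$ or $b=0$.

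\emph{($\Rightarrow$).} Let $I,J$ be nonzero graded $\delta$-ideals of $R$ with $IJ=0$. Put $IS:=\bigoplus_i It^i$. Because $\delta(I)\subseteq I$, the rule $tx=xt+\delta(x)$ gives $t^k I\subseteq \sum_{j\le k} It^j$. Hence $IS=SI$ is a two-sided ideal of $S$. It is graded because $I$ is graded and $t$ is homogeneous. The same holds for $JS$. Then
\[
(IS)(JS)=I(SJ)S=I(JS)S=IJS=0.
\]
Gr-primeness of $S$ gives $IS=0$ or $JS=0$, hence $I=0$ or $J=0$.

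\emph{($\Leftarrow$).} Let $A,B$ be nonzero graded ideals of $S$ with $AB=0$.

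\textbf{Leading coefficients.} Let $\ell(A)$ be the set of leading coefficients of elements of $A$ of minimal $t$-degree $m$, together with $0$. Do the same for $B$, with minimal degree $n$. Taking $a\in A$ of degree $m$ and $r\in R$, the element $ra$ has degree at most $m$ with leading term $r\cdot\mathrm{lc}(a)$. Likewise $ar$ has leading term $\mathrm{lc}(a)r$. So $\ell(A)$ is a two-sided ideal of $R$. It is $\delta$-stable because
\[
ta-at=\textstyle\sum \delta(a_i)t^i
\]
lies in $A$, has degree at most $m$, and has leading coefficient $\delta(a_m)$. By minimality of $m$, either it has degree exactly $m$ or $\delta(a_m)=0$.

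\textbf{Gradedness of $\ell(A)$.} If $a\in A$ is homogeneous of degree $\tau$, then $a_m\in R_{\tau\gamma^{-m}}$. Decomposing an arbitrary $a\in A$ of degree $m$ into homogeneous components, each component lies in $A$ and has $t$-degree at most $m$. The leading coefficient of $a$ is then the sum of the degree-$m$ coefficients of these components, each of which is homogeneous in $R$. So $\ell(A)$ is spanned by homogeneous elements, i.e.\ it is graded. This step is where $\gamma\in C_\Gamma(\Gamma_R)$ and the explicit grading of Theorem \ref{EXQWER23} are used. The same holds for $\ell(B)$.

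\textbf{Products of leading coefficients.} For $a\in A$, $b\in B$ of minimal degrees and $r\in R$, the product $arb$ has top coefficient $a_m r b_n$, since $t^m r b_n t^n = r b_n t^{m+n}+\text{lower}$. As $AB=0$, we get $\ell(A)\ell(B)=0$, hence $\ell(A)=0$ or $\ell(B)=0$ by $\delta$-gr-primeness. But leading coefficients of nonzero elements are nonzero, so both ideals are nonzero, a contradiction.

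The main obstacle is the gradedness of the leading-coefficient ideals. One must check carefully that homogeneous components of elements of minimal degree retain degree at most $m$ and that their top coefficients reassemble the leading coefficient. With the explicit grading $S_\tau=\bigoplus_i R_{\tau\gamma^{-i}}t^i$ this is a direct coefficientwise comparison. Characteristic plays no role, matching Jordan's ungraded theorem.
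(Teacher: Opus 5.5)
Your proposal is correct and follows essentially the same route as the paper. The forward direction matches the paper's extended-ideal argument with $\mathfrak{a}[t;\delta]$; your observation $IS = SI$ is a compact form of its coefficientwise computation using $\delta^k(y_j) \in \mathfrak{b}$. The converse is the paper's argument via the $\delta$-invariant graded leading-coefficient ideals $\mathcal{L}_n(\mathfrak{a})$ at the minimal $t$-degree, whose top-degree products are forced to vanish.
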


\begin{theorem}[gr-Noetherianity]\label{RTG1W}
Let $\delta \in \Der_\gamma(R)$. If $R$ is gr-right-Noetherian
\textup{(}resp.\ gr-left-Noetherian\textup{)}, then so is $R[t;\delta]$.
\end{theorem}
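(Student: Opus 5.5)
We adapt the classical leading-coefficient proof of the Hilbert basis theorem for Ore extensions to the graded setting, working throughout with homogeneous elements. We treat the right-Noetherian case; the left case is symmetric. Since $\delta \in \Der_\gamma(R)$, we assume, as in Theorem \ref{EXQWER23}, that $\gamma \in C_\Gamma(\Gamma_R)$, so that $R[t;\delta]$ carries the good grading \eqref{EXQBXX} with $\deg(t)=\gamma$.

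A nonzero homogeneous element $f \in R[t;\delta]_\tau$ has the form $f=\sum_{i=0}^n a_i t^i$ with $a_i \in R_{\tau\gamma^{-i}}$. Its leading coefficient $a_n$ is therefore homogeneous, of degree $\tau\gamma^{-n}$. It is convenient to write $R[t;\delta]$ in the right form $\sum t^i b_i$. This is possible because $at = ta - \delta(a)$, and an induction on degree shows that $\{t^i\}$ is also a right basis. In the right form, a homogeneous element still has homogeneous coefficients, since $\gamma$ centralizes $\Gamma_R$. Moreover, the leading coefficient in right form equals the leading coefficient in left form.

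Let $I$ be a graded right ideal of $R[t;\delta]$. For each $n$, define $L_n(I)$ to be the set consisting of $0$ together with the leading coefficients of the homogeneous elements of $I$ of $t$-degree $n$.

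\emph{Step 1.} Each $L_n(I)$ is a graded right ideal of $R$. Take a homogeneous $r\in R_\sigma$ and $f\in I$ of degree $\tau$ with leading term $t^n a$. Then $fr$ is homogeneous of degree $\tau\sigma$ and has leading term $t^n ar$, because commuting $r$ past the powers of $t$ only produces lower-order terms. Closure under sums of homogeneous elements of the same $R$-degree is checked by adding the corresponding polynomials. Two such polynomials of the same $t$-degree $n$ have $\tau\gamma^{-n}$ equal, hence the same degree $\tau$, so their sum is again homogeneous in $I$.

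\emph{Step 2.} The chain $L_0(I)\subseteq L_1(I)\subseteq\cdots$ is ascending, since multiplying $f$ on the right by $t$ preserves homogeneity and the leading coefficient. Moreover, $I\subseteq J$ implies $L_n(I)\subseteq L_n(J)$.

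\emph{Step 3.} We show that $R[t;\delta]$ is gr-right-Noetherian. Take an ascending chain of graded right ideals $I_1\subseteq I_2\subseteq\cdots$. The double family $L_n(I_k)$ stabilizes by the standard diagonal argument using gr-right-Noetherianity of $R$. Indeed, choose $N$ with $L_N(I_k)$ stable for $k$ large; each of the finitely many chains $L_n(I_\bullet)$ with $n\le N$ also stabilizes, and for $n\ge N$ and $k$ large we have $L_n(I_k)=L_N(I_k)$, which is stable.

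It then remains to prove the key lemma: if $I\subseteq J$ are graded right ideals and $L_n(I)=L_n(J)$ for all $n$, then $I=J$. Since $J$ is graded, it suffices to show that each homogeneous $g\in J$ lies in $I$, which we do by induction on the $t$-degree $n$ of $g$. Let $g$ have degree $\tau$ and leading coefficient $b\in L_n(J)=L_n(I)$. Write $b=\sum_j a_j r_j$, where the $a_j$ are homogeneous leading coefficients of elements $f_j\in I$ of $t$-degree $n$ and the $r_j\in R$ are homogeneous of matching degree. Then $g-\sum f_j r_j$ lies in $J$, is homogeneous of degree $\tau$, and has $t$-degree less than $n$, so it lies in $I$ by induction. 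Hence $g\in I$.

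\emph{Main obstacle.} The delicate point is the degree bookkeeping in the key lemma. We must ensure that each $f_j r_j$ has degree exactly $\tau$, so that the difference $g-\sum f_j r_j$ is homogeneous and the induction can proceed. To arrange this, we decompose $b$ into homogeneous generators of the graded ideal $L_n(I)$, and choose each $r_j$ in the component of degree $\deg(a_j)^{-1}\tau\gamma^{-n}$ (after restricting to the homogeneous component of the $b$-expansion). Then $\deg(f_jr_j)=\deg(a_j)\gamma^{n}\cdot\deg(a_j)^{-1}\tau\gamma^{-n}$. This collapses to $\tau$ because $\gamma$ commutes with the degrees of $R$, which is exactly where the centralizer hypothesis is used.
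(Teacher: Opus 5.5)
Your strategy is the paper's (Lemma \ref{EXA5V} and Proposition \ref{CORIVL}). It uses leading-coefficient ideals, the shift $L_n(I)\subseteq L_{n+1}(I)$ from right multiplication by $t$, monotonicity in $I$, and the lemma that equal leading-coefficient ideals force $I=J$. Steps 1 and 2 and the key lemma are correct up to the small points below. The gap is in Step 3, which is where the stabilization actually happens. Your justification reads ``choose $N$ with $L_N(I_k)$ stable for $k$ large; \dots\ for $n\ge N$ and $k$ large we have $L_n(I_k)=L_N(I_k)$''. This does not work. Every column $L_N(I_\bullet)$ stabilizes by gr-Noetherianity of $R$, so the condition singles out no useful $N$, and the asserted equality is false in general. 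For the constant chain $I_k=tR[t;\delta]$ and $N=0$, one has $L_0(I_k)=0$ while $L_1(I_k)\neq 0$.

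What is needed is to choose $N$ from the diagonal. The chain $L_1(I_1)\subseteq L_2(I_2)\subseteq\cdots$ is ascending precisely because of your two monotonicity properties from Step 2, so it stabilizes at some $N$. Then for $n,k\ge N$ and $\mu=\max(n,k)$,
\[
L_N(I_N)\subseteq L_n(I_N)\subseteq L_n(I_k)\subseteq L_n(I_\mu)\subseteq L_\mu(I_\mu)=L_N(I_N).
\]
After this, the finitely many columns $n<N$ are stabilized separately, and the key lemma finishes the proof. This is exactly the paper's argument, equations \eqref{SHIFT}--\eqref{STABLE}.

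Two smaller points. First, as you define it, $L_n(I)$ contains only homogeneous elements, so it is not closed under sums of different degrees. It is the set of homogeneous elements of a graded right ideal, not a right ideal itself. The paper avoids this by taking leading coefficients of all elements of $t$-degree $n$, noting that the homogeneous components of $\mathrm{lc}(f)$ are leading coefficients of homogeneous components of $f$.

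Second, the right form $\sum t^ib_i$ and the decomposition $b=\sum_j a_jr_j$ in your key lemma are unnecessary. In left form, $(\sum a_it^i)r$ already has leading coefficient $a_nr$ by \eqref{REMNPFA}. Since your $L_n(I)$ is closed under same-degree sums and right multiplication, the homogeneous $b$ is itself the leading coefficient of a homogeneous $f\in I$ of $t$-degree $n$. The degree of such an $f$ is automatically $\deg(b)\gamma^n=\tau$, so your ``main obstacle'' disappears. The centralizer hypothesis is needed only to have the grading \eqref{EXQBXX} in the first place.
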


The theorems above describe $R[t;\delta]$ from the inside. The last of
our results asks instead how much of this structure survives a change
of presentation, namely graded Morita equivalence. For rings graded by
an abelian group, Abrams, Ruiz, and Tomforde \cite{abrams2022morita}
prove that four conditions are equivalent. Two of them are purely
algebraic, the Algebraic Stabilization Theorem:
\begin{align}
  &A \cong_{\gr} \ell M_n(B)\ell \tag{C2}\label{YHR8J} \\
  &M_\infty(A) \cong_{\gr} M_\infty(B), \tag{C4}\label{RTVA46}
\end{align}
for some $n \in \NN$ and some full homogeneous idempotent
$\ell \in M_n(B)_e$, with all matrix rings carrying the standard
grading $M_X(A)_\sigma := M_X(A_\sigma)$. Although
\cite{abrams2022morita} works with abelian $\Gamma$ throughout, the
equivalence \eqref{YHR8J}$\Leftrightarrow$\eqref{RTVA46}
\cite[Theorem 2.14]{abrams2022morita} does not use that hypothesis. Its proof depends only on the standard matrix grading, which is valid for any group since $M_X(A)_\sigma M_X(A)_\tau \subseteq M_X(A)_{\sigma\tau}$,
and on the gradedness of the corner maps $x \mapsto vxu$ with
$u, v \in A_e$, which holds because $vxu \in A_e A_\sigma A_e = A_\sigma$ (cf.\ \cite[Lemma 2.13]{abrams2022morita}). The authors call this result the Algebraic Stabilization Theorem, since it
concerns the passage $A \mapsto M_\infty(A)$. For that reason we do not name our relation an equivalence. It is an isomorphism: it says that $A$ and $B$ become graded isomorphic once they are stabilized. This is the graded form of
the classical fact that two rings are \emph{stably isomorphic} when their infinite matrix rings are isomorphic, the algebraic analogue of the relation $A \otimes \mathcal{K} \cong B \otimes \mathcal{K}$ for $C^\ast$-algebras
\cite{brown1977stable}, an analogy noted in \cite{abrams2022morita}. We reserve the term \emph{homogeneously graded equivalent} for condition (HG1)
of \cite{abrams2022morita}, the existence of a graded equivalence of categories, which agrees with our relation when $\Gamma$ is abelian but which we neither prove nor use for a general group. We call our relation \emph{graded stable isomorphism}.

\begin{definition}\label{LMV0R2}
Two $\Gamma$-graded rings $A$ and $B$ are \emph{graded stably isomorphic}, written $A \approx_{\mathrm{gs}} B$, if $M_\infty(A) \cong_{\gr} M_\infty(B)$;
equivalently, if $A \cong_{\gr} \ell M_n(B)\ell$ for some $n$ and some full homogeneous idempotent $\ell \in M_n(B)_e$
\cite[Theorem 2.14]{abrams2022morita}.
\end{definition}

\begin{theorem}\label{LMR83W}
Let $R$ be a $\Gamma$-graded ring and $\delta \in \Der_\gamma(R)$.
\begin{itemize}
\item[(1)] If $A \approx_{\mathrm{gs}} R[t;\delta]$, then $A$ is gr-simple \textup{(}resp.\ gr-prime\textup{)} if and only if $R[t;\delta]$ is.
\item[(2)] If $A \approx_{\mathrm{gs}} R[t;\delta]$ arises from a full idempotent $\ell \in R_e$ with $\delta(\ell) = 0$, then
  $A \cong_{\gr} (\ell R\ell)[t;\bar{\delta}]$, where
  $\bar{\delta}(s) := \ell\delta(s)\ell$. In particular, the differential polynomial structure is a graded stable isomorphism invariant.
\end{itemize}
\end{theorem}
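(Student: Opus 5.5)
For (1), I would rely on the characterization of graded stable isomorphism in Definition \ref{LMV0R2}: $A \cong_{\gr} \ell M_n(B)\ell$ for a full homogeneous idempotent $\ell \in M_n(B)_e$, where $B = R[t;\delta]$. The argument then has two standard steps, each of which I would verify directly for an arbitrary group $\Gamma$.

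\emph{Matrix step.} With the standard grading, graded two-sided ideals of $M_n(B)$ are exactly the sets $M_n(I)$ for $I$ a graded ideal of $B$. The proof uses the matrix units $e_{ij} \in M_n(B)_e$: multiplying by them extracts entries while preserving degrees. Consequently $M_n(B)$ is gr-simple (resp.\ gr-prime) if and only if $B$ is.

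\emph{Corner step.} For a full homogeneous idempotent $\ell \in S_e$, where $S = M_n(B)$, the maps
\[
J \mapsto \ell J \ell, \qquad K \mapsto SKS
\]
are mutually inverse lattice isomorphisms between graded ideals of $S$ and graded ideals of $\ell S\ell$. Gradedness holds because $\ell$ has degree $e$, so $\ell S_\sigma \ell \subseteq S_\sigma$. Mutual inverseness uses fullness, $S\ell S = S$. This gives $\ell(SKS)\ell = K$ because $K = \ell K\ell$, and $S\ell J\ell S = J$ because $J = SJS = S\ell S J S\ell S \subseteq S\ell J\ell S$. The correspondence respects products, so gr-primeness transfers. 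Concretely, $I_1 I_2 = 0$ in $\ell S\ell$ if and only if $S I_1 S \cdot S I_2 S = S I_1 (S\ell S) I_2 S = 0$, using $I_1 = I_1\ell$. The correspondence also respects $0$ and the whole ring, so gr-simplicity transfers.

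For (2), take $n=1$ and $\ell \in R_e \subseteq B_e$ full in $R[t;\delta]$. Since $\delta(\ell)=0$, the relation $t\ell = \ell t$ holds in $B$. Hence
\[
\ell B\ell = \Bigl\{\sum_i \ell r_i \ell\, t^i\Bigr\},
\]
with $\ell t$ lying in $\ell B \ell$ of degree $\gamma$. For $s \in \ell R\ell$,
\[
(\ell t)s - s(\ell t) = \ell(ts - st) = \ell\delta(s) = \ell\delta(s)\ell = \bar\delta(s).
\]
Here I use $\delta(s) = \delta(\ell s\ell) = \ell\delta(s)\ell$, which follows from $\delta(\ell)=0$ via the Leibniz rule, so $\bar\delta = \delta|_{\ell R\ell}$ is a $\gamma$-derivation of $\ell R\ell$. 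Also $\gamma$ centralizes $\Gamma_{\ell R\ell} \subseteq \Gamma_R$.

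By the universal property (Theorem \ref{RTWBP9}) there is a graded homomorphism $(\ell R\ell)[t;\bar\delta] \to \ell B\ell$ with $t \mapsto \ell t$. It is surjective by the description of $\ell B\ell$ above. It is injective because the elements $(\ell t)^i = \ell t^i$ are left $R$-independent in $B$. Thus $A \cong_{\gr} \ell B\ell \cong_{\gr} (\ell R\ell)[t;\bar\delta]$.

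The main obstacle I expect is the fullness bookkeeping in (1): checking that the corner correspondence preserves gradedness and products over a non-abelian $\Gamma$. This is routine once everything is tracked with degree-$e$ idempotents and matrix units. A secondary point in (2) is that the statement assumes $\ell$ is full in $B$; it is not required to be full in $R$, and the argument above does not need the latter.
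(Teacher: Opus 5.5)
Your proposal is correct. For (2) it is essentially the paper's argument (Proposition \ref{EQ11L}(2): $t\ell=\ell t$, $\bar\delta=\delta|_{\ell R\ell}$, the universal property, then surjectivity and injectivity by comparing coefficients). Your choice $u=\ell t\in(\ell B\ell)_\gamma$ is if anything cleaner than the paper's $u=t$: it makes $\ell R\ell\hookrightarrow\ell B\ell$ unital, so Theorem \ref{Y6V1} applies verbatim.

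There is one slip in (2). The phrase ``left $R$-independent'' is false when $\ell\neq 1$, since $(1-\ell)\cdot\ell t^0=0$. What you actually use, and what holds, is $\sum_i s_i\ell t^i=\sum_i s_it^i$ for $s_i\in\ell R\ell$; freeness of $B$ on $\{t^i\}$ then gives injectivity. Your remark that fullness plays no role in (2) also matches the paper. There, fullness in $R_e$ is used only in Proposition \ref{EQ11L}(3), to show that it lifts to fullness in $(R[t;\delta])_e$.

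Part (1) is where you genuinely differ from the paper.
\begin{itemize}
\item[(a)] The paper proves only one implication: if $A\cong_{\gr}\ell M_n(B)\ell$ is gr-simple (resp.\ gr-prime), then so is $B$. It pushes a nonzero proper graded ideal $\mathfrak b$ (resp.\ a pair with $\mathfrak b_1\mathfrak b_2=0$) to $\ell M_n(\mathfrak b)\ell$. It uses $R\ell R=R$ only to see that this corner ideal is nonzero and, for simplicity, that $\ell\in M_n(\mathfrak b)$ forces $M_n(\mathfrak b)=R$.
\item[(b)] The paper gets the reverse implication from the symmetry of $\approx_{\mathrm{gs}}$. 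That amounts to invoking (C2)$\Leftrightarrow$(C4) a second time, to present $B$ as a full corner of a matrix ring over $A$.
\item[(c)] You instead establish the full graded Morita correspondence $J\mapsto\ell J\ell$, $K\mapsto SKS$, composed with $I\mapsto M_n(I)$. You check mutual inverseness and compatibility with products, and those checks are correct.
\end{itemize}
Your route costs a few more verifications, but it gives both directions from a single corner presentation and avoids the second appeal to \cite[Theorem 2.14]{abrams2022morita}. It also transfers any property defined through the lattice of graded ideals and their products, not just gr-simplicity and gr-primality. The paper's argument is shorter because it needs only the one-way map and its nonvanishing.
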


The paper is organized as follows. In Section \ref{EXTSN} we fix notation, recall the facts we need about graded rings, and introduce the $\gamma$-derivations, the homogeneous derivations used throughout this paper. In Section \ref{LEMJN86} we construct $R[t;\delta]$, prove that its grading is unique once $\deg(t)$ is fixed, and establish the universal property together with the structural isomorphisms of Theorem \ref{RTWBP9}. In Section \ref{EQAH2} we develop the ideal theory and prove Theorems \ref{FIGQZSDT}, \ref{LEM68Z4X}, and \ref{RTG1W}; the
leading-coefficient ideals of Lemma \ref{REM5HP} are our main tool. In Section \ref{PRF6UW} we prove Theorem \ref{LMR83W}.

\section{Preliminaries}\label{EXTSN}
 Throughout the paper, all rings are assumed to be associative with identity, and $\Gamma$ is a group, written multiplicatively, with identity $e$.

A ring $R$ is \emph{$\Gamma$-graded} if $R = \bigoplus_{\sigma \in \Gamma} R_\sigma$ for additive subgroups $R_\sigma$ with $R_\sigma R_\tau \subseteq R_{\sigma\tau}$ and $1_R \in R_e$. The subgroup $R_\sigma$ is the \emph{homogeneous component} of degree $\sigma$, and
\[
  \Gamma_R := \{\sigma \in \Gamma : R_\sigma \neq 0\}
\]
is the \emph{support} of the grading.

The elements of $\mathcal{H}(R) := \bigcup_{\sigma \in \Gamma} R_\sigma$ are \emph{homogeneous}, and a nonzero $x \in R_\sigma$ is \emph{homogeneous of degree} $\sigma$, written $\deg(x) = \sigma$. Every $x \in R$ has a unique decomposition $x = \sum_{\sigma \in \Gamma} x_\sigma$ with $x_\sigma \in R_\sigma$ and almost all $x_\sigma$ equal to zero. The component $R_e$ is a subring containing $1_R$, and a homogeneous unit $x \in R_\sigma$ satisfies $x^{-1} \in R_{\sigma^{-1}}$ \cite{nastasescu2011graded}.

The next remark records a property of the elements $\gamma \in C_\Gamma(\Gamma_R)$ that is used repeatedly in what follows.
\begin{remark}\label{REMHY8IT}
The centralizer $C_\Gamma(\Gamma_R) = \bigcap_{\sigma \in \Gamma_R} C_\Gamma(\sigma)$ is a subgroup of $\Gamma$. If $\gamma \in C_\Gamma(\Gamma_R)$, then $\gamma^i \in C_\Gamma(\Gamma_R)$ for every $i \in \mathbb{Z}$, so that $\gamma^i \sigma = \sigma \gamma^i$ for all $\sigma \in \Gamma_R$ and $i \in \mathbb{Z}$.
\end{remark}

We turn to the ideal-theoretic notions. An ideal $\mathfrak{a} \subseteq R$ is \emph{graded} if $\mathfrak{a} = \bigoplus_{\sigma \in \Gamma} (\mathfrak{a} \cap R_\sigma)$, equivalently, if $\mathfrak{a}$ contains the homogeneous components of each of its elements.

A graded ring $R$ is:
\begin{itemize}
  \item \emph{gr-simple} if its only graded ideals are $0$ and $R$;
  \item \emph{gr-prime} if, for $x, y \in \mathcal{H}(R)$, the equality $xRy = 0$ forces $x = 0$ or $y = 0$;
  \item \emph{gr-right-Noetherian} (resp.\ \emph{gr-left-Noetherian}) if every ascending chain of graded right (resp.\ left) ideals stabilizes.
\end{itemize}
The element-level definition of gr-primality is equivalent to the requirement that $\mathfrak{a}\mathfrak{b} = 0$ for graded ideals $\mathfrak{a}, \mathfrak{b}$ implies $\mathfrak{a} = 0$ or $\mathfrak{b} = 0$.

A ring homomorphism $f \colon R \too S$ of $\Gamma$-graded rings is \emph{graded} if $f(R_\sigma) \subseteq S_\sigma$ for all $\sigma$. A bijective graded homomorphism is a \emph{graded isomorphism}, written $R \cong_{\gr} S$.

We turn to derivations. A \emph{homogeneous derivation} of $R$ is an additive map $\delta \colon R \too R$ satisfying the Leibniz rule $\delta(xy) = \delta(x)y + x\delta(y)$ and carrying $\mathcal{H}(R)$ into itself. By \cite[Lemma 4.6.4]{GradQ}, there is a unique function
$\varepsilon_\delta \colon \Gamma_\delta \to \Gamma$, defined on $\Gamma_\delta := \{\sigma \in \Gamma : \delta(R_\sigma) \neq 0\}$, such that $\delta(R_\sigma) \subseteq R_{\varepsilon_\delta(\sigma)}$ for every $\sigma \in \Gamma_\delta$: it sends the degree of a homogeneous element to the degree of its image under $\delta$.

A graded ideal $\mathfrak{a}$ is \emph{$\delta$-invariant} if $\delta(\mathfrak{a}) \subseteq \mathfrak{a}$. The ring $R$ is \emph{$\delta$-gr-simple} if its only $\delta$-invariant graded ideals are $0$ and $R$, and \emph{$\delta$-gr-prime} if $\mathfrak{a}\mathfrak{b} = 0$ for $\delta$-invariant graded ideals forces $\mathfrak{a} = 0$ or $\mathfrak{b} = 0$.

We can now isolate the class of derivations for which $R[t;\delta]$ admits a grading.

\begin{definition}\label{LEMDO1X}
An additive map $\delta \colon R \too R$ satisfying the Leibniz rule
and $\delta(R_\tau) \subseteq R_{\tau\gamma}$ for all $\tau \in
\Gamma_R$ is a \emph{$\gamma$-derivation}; we write $\delta \in
\Der_\gamma(R)$. Such a $\delta$ is \emph{$\gamma$-inner} if
$\delta = \delta_r$ for some $r \in R_\gamma$, and
\emph{$\gamma$-outer} otherwise.
\end{definition}

We write $\mathrm{Inn}_\gamma(R) := \{\delta_r : r \in R_\gamma\}$ for the group of $\gamma$-inner derivations; it is an additive subgroup of $\mathrm{Der}_\gamma(R)$. By definition, $\delta$ is $\gamma$-outer precisely when its class $[\delta] \in \mathrm{Der}_\gamma(R)/
\mathrm{Inn}_\gamma(R)$ is nonzero. Even in characteristic zero, this quotient need not be torsion-free: there can be an integer $n\ge1$ and an element $r\in R_\gamma$ with $n\delta=\delta_r$, so that $\delta$ is $\gamma$-outer while $n\delta$ is already $\gamma$-inner. The following example exhibits exactly this phenomenon.

\begin{example}\label{EX:OUTER_NOT_STRONG}
Let $p$ be a prime, $\Gamma = \mathbb{Z}$, $\gamma = 0$, and let
\[
  \mathbb{H}_{\mathbb{Z}} :=
  \mathbb{Z}\langle i,j,k \mid i^2 = j^2 = k^2 = -1,
  ij = k, jk = i, ki = j\rangle
\]
be the ring of integer quaternions, with centre $Z(\mathbb{H}_{\mathbb{Z}}) = \mathbb{Z} \cdot 1$. Set
\[
  R := \mathbb{F}_p[x] \times \mathbb{H}_{\mathbb{Z}}[x],
  \quad
  R_n := \mathbb{F}_p x^n \times \mathbb{H}_{\mathbb{Z}} x^n,
  \quad
  \delta(f,g) :=(xf',\,0).
\]
Since $\delta(R_n) \subseteq R_n$, we have $\delta \in \Der_0(R)$. For every $r = (a,q) \in R_0 = \mathbb{F}_p \times
\mathbb{H}_{\mathbb{Z}}$, the commutativity of $\mathbb{F}_p$ gives
\[
  \delta_r(f,g) =\bigl(0,[q,g]\bigr),
\]
where $[q,g] = \sum_n [q,h_n]x^n$ for $g = \sum_n h_n x^n$, $h_n \in \mathbb{H}_{\mathbb{Z}}$; this vanishes if and only if $q \in Z(\mathbb{H}_{\mathbb{Z}})$.
Since every $\delta_r$ has vanishing first component while $\delta(x,0) =
(x,0) \neq 0$, the derivation $\delta$ is $0$-outer. That $\mathrm{Inn}_0(R) \neq 0$ is witnessed by
$\delta_{(0,i)}(f,jx) = (0,[i,j]x) = (0,2kx) \neq 0$,
since $ij = k$ and $ji = -k$.

Take $n := p$ and
$r := (0,\, p \cdot 1_{\mathbb{H}}) \in R_0$, which is nonzero
since $p \neq 0$ in $\mathbb{H}_{\mathbb{Z}}$. Then
\[
  p\,\delta(f,g) = (pxf',\,0) = (0,\,0)
  = \bigl(0,\,[p \cdot 1_{\mathbb{H}},\,g]\bigr)
  = \delta_r(f,g),
\]
where the second equality uses $\mathrm{char}(\mathbb{F}_p) = p$
and the third uses $p \cdot 1_{\mathbb{H}} \in
Z(\mathbb{H}_{\mathbb{Z}})$. Hence $[\delta]$ has torsion of order $p$ in $\Der_0(R)/\mathrm{Inn}_0(R)$.
\end{example}

\section{The graded structure of \texorpdfstring{$R[t;\delta]$}{R[t;delta]}}\label{LEMJN86}
In this section we build the graded ring $R[t;\delta]$, prove that its grading is unique once $\deg(t)$ is fixed, and establish the universal property of Theorem \ref{RTWBP9} together with the structural isomorphisms that follow from it. Throughout, $R$ is a $\Gamma$-graded ring.

Repeated application of \eqref{SECPPAZ9} gives
\begin{equation}\label{REMNPFA}
  t^n x = \sum_{k=0}^{n} \binom{n}{k} \, \delta^k(x)\, t^{\,n-k}
  \quad (x \in R,\ n \geq 0),
\end{equation}
where $\delta^0 = \mathrm{id}$. This identity is used throughout.

\begin{proposition}\label{EQYUGZ} Let $\delta$ be a derivation of $R$.
\begin{itemize}
\item[(1)] If $\delta \in \mathrm{Der}_\gamma(R)$ for some $\gamma \in C_\Gamma(\Gamma_R)$,
then \eqref{EXQBXX} defines a good grading on $R[t;\delta]$.
\item[(2)] Conversely, if $\delta$ is a homogeneous derivation such that
$R[t;\delta]$ admits a good grading with $\deg(t) = \gamma$, then
$\gamma \in C_\Gamma(\Gamma_R)$ and $\delta \in \mathrm{Der}_\gamma(R)$.
\end{itemize}
\end{proposition}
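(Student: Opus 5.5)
For part (1), I will verify the grading axioms for the decomposition \eqref{EXQBXX} directly. Since $R[t;\delta]$ is a free left $R$-module on $\{t^i\}$, and $R=\bigoplus_\sigma R_\sigma$, we get
\[
R[t;\delta]=\bigoplus_{i\ge0}\bigoplus_{\sigma} R_\sigma t^i ,
\]
and regrouping by $\tau=\sigma\gamma^i$ gives $R[t;\delta]=\bigoplus_\tau R[t;\delta]_\tau$. The point to check is $R[t;\delta]_\tau R[t;\delta]_{\tau'}\subseteq R[t;\delta]_{\tau\tau'}$, together with $1\in R_e t^0$.

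By distributivity it suffices to treat products $(a t^i)(b t^j)$ with $a\in R_\sigma$ and $b\in R_\rho$, where $\sigma,\rho\in\Gamma_R$. Using \eqref{REMNPFA}, this product equals
\[
\sum_k \binom ik\, a\,\delta^k(b)\, t^{i+j-k}.
\]
Iterating the shift condition gives $\delta^k(b)\in R_{\rho\gamma^k}$. One small caveat: $\rho\gamma^k$ need not lie in $\Gamma_R$, but in that case $\delta^k(b)=0$, so nothing is lost. Hence each summand has degree $\sigma\rho\gamma^k\gamma^{i+j-k}=\sigma\rho\gamma^{i+j}$. On the other hand, the target degree is $\sigma\gamma^i\rho\gamma^j$, and by Remark \ref{REMHY8IT} we have $\gamma^i\rho=\rho\gamma^i$, so the two agree.

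Compatibility then follows. The inclusion $R\hookrightarrow R[t;\delta]$ lands $R_\sigma$ in degree $\sigma$, and the relation $tx=xt+\delta(x)$ is homogeneous of degree $\gamma\sigma=\sigma\gamma$ for $x\in R_\sigma$.

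For part (2), suppose a good grading is given with $\deg t=\gamma$. I will take $x\in R_\sigma$ nonzero, with $\sigma\in\Gamma_R$. Since the inclusion is graded, $x$ is homogeneous of degree $\sigma$ in $R[t;\delta]$, so $tx$ has degree $\gamma\sigma$ and $xt$ has degree $\sigma\gamma$. Both are nonzero, because $tx=xt+\delta(x)$ has $t$-coefficient $x\ne0$ in the free basis.

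Homogeneity of the relation \eqref{SECPPAZ9} means that $tx$, $xt$ and $\delta(x)$ lie in a single component, with $\delta(x)$ possibly zero. This needs care. The weak reading is that $tx-xt=\delta(x)$ is homogeneous, which gives nothing new. So I interpret (ii) as in the paper's discussion: the two sides $tx$ and $xt+\delta(x)$ must be equal homogeneous elements, with each term of the right side of the same degree.

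Under that reading, $\gamma\sigma=\sigma\gamma$, so $\gamma\in C_\Gamma(\Gamma_R)$. If $\delta(x)\ne0$, then $\delta(x)$ has degree $\sigma\gamma$. Since $\delta$ is homogeneous, $\deg\delta(x)$ is a degree inside $R$, and the inclusion preserves degrees, so $\delta(x)\in R_{\sigma\gamma}$. Extending additively over $R_\sigma$ gives $\delta(R_\sigma)\subseteq R_{\sigma\gamma}$.

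The main obstacle I expect is this interpretive step. I need to extract from "the relation is homogeneous" that $tx$ and $xt$ individually have the same degree, not just that their difference is homogeneous. If only the weak form is available, I would argue as follows. Write $xt=tx-\delta(x)$; the left side is homogeneous of degree $\sigma\gamma$, and the right side is a sum of homogeneous elements of degrees $\gamma\sigma$ and $\deg\delta(x)$. Since $tx\ne0$ is not in $R$ (its $t$-coefficient is nonzero) while $\delta(x)\in R$, the two terms cannot cancel. Comparing components then forces $\gamma\sigma=\sigma\gamma$, and then $\deg\delta(x)=\sigma\gamma$ whenever $\delta(x)\neq0$.
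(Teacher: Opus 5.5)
Your proposal is correct and follows essentially the paper's proof: part (1) is the same direct-sum check followed by monomial-by-monomial multiplicativity via \eqref{REMNPFA} and Remark \ref{REMHY8IT}, and your fallback argument in part (2) is the paper's component comparison. As in the paper, that fallback uses only the gradedness of $\iota$, $\deg(t)=\gamma$, and the homogeneity of $\delta$, so you never need the stronger reading of condition (ii).
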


\begin{proof}
Since $\delta \in \Der_\gamma(R)$, induction on $k$ gives
\begin{equation}\label{DELTAK}
  \delta^k(R_\sigma) \subseteq R_{\sigma\gamma^k} \quad (\sigma \in
  \Gamma_R,\ k \geq 0).
\end{equation}
The case $k=0$ is trivial, and the inductive step uses $\delta \in
\Der_\gamma(R)$ together with the hypothesis $\delta^k(R_\sigma) \subseteq
R_{\sigma\gamma^k}$.

$(1)$ We first check that the subgroups $R[t;\delta]_\tau$ form an internal direct sum decomposition. As a left $R$-module, $R[t;\delta]$ is free on $\{t^i\}_{i \geq 0}$, so every $f \in R[t;\delta]$ is written uniquely as $f = \sum_{i \geq 0} x_i t^i$ with $x_i \in R$ almost all zero, and $x_i = \sum_{\sigma \in \Gamma_R}(x_i)_\sigma$. Grouping the monomials $(x_i)_\sigma t^i$ according to $\tau := \sigma\gamma^i$, so that $\sigma = \tau\gamma^{-i}$, we obtain
\[
  f = \sum_{\tau \in \Gamma} f_\tau,
  \quad
  f_\tau = \sum_{i \geq 0}(x_i)_{\tau\gamma^{-i}}\, t^i \in R[t;\delta]_\tau,
\]
with only finitely many $f_\tau$ nonzero. Hence $R[t;\delta] = \sum_{\tau} R[t;\delta]_\tau$.

For directness, suppose $\sum_\tau f_\tau = 0$ with $f_\tau = \sum_i c_{\tau,i} t^i$ and $c_{\tau,i} \in R_{\tau\gamma^{-i}}$. Comparing coefficients of $t^i$ gives $\sum_\tau c_{\tau,i} = 0$ for each $i$. For fixed $i$, the map $\tau \mapsto \tau\gamma^{-i}$ is a bijection of $\Gamma$, so the elements $c_{\tau,i}$ lie in pairwise distinct components of $R = \bigoplus_{\sigma\in \Gamma} R_\sigma$. Therefore $c_{\tau,i} = 0$ for all $\tau, i$, and $R[t;\delta] = \bigoplus_{\tau} R[t;\delta]_\tau$.

Since $1_R \in R_e \subseteq R[t;\delta]_e$, the identity is homogeneous of degree $e$.  For multiplicativity, every homogeneous element is a finite sum of
monomials $xt^i$ with $x \in R_{\sigma\gamma^{-i}}$, which has degree $\sigma$ by \eqref{EXQBXX}. It therefore suffices to verify
$R[t;\delta]_\sigma R[t;\delta]_\tau \subseteq R[t;\delta]_{\sigma\tau}$ on monomials $xt^i \in R[t;\delta]_\sigma$ and $x't^j \in R[t;\delta]_\tau$, where $x \in R_{\sigma\gamma^{-i}}$ and
$x' \in R_{\tau\gamma^{-j}}$. If $x = 0$ or $x' = 0$, the product
$(xt^i)(x't^j)$ vanishes and the inclusion holds trivially, so assume $x \neq 0$ and $x' \neq 0$; then $\sigma\gamma^{-i} \in \Gamma_R$ and $\tau\gamma^{-j} \in \Gamma_R$ by definition of the support. By
\eqref{REMNPFA},
\[
  (xt^i)(x't^j) = \sum_{k=0}^{i} \binom{i}{k}\, x\,\delta^k(x')\, t^{\,i+j-k}.
\]
Fix $k$, and write $l := i+j-k$. Since $\delta^k(x') \in
R_{\tau\gamma^{-j}\gamma^k} = R_{\tau\gamma^{k-j}}$ by \eqref{DELTAK},
and $\gamma^{-i}$ commutes with $\tau\gamma^{-j} \in \Gamma_R$ by
Remark \ref{REMHY8IT}, we get
\[
  x\,\delta^k(x') \in R_{\sigma\gamma^{-i}\,\tau\gamma^{k-j}}
  = R_{\sigma\tau\,\gamma^{-l}}.
\]
By \eqref{EXQBXX}, the term $R_{\sigma\tau\gamma^{-l}}\,t^l$ is precisely
the $t^l$-summand of $R[t;\delta]_{\sigma\tau}$, so
$x\,\delta^k(x')\,t^{\,i+j-k} \in R[t;\delta]_{\sigma\tau}$ for every $k$. Hence $(xt^i)(x't^j) \in R[t;\delta]_{\sigma\tau}$, and the grading is therefore multiplicative.

Condition (i) holds since $x = xt^0 \in R_\sigma t^0 \subseteq R[t;\delta]_\sigma$ for $x \in R_\sigma$. For condition~(ii), fix $\sigma \in \Gamma_R$ and $x \in R_\sigma$. Then $t \in R[t;\delta]_\gamma$ and $x \in R[t;\delta]_\sigma$, so $tx \in R[t;\delta]_{\gamma\sigma}$ and $xt \in R[t;\delta]_{\sigma\gamma}$. Since $\gamma\sigma = \sigma\gamma$, both lie in $R[t;\delta]_{\sigma\gamma}$, as does $\delta(x) \in R_{\sigma\gamma}$. Hence \eqref{SECPPAZ9} is homogeneous, and \eqref{EXQBXX} is a good grading.

$(2)$ Let $\{A_\tau\}_{\tau \in \Gamma}$ be a good grading on $R[t;\delta]$
with $\deg(t) = \gamma$, and fix $\sigma \in \Gamma_R$.

We first show $\sigma\gamma = \gamma\sigma$. Choose a nonzero
$x \in R_\sigma$. Since $R[t;\delta]$ is free as a left $R$-module on
$\{t^i\}_{i \ge 0}$, the elements $xt$ and $tx = xt + \delta(x)$ are
nonzero, and by multiplicativity of the grading $xt \in A_{\sigma\gamma}$
and $tx \in A_{\gamma\sigma}$.

If $\delta(x) = 0$, then $tx = xt$ is a single nonzero element lying in
both $A_{\sigma\gamma}$ and $A_{\gamma\sigma}$; since
$R[t;\delta] = \bigoplus_\tau A_\tau$ is a direct sum, a nonzero element
has a unique degree, so $\sigma\gamma = \gamma\sigma$.

If $\delta(x) \neq 0$, homogeneity of $\delta$ gives
$\delta(x) \in R_{\varepsilon_\delta(\sigma)} \subseteq A_{\varepsilon_\delta(\sigma)}$.
But $\delta(x) = tx - xt$, whose only possible nonzero homogeneous
constituents are $tx \in A_{\gamma\sigma}$ and $-xt \in A_{\sigma\gamma}$.
If $\gamma\sigma \neq \sigma\gamma$, these would be two distinct nonzero
homogeneous components of $\delta(x)$, contradicting that $\delta(x)$
lies in the single component $A_{\varepsilon_\delta(\sigma)}$. Hence
$\gamma\sigma = \sigma\gamma$, and the two components merge:
$\delta(x) \in A_{\sigma\gamma} \cap A_{\varepsilon_\delta(\sigma)}$, which
by directness forces $\varepsilon_\delta(\sigma) = \sigma\gamma$.
In either case $\sigma\gamma = \gamma\sigma$; as $\sigma \in \Gamma_R$ was
arbitrary, $\gamma \in C_\Gamma(\Gamma_R)$.

It remains to show $\delta(R_\sigma) \subseteq R_{\sigma\gamma}$ for every $\sigma \in \Gamma_R$, now using $\sigma\gamma = \gamma\sigma$ throughout. Fix $\sigma \in \Gamma_R$ and $x \in R_\sigma$. If $\delta(x) = 0$ there is
nothing to prove. If $\delta(x) \neq 0$, the computation above gives $tx, xt \in A_{\sigma\gamma}$, so $\delta(x) = tx - xt \in A_{\sigma\gamma}$, hence $\delta(x) \in A_{\sigma\gamma} \cap R = R_{\sigma\gamma}$. Thus $\delta(R_\sigma) \subseteq R_{\sigma\gamma}$ for every $\sigma \in \Gamma_R$, i.e. $\delta \in \mathrm{Der}_\gamma(R)$.
\end{proof}

The grading produced by Proposition \ref{EQYUGZ} is the only one compatible with the inclusion once the degree of $t$ is fixed.

\begin{proposition}\label{PRWBXK}
Let $\gamma \in C_\Gamma(\Gamma_R)$ and $\delta \in \Der_\gamma(R)$. The grading \eqref{EXQBXX} is the unique $\Gamma$-grading on $R[t;\delta]$ for which $\iota$ is graded and $t$ is homogeneous of degree $\gamma$. If $\delta \neq 0$, then $\gamma$ is determined by $\delta$.
\end{proposition}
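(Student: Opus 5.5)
Let $\{A_\tau\}_{\tau\in\Gamma}$ be any $\Gamma$-grading on $R[t;\delta]$ for which $\iota$ is graded and $t\in A_\gamma$. I will show $R[t;\delta]_\tau\subseteq A_\tau$ for every $\tau$, where $R[t;\delta]_\tau$ denotes the grading \eqref{EXQBXX}. I will then deduce equality from the fact that both families are direct-sum decompositions of the same ring.

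For the inclusion, take a monomial $xt^i$ with $x\in R_{\tau\gamma^{-i}}$. Since $\iota$ is graded, $x\in A_{\tau\gamma^{-i}}$. Since $t\in A_\gamma$, multiplicativity gives $t^i\in A_{\gamma^i}$. Hence $xt^i\in A_{\tau\gamma^{-i}\gamma^i}=A_\tau$. Summing over $i$ gives $R[t;\delta]_\tau\subseteq A_\tau$; no centralizer condition is needed at this step.

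For the reverse inclusion, let $f\in A_\tau$. By Proposition \ref{EQYUGZ}(1) we may decompose $f=\sum_\rho f_\rho$ with $f_\rho\in R[t;\delta]_\rho\subseteq A_\rho$. Uniqueness of the decomposition of $f$ in $\bigoplus_\rho A_\rho$ then forces $f_\rho=0$ for $\rho\neq\tau$. Thus $f=f_\tau\in R[t;\delta]_\tau$, so $A_\tau=R[t;\delta]_\tau$ and the grading is unique.

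For the last claim, suppose $\delta\neq0$ and $\delta\in\Der_\gamma(R)\cap\Der_{\gamma'}(R)$. Choose a homogeneous $x\in R_\sigma$ with $\delta(x)\neq0$; this is possible because $\delta$ is additive and every element is a finite sum of homogeneous ones, so $\sigma\in\Gamma_R$. Then $0\neq\delta(x)\in R_{\sigma\gamma}\cap R_{\sigma\gamma'}$. Directness of $R=\bigoplus R_\rho$ gives $\sigma\gamma=\sigma\gamma'$, and cancelling $\sigma$ gives $\gamma=\gamma'$.

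The only delicate point is that "unique grading" must be compared against an arbitrary grading $\{A_\tau\}$, not just one of the form \eqref{EXQBXX}. The two-step argument handles this: the forward inclusion holds for every $\tau$, and directness of both decompositions then yields equality.
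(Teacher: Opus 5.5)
Your proof is correct and follows essentially the same route as the paper: the inclusion of the components of \eqref{EXQBXX} into $A_\tau$ by multiplicativity, the reverse inclusion by uniqueness of the homogeneous decomposition in $\{A_\tau\}$ (you cite the sum decomposition from Proposition \ref{EQYUGZ}(1) where the paper collects terms directly), and the determination of $\gamma$ by directness of $R=\bigoplus_\rho R_\rho$ applied to a nonzero $\delta(x)$. The only difference is cosmetic: the paper phrases the last claim via a second good grading with $\deg(t)=\gamma'$, which reduces to your formulation $\delta\in\Der_\gamma(R)\cap\Der_{\gamma'}(R)$ by Proposition \ref{EQYUGZ}(2).
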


\begin{proof}
Let $\{A_\tau\}_{\tau \in \Gamma}$ be any $\Gamma$-grading on $R[t;\delta]$ with $\iota$ graded and $t \in A_\gamma$, and set $B_\tau := \bigoplus_{i \geq 0} R_{\tau\gamma^{-i}} t^i$ for the $\tau$-component of \eqref{EXQBXX}. We show that $A_\tau = B_\tau$ for every $\tau \in \Gamma$.

To prove $B_\tau \subseteq A_\tau$, let $f = \sum_{i=0}^{n} x_i t^i \in B_\tau$, so $x_i \in R_{\tau\gamma^{-i}}$ for all $i$. Since $\iota$ is graded, $x_i \in A_{\tau\gamma^{-i}}$; since $t \in A_\gamma$, multiplicativity gives $x_i t^i \in A_{\tau\gamma^{-i}} A_\gamma^{\,i} \subseteq A_\tau$. As $A_\tau$ is an additive subgroup, $f \in A_\tau$.

To prove $A_\tau \subseteq B_\tau$, let $f \in A_\tau$. Write $f = \sum_{i \geq 0} x_i t^i$ with $x_i \in R$, and decompose each $x_i = \sum_{\sigma \in \Gamma_R}(x_i)_\sigma$, so that $f = \sum_{i,\sigma}(x_i)_\sigma t^i$. By the inclusion just proved, each summand $(x_i)_\sigma t^i$ lies in $A_{\sigma\gamma^i}$. Setting $\eta := \sigma\gamma^i$ and collecting terms, $f = \sum_{\eta \in \Gamma} g_\eta$ with $g_\eta \in A_\eta$. This is the homogeneous decomposition of $f$ in $\{A_\tau\}$, which is unique; since $f \in A_\tau$, we have $g_\eta = 0$ for $\eta \neq \tau$, so
\[
  f = g_\tau = \sum_{i \geq 0}(x_i)_{\tau\gamma^{-i}}\, t^i \in B_\tau.
\]
Hence $A_\tau = B_\tau$ for all $\tau$.

Suppose $\delta \neq 0$ and that $\gamma' \in C_\Gamma(\Gamma_R)$ also yields a good grading on $R[t;\delta]$ with $\deg(t) = \gamma'$. Choose $\sigma \in \Gamma_R$ and $x \in R_\sigma$ with $\delta(x) \neq 0$. The good grading with parameter $\gamma$ requires $\delta(x) \in R_{\sigma\gamma}$, and the one with $\gamma'$ requires $\delta(x) \in R_{\sigma\gamma'}$. A nonzero homogeneous element lies in a single component, so the nonvanishing of $R_{\sigma\gamma} \cap R_{\sigma\gamma'}$ forces $\sigma\gamma = \sigma\gamma'$, and therefore $\gamma = \gamma'$.
\end{proof}

The centralizer hypothesis $\gamma \in C_\Gamma(\Gamma_R)$ cannot be dropped, even for a nonzero $\gamma$-derivation, as the next example shows.

\begin{example}\label{EX5AN}
Let $\Gamma = S_3 = \langle \sigma, \eta \mid \sigma^3 = \eta^2 = e,\ \eta\sigma\eta = \sigma^{-1}\rangle$. Let $V = \mathbb F v \oplus \mathbb F w$ be a two-dimensional graded $\mathbb F$-vector space with $\deg v = \sigma$ and $\deg w = \sigma\eta$, and form the square-zero extension \[ R := \mathbb F \oplus V, \quad V^2 = 0,
\] graded by $R_e = \mathbb F$, $R_\sigma = \mathbb F v$, $R_{\sigma\eta} = \mathbb F w$, and $R_\tau = 0$ otherwise. Then $\Gamma_R = \{e, \sigma, \sigma\eta\}$, and since $\eta\sigma \neq \sigma\eta$ in $S_3$, the element $\eta$ does not lie in $C_\Gamma(\Gamma_R)$. Define $\delta \colon R \to R$ by $\delta(1) = \delta(w) = 0$ and $\delta(v) = w$. A direct check shows $\delta \in \Der_\eta(R)$.

Suppose $R[t;\delta]$ admitted a good grading with
$t \in R[t;\delta]_{\gamma'}$. By condition (i) the inclusion is graded, so $v \in R[t;\delta]_\sigma$ and $w \in R[t;\delta]_{\sigma\eta}$; hence $tv \in R[t;\delta]_{\gamma'\sigma}$ and
$vt \in R[t;\delta]_{\sigma\gamma'}$ are homogeneous. The defining relation \eqref{SECPPAZ9} reads
\[tv - vt = \delta(v) = w.
\]
If $\gamma'\sigma \neq \sigma\gamma'$, the left-hand side is a sum of two nonzero homogeneous elements of distinct degrees, whereas $w$ is homogeneous, a contradiction. Hence $\gamma'\sigma = \sigma\gamma'$, so $tv - vt$ is homogeneous of degree $\gamma'\sigma$ and equals $w$, forcing $\gamma'\sigma = \sigma\eta$. Together these give $\gamma' = \eta$ and then $\eta\sigma = \sigma\eta$, contradicting the relations of $S_3$. Thus $R[t;\delta]$ admits no good grading, even though $\delta$ is a nonzero $\eta$-derivation of $R$.
\end{example}

There is a single graded ring-theoretic datum that encodes the pair
$(\delta, \gamma)$ of Theorem \ref{EXQWER23}, namely the square-zero
extension $R \ltimes R(\gamma)$.

\begin{remark}\label{EQ17G}
Let $R(\gamma)$ denote the abelian group $R$ with
$R(\gamma)_\sigma := R_{\sigma\gamma}$, equipped with the natural left
$R$-action and the right $R$-action, both given by the multiplication
of $R$ itself. The left action satisfies
$R_\sigma R(\gamma)_\tau \subseteq R(\gamma)_{\sigma\tau}$ for all
$\sigma, \tau \in \Gamma$. The right action satisfies
$R(\gamma)_\sigma R_\tau \subseteq R(\gamma)_{\sigma\tau}$ for all
$\sigma \in \Gamma$ and $\tau \in \Gamma_R$ if and only if
$\gamma\tau = \tau\gamma$ for every $\tau \in \Gamma_R$; this is
automatic for $\tau \notin \Gamma_R$, since $R_\tau = 0$. Hence
$R(\gamma)$ is a graded $R$-bimodule if and only if
$\gamma \in C_\Gamma(\Gamma_R)$.

Form the square-zero extension
\[
  T := R \ltimes R(\gamma), \quad
  (r,m)(r',m') = (rr', rm'+mr'),
\]
with grading $T_\sigma := R_\sigma \oplus R(\gamma)_\sigma$. This
grading is multiplicative if and only if $R(\gamma)$ is a graded
$R$-bimodule, that is, if and only if $\gamma \in C_\Gamma(\Gamma_R)$.

Once this holds, consider the map
\[
  \varphi \colon R \too T, \quad \varphi(x) := (x,\,\delta(x)).
\]
Without any gradedness requirement, $\varphi$ is a ring homomorphism if and only if $\delta$ satisfies the Leibniz rule: a direct computation gives $\varphi(x)\varphi(y) = (xy,\, x\delta(y) + \delta(x)y)$, so $\varphi$ is multiplicative exactly when
$\delta(xy) = x\delta(y) + \delta(x)y$. The graded condition
$\varphi(R_\sigma) \subseteq T_\sigma$ adds precisely the requirement $\delta(x) \in R(\gamma)_\sigma = R_{\sigma\gamma}$ for $x \in R_\sigma$, which is the definition of a $\gamma$-derivation. Hence $\delta$ is a $\gamma$-derivation of $R$ if and only if $\varphi$ is a graded ring homomorphism, and the $\gamma$-derivations of $R$ are in natural bijection with the graded ring sections of the projection
$T \twoheadrightarrow R$, $(r,m) \mapsto r$, via
$\delta \leftrightarrow \varphi$.
\end{remark}

We now illustrate Theorem \ref{EXQWER23} with several families of
examples, beginning with the Weyl algebras and moving toward a non-abelian grading group.

\begin{example}[Weyl algebras]\label{PRI6Z}
Let $R = k[x_1,\dots,x_n]$ over a field $k$, graded by
$\Gamma = \mathbb{Z}^n$ with $\deg(x_j) = e_j$, so that
$R_a = k\cdot x^a$ for $a \in \mathbb{N}^n$ and
$\Gamma_R = \mathbb{N}^n$. Since $\Gamma$ is abelian,
$C_\Gamma(\Gamma_R) = \mathbb{Z}^n$. The partial derivative
$\partial_j := \partial/\partial x_j$ satisfies
$\partial_j(x^a) = a_j x^{a-e_j} \in R_{a-e_j}$,
so $\partial_j \in \Der_{-e_j}(R)$. At each stage of the
iterative construction, $\partial_j$ extends to a
$(-e_j)$-derivation on $k[x_1,\dots,x_n][t_1;\partial_1]
\cdots[t_{j-1};\partial_{j-1}]$ by acting on the
$x$-coefficients and annihilating $t_1,\dots,t_{j-1}$.
Adjoining all the variables yields the $n$-th Weyl algebra
\[
  A_n(k) =
  k[x_1,\dots,x_n][t_1;\partial_1]\cdots[t_n;\partial_n].
\]
By \eqref{EXQBXX}, $\deg(t_j) = -e_j$ and every monomial
$x^a t^b$ is homogeneous of degree
$\sum_j(a_j - b_j)e_j = a - b \in \mathbb{Z}^n$.
For $n=1$ this recovers $A_1(k) = k[x][t;\,d/dx]$.
\end{example}

\begin{example}[Localized Weyl algebra]\label{FIGYE4U}
Let $R = k[x,x^{-1}]$, graded by $\Gamma = \mathbb{Z}$ with
$R_n = k\cdot x^n$, so that $\Gamma_R = \mathbb{Z}$. The derivation $\delta = d/dx$ satisfies $\delta(x^n) = nx^{n-1} \in R_{n-1}$, so $\delta \in \Der_{-1}(R)$. The ring
$R[t;\delta] = k[x,x^{-1}][t;\,d/dx]$ is the localization of $A_1(k)$ at the powers of $x$;
its two fundamental relations are $tx = xt+1$ and $tx^{-1}
  = x^{-1}t - x^{-2}$. By \eqref{EXQBXX}, $\deg(x^a t^i) = a-i$.
\end{example}

\begin{example}[Enveloping algebra of a solvable Lie
algebra]\label{EQ203}
Let $R = k[x]$ with its standard $\mathbb{Z}$-grading. The Euler derivation $\delta = x\,d/dx$ satisfies $\delta(x^n) = nx^n \in R_n$, so $\delta \in \Der_0(R)$. The relation
$tx - xt = \delta(x) = x$ identifies $R[t;\delta] =
k[x][t;\,x\,d/dx]$ with $U(\mathfrak{b})$, the universal
enveloping algebra of the two-dimensional non-abelian Lie algebra $\mathfrak{b} = \langle t,x \mid [t,x]=x\rangle$: both algebras have generators $\{x,t\}$, the relation $tx-xt=x$, and the PBW basis $\{x^m t^n\}_{m,n\geq 0}$. Since $\gamma = 0$, formula \eqref{EXQBXX} gives $R[t;\delta]_\sigma =
\bigoplus_{i\geq 0} R_\sigma t^i$ with $\deg(t) = 0$,
so that $\deg(x^n t^i) = n$.
\end{example}

The grading groups so far are abelian; the next example takes a
non-abelian grading group.

\begin{example}[A non-abelian grading group]\label{EX2QJ7}
Let $k$ be a field and $\Gamma = H = \langle a,b,z \mid [a,b]=z,[a,z]=[b,z]=e\rangle$ the discrete Heisenberg group, with centre
$Z(H) = \langle z\rangle \cong \mathbb{Z}$. Take $R = kH$ with
$R_\tau = k\tau$, so that $\Gamma_R = H$ and
$C_H(H) = Z(H) = \langle z\rangle$.

Fix $\gamma := z \in Z(H)$. Since each $R_\tau$ is
one-dimensional, a $\gamma$-derivation $\delta$ is determined by scalars $c_\tau \in k$ via $\delta(\tau) = c_\tau\,\tau z$.
The Leibniz rule forces, using $z \in Z(H)$,
\[
  c_{\sigma\tau}(\sigma\tau)z
  = (c_\sigma + c_\tau)\,\sigma\tau z,
\]
so $\tau \mapsto c_\tau$ is a group homomorphism $H \too (k,+)$.
Since $z = [a,b] = aba^{-1}b^{-1}$ lies in the commutator
subgroup, every such homomorphism satisfies
$c_z = c_a + c_b - c_a - c_b = 0$. The abelianization
$H^{\mathrm{ab}} \cong \mathbb{Z}^2$ is free on $\bar a$ and
$\bar b$, so any $(c_a,c_b) \in k^2$ extends uniquely to such a
homomorphism. Hence $\Der_\gamma(R)$ is a $2$-dimensional
$k$-vector space. The choice $c_a = 1$, $c_b = 0$ gives, for
$\tau = a^i b^j z^l$,
\[
  c_\tau = ic_a + jc_b + lc_z = i,
  \quad \delta(a^i b^j z^l) = i\,a^i b^j z^{l+1}.
\]
By Proposition \ref{EQYUGZ}, formula \eqref{EXQBXX} equips
$R[t;\delta]$ with a good $H$-grading with $\deg(t) = z$, and
every monomial $\tau\,t^n$ is homogeneous of degree $\tau z^n$.
\end{example}

The base ring is commutative in Examples \ref{PRI6Z}--\ref{EQ203}; we
close with a noncommutative one.

\begin{example}[Quantum plane]\label{RT4M2}
Let $q \in k^\times$ with $q \neq 1$, and let
$R = k_q[x,y] = k\langle x,y\rangle/(yx-qxy)$, graded by
$\Gamma = \mathbb{Z}^2$ with $\deg(x) = (1,0)$,
$\deg(y) = (0,1)$, so that $R_{(a,b)} = k\,x^a y^b$ for
$(a,b) \in \mathbb{N}^2$. Since $\Gamma$ is abelian,
$C_\Gamma(\Gamma_R) = \mathbb{Z}^2$. Set
$\gamma := (0,1)$ and $\delta(f) := yf - fy$. On the generators,
$\delta(x) = yx - xy = (q-1)xy \in R_{(1,1)}$ and
$\delta(y) = 0$, so $\delta \in \Der_\gamma(R)$. For a
general monomial, $yx^a = q^a x^a y$ by induction, giving
\[
  \delta(x^a y^b)
  = (q^a-1)\,x^a y^{b+1} \in  R_{(a,\,b+1)}.
\]
By Proposition \ref{EQYUGZ}, formula \eqref{EXQBXX} equips
$R[t;\delta]$ with a good $\mathbb{Z}^2$-grading with
$\deg(t) = (0,1)$. The fundamental relation
$tf = ft + \delta(f)$ applied to $f = x^a y^b$ reads
\[
  t\,x^a y^b = x^a y^b\,t + (q^a-1)\,x^a y^{b+1},
\] which is homogeneous of degree $(a,b+1)$ on both sides,
confirming condition \textup{(ii)}.
\end{example}

We now establish the universal property, the central tool behind
Theorem \ref{RTWBP9}. 

The category $\mathcal{C}_\delta$ has as objects
the triples $(S, \varphi, u)$ with $S$ a graded ring,
$\varphi \colon R \too S$ a graded ring homomorphism, and
$u \in S_\gamma$ such that
\begin{equation}\label{REMA1YTU}
  u\varphi(x) - \varphi(x)u = \varphi(\delta(x)) \quad (x \in R),
\end{equation}
and as morphisms the graded ring homomorphisms $h$ with $h \circ \varphi = \varphi'$ and $h(u) = u'$.

\begin{theorem}\label{Y6V1}
Let $R$ be a graded ring and $\delta \in \Der_\gamma(R)$. The pair $(R[t;\delta], t)$ is the initial object of $\mathcal{C}_\delta$: for every $(S, \varphi, u)$ in $\mathcal{C}_\delta$, there is a unique graded ring homomorphism $\widehat{\varphi} \colon R[t;\delta] \too S$ with $\widehat{\varphi} \circ \iota = \varphi$ and $\widehat{\varphi}(t) = u$.
\end{theorem}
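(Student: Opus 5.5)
Since $R[t;\delta]$ is free as a left $R$-module on $\{t^i\}_{i\ge 0}$, any homomorphism with the required properties must satisfy
\[
  \widehat{\varphi}\Bigl(\sum_i x_i t^i\Bigr) = \sum_i \varphi(x_i)\,u^i,
\]
which settles uniqueness. I take this formula as the definition. It is well defined and additive by freeness, sends $1$ to $1$, and satisfies $\widehat{\varphi}\circ\iota=\varphi$ and $\widehat{\varphi}(t)=u$. Two things remain: multiplicativity and gradedness. Note also that $(R[t;\delta],\iota,t)$ is itself an object of $\mathcal{C}_\delta$, because \eqref{SECPPAZ9} is exactly \eqref{REMA1YTU}, and Proposition \ref{EQYUGZ}(1) makes $\iota$ graded with $t\in R[t;\delta]_\gamma$. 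For this I assume the standing hypothesis $\gamma\in C_\Gamma(\Gamma_R)$ of Theorem \ref{RTWBP9}.

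The main step is multiplicativity. First I prove, by induction on $n$ using \eqref{REMA1YTU}, the identity in $S$
\[
  u^n\varphi(x)=\sum_{k=0}^n\binom nk\varphi(\delta^k(x))\,u^{n-k}.
\]
This is the same binomial argument that gives \eqref{REMNPFA}. In the inductive step I multiply on the left by $u$, rewrite each $u\varphi(\delta^k(x))$ as $\varphi(\delta^k(x))u+\varphi(\delta^{k+1}(x))$, and collect terms with Pascal's rule. Then for monomials,
\[
  \widehat{\varphi}\bigl((xt^i)(x't^j)\bigr)=\sum_k\binom ik\varphi(x\,\delta^k(x'))\,u^{i+j-k},
\]
using \eqref{REMNPFA} together with the multiplicativity of $\varphi$. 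This equals $\varphi(x)\,u^i\varphi(x')\,u^j$ by the identity above. Bi-additivity then extends multiplicativity to all elements.

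Gradedness is easy given the explicit grading \eqref{EXQBXX}. A homogeneous element of degree $\tau$ is a sum of monomials $xt^i$ with $x\in R_{\tau\gamma^{-i}}$. Each such monomial maps to $\varphi(x)u^i\in S_{\tau\gamma^{-i}}S_{\gamma^i}\subseteq S_\tau$, since $\varphi$ is graded and $u\in S_\gamma$.

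Finally, initiality in $\mathcal{C}_\delta$ follows from existence and uniqueness: $\widehat{\varphi}$ is a morphism $(R[t;\delta],\iota,t)\to(S,\varphi,u)$, and any other morphism agrees with it on $R$ and on $t$, which generate the ring. I expect the only real obstacle to be the bookkeeping in the induction for $u^n\varphi(x)$. Everything else is formal.
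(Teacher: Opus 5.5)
Your proposal is correct and follows the paper's proof essentially step for step. It has the same uniqueness argument from freeness over $R$, the same defining formula $\widehat{\varphi}(\sum_i x_i t^i)=\sum_i\varphi(x_i)u^i$, multiplicativity via the binomial identity for $u^n\varphi(x)$ proved by induction with Pascal's rule, and gradedness checked on monomials using \eqref{EXQBXX}. Your explicit use of the standing hypothesis $\gamma\in C_\Gamma(\Gamma_R)$, without which \eqref{EXQBXX} is not a grading, is a sensible addition that the theorem's statement leaves implicit.
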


\begin{proof}
If $\psi \colon R[t;\delta] \too S$ is a graded ring homomorphism with $\psi \circ \iota = \varphi$ and $\psi(t) = u$, then every element of $R[t;\delta]$ has the form $\sum_{i \geq 0} x_i t^i$, and $\psi(\sum_{i \geq 0} x_i t^i) = \sum_{i \geq 0} \varphi(x_i)\, u^i$.
This proves uniqueness, and suggests the formula for existence: define
\[
  \widehat{\varphi}\Bigl(\sum_{i \geq 0} x_i t^i\Bigr)
  := \sum_{i \geq 0} \varphi(x_i)\, u^i.
\]
Since every element of $R[t;\delta]$ has a unique such expression,
$\widehat\varphi$ is well defined, and it is clearly additive and
unital.

To prove that $\widehat\varphi$ is multiplicative, we first establish the auxiliary identity
\begin{equation}\label{PRFBQN}
  u^i\, \varphi(x) = \sum_{k=0}^{i} \binom{i}{k}\,
  \varphi\bigl(\delta^k(x)\bigr)\, u^{i-k}
  \quad (x \in R,\ i \geq 0),
\end{equation}
the image under $\widehat\varphi$ of \eqref{REMNPFA}, by induction on $i$. The case $i=0$ is immediate, and the case $i=1$ is
\eqref{REMA1YTU} rewritten as
$u\,\varphi(x) = \varphi(x)\,u + \varphi(\delta(x))$. Assuming
\eqref{PRFBQN} at some $i \geq 1$,
\[
  u^{i+1}\varphi(x) = u\,(u^i\varphi(x))
  = \sum_{k=0}^{i} \binom{i}{k}\,\bigl(u\,\varphi(\delta^k(x))\bigr)\, u^{i-k};
\]
applying the case $i=1$ to each $\delta^k(x)$ and using
$\binom{i}{k} + \binom{i}{k-1} = \binom{i+1}{k}$ yields
\eqref{PRFBQN} at $i+1$. Multiplicativity now follows: applying
$\widehat\varphi$ to $fg$ via \eqref{REMNPFA} and \eqref{PRFBQN} gives $\widehat\varphi(fg) = \widehat\varphi(f)\,\widehat\varphi(g)$.

It remains to check compatibility with the gradings. For $\sigma \in \Gamma_R$ and $x \in R_\sigma$, since $\varphi$ is graded we have $\varphi(x) \in S_\sigma$, and $u \in S_\gamma$ gives $u^i \in
S_{\gamma^i}$ for every $i \geq 0$; hence
\[\widehat\varphi(xt^i) = \varphi(x)\,u^i \in S_\sigma\, S_{\gamma^i}\subseteq S_{\sigma\gamma^i}.
\]
By \eqref{EXQBXX}, $xt^i \in R[t;\delta]_{\sigma\gamma^i}$, so
$\widehat\varphi$ sends this monomial into the component of $S$ with the matching degree. Since every homogeneous element of $R[t;\delta]$ is a finite sum of such monomials, $\widehat\varphi$ is graded.
\end{proof}

The three structural isomorphisms of Theorem \ref{RTWBP9} are immediate consequences.

\begin{corollary}\label{FIGC7ZKG}
Let $\delta_1, \delta_2 \in \Der_\gamma(R)$ with $\delta_2 = \psi \circ \delta_1 \circ \psi^{-1}$ for some $\psi \in \mathrm{Aut}_{\gr}(R)$. There is a graded isomorphism $\widetilde{\psi}\colon R[t;\delta_1] \xrightarrow{\sim} R[t;\delta_2]$ extending $\psi$.
\end{corollary}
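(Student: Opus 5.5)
The plan is to apply the universal property of Theorem \ref{Y6V1} twice, once in each direction, and then conclude from its uniqueness clause that the two resulting maps are mutually inverse. First we need a triple in $\mathcal{C}_{\delta_1}$ whose ring is $R[t;\delta_2]$. We take
\[
  \bigl(R[t;\delta_2],\ \iota_2\circ\psi,\ t\bigr),
\]
where $\iota_2\colon R\hookrightarrow R[t;\delta_2]$ is the graded inclusion. Since $\psi$ is a graded automorphism and $\iota_2$ is graded, the composite $\varphi:=\iota_2\circ\psi$ is a graded ring homomorphism. Moreover $t\in R[t;\delta_2]_\gamma$ by \eqref{EXQBXX}. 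Here we use the standing hypothesis $\gamma\in C_\Gamma(\Gamma_R)$ of Theorem \ref{RTWBP9}, so that both rings carry the good grading of Proposition \ref{EQYUGZ}.

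The one identity to verify is \eqref{REMA1YTU} for $\delta_1$. For $x\in R$, the defining relation \eqref{SECPPAZ9} in $R[t;\delta_2]$ gives
\[
  t\psi(x)-\psi(x)t=\delta_2(\psi(x)).
\]
From $\delta_2=\psi\circ\delta_1\circ\psi^{-1}$ we get $\delta_2(\psi(x))=\psi(\delta_1(x))=\varphi(\delta_1(x))$. So the triple lies in $\mathcal{C}_{\delta_1}$, and Theorem \ref{Y6V1} yields a graded homomorphism $\widetilde\psi\colon R[t;\delta_1]\to R[t;\delta_2]$ with $\widetilde\psi|_R=\psi$ and $\widetilde\psi(t)=t$. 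Explicitly, $\widetilde\psi\bigl(\sum x_it^i\bigr)=\sum\psi(x_i)t^i$.

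For the reverse map we note that $\delta_1=\psi^{-1}\circ\delta_2\circ\psi$ and that $\psi^{-1}\in\mathrm{Aut}_{\gr}(R)$. The same argument with the roles exchanged produces a graded homomorphism $\widetilde{\psi^{-1}}\colon R[t;\delta_2]\to R[t;\delta_1]$ that extends $\psi^{-1}$ and fixes $t$.

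It remains to show these maps are inverse to each other. The composite $\widetilde{\psi^{-1}}\circ\widetilde\psi$ is a graded endomorphism of $R[t;\delta_1]$ that restricts to the identity on $R$ and sends $t\mapsto t$. The identity map has the same properties, and both are morphisms in $\mathcal{C}_{\delta_1}$ from the initial object $(R[t;\delta_1],\iota_1,t)$ to itself. By the uniqueness in Theorem \ref{Y6V1} they coincide. The other composite is handled in the same way in $\mathcal{C}_{\delta_2}$. Hence $\widetilde\psi$ is a bijective graded homomorphism, that is, a graded isomorphism extending $\psi$.

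The only step that needs care is the verification of \eqref{REMA1YTU}, specifically the direction of the conjugation. One must check that $\delta_2\circ\psi=\psi\circ\delta_1$ is exactly the identity required. Everything else follows formally from the universal property.
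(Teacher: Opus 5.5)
Your proposal is correct and is essentially the paper's proof: both apply Theorem \ref{Y6V1} to the triple $(R[t;\delta_2],\iota_2\circ\psi,t)$ using $\delta_2\circ\psi=\psi\circ\delta_1$, and then repeat the construction with $\psi^{-1}$ to get the inverse. You also spell out the uniqueness argument showing that both composites are identities, which the paper leaves implicit.
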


\begin{proof}
In $R[t;\delta_2]$ one has $t\,\psi(x) - \psi(x)\,t = \psi(\delta_1(x))$, so \eqref{REMA1YTU} holds with $\varphi := \iota_2 \circ \psi$ and $u := t$. Theorem \ref{Y6V1} gives a graded homomorphism $\widetilde{\psi} \colon R[t;\delta_1] \too R[t;\delta_2]$ with $\widetilde{\psi}\circ \iota_1 = \iota_2 \circ \psi$ and $\widetilde{\psi}(t) = t$. Applying the same argument to $\psi^{-1}$ produces an inverse.
\end{proof}

\begin{corollary}\label{Y2J8Z}
Let $\delta = \delta_r + \delta_2$ with $\delta_2 \in \Der_\gamma(R)$ and $\delta_r(x) = rx - xr$ for some $r \in R_\gamma$. Then $R[t;\delta] \cong_{\gr} R[t;\delta_2]$.
\end{corollary}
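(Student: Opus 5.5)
We apply the universal property of Theorem \ref{Y6V1} twice, exactly as in the proof of Corollary \ref{FIGC7ZKG}. The key is to replace $t$ by the shifted element $t - r$ (or $t + r$). Throughout, $\delta_r(x) = rx - xr$ with $r \in R_\gamma$, and $\gamma \in C_\Gamma(\Gamma_R)$ is the standing hypothesis, so that both rings carry the good grading \eqref{EXQBXX} with $\deg(t) = \gamma$. Write $t_1$ for the variable of $R[t;\delta]$ and $t_2$ for that of $R[t;\delta_2]$.

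First, build a map $R[t;\delta] \to R[t;\delta_2]$. Take $S := R[t;\delta_2]$, $\varphi := \iota_2$ and $u := t_2 + r$. Since $r \in R_\gamma \subseteq S_\gamma$ and $t_2 \in S_\gamma$, we have $u \in S_\gamma$. Condition \eqref{REMA1YTU} follows from
\[
  u x - x u = (t_2 x - x t_2) + (r x - x r) = \delta_2(x) + \delta_r(x) = \delta(x)
  \quad (x \in R).
\]
Theorem \ref{Y6V1} then gives a graded homomorphism $h \colon R[t;\delta] \to R[t;\delta_2]$ with $h|_R = \mathrm{id}$ and $h(t_1) = t_2 + r$.

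Second, build the reverse map. Note that $\delta_2 = \delta - \delta_r = \delta + \delta_{-r}$, and $-r \in R_\gamma$, so the same argument with $S := R[t;\delta]$ and $u := t_1 - r$ gives a graded homomorphism $h' \colon R[t;\delta_2] \to R[t;\delta]$ with $h'|_R = \mathrm{id}$ and $h'(t_2) = t_1 - r$.

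Finally, $h' \circ h$ is a graded endomorphism of $R[t;\delta]$ that fixes $R$ and sends $t_1 \mapsto t_1 - r + r = t_1$. By the uniqueness clause of Theorem \ref{Y6V1}, applied to the object $(R[t;\delta], \iota, t_1)$, this composite is the identity. Symmetrically, $h \circ h' = \mathrm{id}$, so $h$ is a graded isomorphism.

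The only point needing care is that $u$ is homogeneous of degree $\gamma$, which holds precisely because $r \in R_\gamma$. A smaller point is that $\delta_2 = \delta - \delta_r$ is itself a $\gamma$-derivation, so that Theorem \ref{Y6V1} applies to it. This is given in the statement, and in any case follows because $\delta_r \in \Der_\gamma(R)$: for $x \in R_\sigma$ we have $rx,\, xr \in R_{\gamma\sigma} = R_{\sigma\gamma}$, using that $\gamma$ commutes with $\sigma \in \Gamma_R$. Beyond this, no real obstacle is expected.
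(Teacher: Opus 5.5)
Your proof is correct and is essentially the paper's argument: both apply Theorem \ref{Y6V1} to the shifted elements $t+r \in R[t;\delta_2]_\gamma$ and $t-r \in R[t;\delta]_\gamma$, then use the uniqueness clause to conclude that the two composites are identities. One small misattribution: $\delta_2$ is already assumed to be a $\gamma$-derivation, so your check that $\delta_r \in \Der_\gamma(R)$ is really what ensures $\delta = \delta_r + \delta_2 \in \Der_\gamma(R)$.
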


\begin{proof}
Set $u := t - r \in R[t;\delta]_\gamma$. For $x \in R$ we have $ux - xu = \delta_2(x)$, so $(R[t;\delta], u) \in \mathcal{C}_{\delta_2}$, and Theorem \ref{Y6V1} provides a graded homomorphism $\varphi \colon R[t;\delta_2] \too R[t;\delta]$ fixing $R$ with $\varphi(t) = t - r$. Setting $v := t + r \in R[t;\delta_2]$, the identity $vx - xv = \delta(x)$ holds, so Theorem \ref{Y6V1} produces $\psi \colon R[t;\delta] \too R[t;\delta_2]$ fixing $R$ with $\psi(t) = t + r$. Both composites fix $R$ and $t$; by uniqueness, $\psi\varphi = \mathrm{id}$ and $\varphi\psi = \mathrm{id}$.
\end{proof}

\begin{corollary}\label{COR05933}
Let $T$ be a graded ring and $\delta \in \Der_\gamma(T)$. Endow $R := M_n(T)$ with the grading $M_n(T)_\tau := M_n(T_\tau)$, and let $\hat{\delta}$ be the entrywise $\gamma$-derivation $\hat{\delta}\bigl((x_{ij})\bigr) := (\delta(x_{ij}))$. Then $M_n(T)[t;\hat{\delta}] \cong_{\gr} M_n(T[t;\delta])$.
\end{corollary}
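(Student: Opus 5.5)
We apply the universal property of Theorem \ref{Y6V1} in both directions, as in Corollaries \ref{FIGC7ZKG} and \ref{Y2J8Z}. First note that $\hat{\delta}$ is indeed a $\gamma$-derivation of $M_n(T)$. Additivity is clear. For the Leibniz rule,
\[
  \hat\delta(XY)_{ij}=\sum_k \delta(x_{ik}y_{kj})=\sum_k\bigl(\delta(x_{ik})y_{kj}+x_{ik}\delta(y_{kj})\bigr).
\]
For the shift, $\hat\delta(M_n(T_\tau))\subseteq M_n(T_{\tau\gamma})$. The support of $M_n(T)$ equals $\Gamma_T$, so $\gamma\in C_\Gamma(\Gamma_{M_n(T)})$ whenever $\gamma\in C_\Gamma(\Gamma_T)$. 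This is the standing hypothesis under which both sides carry the good grading \eqref{EXQBXX}, and $M_n(T[t;\delta])$ carries the standard matrix grading $M_n(T[t;\delta]_\tau)$.

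\textbf{The forward map.} Take $S:=M_n(T[t;\delta])$ with $\varphi\colon M_n(T)\to S$ induced entrywise by the inclusion $T\hookrightarrow T[t;\delta]$. This $\varphi$ is a graded ring homomorphism. Take $u:=tI_n$, the scalar matrix, which lies in $M_n(T[t;\delta]_\gamma)=S_\gamma$. The commutator is computed entrywise:
\[
  \bigl(u\varphi(X)-\varphi(X)u\bigr)_{ij}=tx_{ij}-x_{ij}t=\delta(x_{ij}),
\]
so \eqref{REMA1YTU} holds. Theorem \ref{Y6V1} then gives a graded homomorphism $\Phi\colon M_n(T)[t;\hat\delta]\to M_n(T[t;\delta])$. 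It sends $\sum_i X_it^i\mapsto \sum_i X_i\,(tI_n)^i$, that is, $(\sum_i (X_i)_{jk}t^i)_{jk}$.

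\textbf{The inverse.} Here no universal property of $M_n(T[t;\delta])$ is available directly, so I would check bijectivity by hand. The ring $M_n(T)[t;\hat\delta]$ is free as a left $M_n(T)$-module on the $t^i$. Since $M_n(T)$ has the matrix units $e_{jk}$, every element has a unique expression $\sum_{i,j,k} x^{(i)}_{jk}e_{jk}t^i$ with $x^{(i)}_{jk}\in T$. Its image under $\Phi$ is the matrix whose $(j,k)$ entry is $\sum_i x^{(i)}_{jk}t^i$. Freeness of $T[t;\delta]$ on the $t^i$ shows that this assignment is a bijection between the two underlying abelian groups. Hence $\Phi$ is injective and surjective, and a bijective graded homomorphism is a graded isomorphism.

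Alternatively, one could build the inverse from the universal property of $M_n(\cdot)$ via the matrix units $e_{jk}t$ in the Ore extension. The coefficient-comparison argument above is shorter.

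\textbf{Where the care is needed.} The main point is the step showing $\Phi$ acts coefficientwise. This requires that $tI_n$ commutes with the constant matrix units $e_{jk}$, which holds because $\hat\delta(e_{jk})=0$ since $\delta(1)=\delta(0)=0$. It also requires that the powers $(tI_n)^i=t^iI_n$ do not mix entries.

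Gradedness of the inverse is then automatic, because the forward map is graded and bijective. Explicitly, $x\,e_{jk}t^i$ with $x\in T_{\tau\gamma^{-i}}$ has degree $\tau$ on both sides.
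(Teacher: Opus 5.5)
Your proposal is correct and follows essentially the paper's route: the paper also applies Theorem \ref{Y6V1} to $(M_n(T[t;\delta]),\varphi,tI_n)$, and it gets bijectivity by observing that the induced map sends $\sum_i X_i t^i$ to $\sum_i X_i (tI_n)^i$, matching the free left $M_n(T)$-bases $\{t^i\}$ and $\{t^iI_n\}$, which is exactly your entrywise coefficient comparison. Your remark that $tI_n$ must commute with the matrix units $e_{jk}$ is harmless but not actually needed, since the coefficientwise formula for the map is already built into the explicit form of $\widehat{\varphi}$ in Theorem \ref{Y6V1}.
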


\begin{proof}
The map $\varphi \colon M_n(T) \too M_n(T[t;\delta])$ defined entrywise by the coefficient inclusion $T \hookrightarrow T[t;\delta]$ is a graded ring homomorphism. With $u := tI_n \in M_n(T[t;\delta])_\gamma$, the relation $tx_{ij} - x_{ij}t = \delta(x_{ij})$ in $T[t;\delta]$ gives $u\varphi(A) - \varphi(A)u = \varphi(\hat{\delta}(A))$ for $A = (x_{ij})$. Hence \eqref{REMA1YTU} holds, and Theorem \ref{Y6V1} yields a graded homomorphism $M_n(T)[t;\hat{\delta}] \too M_n(T[t;\delta])$ extending $\varphi$ and sending $t$ to $tI_n$. Both rings are free left $M_n(T)$-modules, the first on $\{I_n, t,
t^2, \dotsc\}$ and the second on $\{I_n, tI_n, (tI_n)^2, \dotsc\} =
\{I_n, tI_n, t^2I_n, \dotsc\}$, and the map sends the $i$-th basis
element to the $i$-th basis element while acting as the identity on
coefficients, so it is bijective.
\end{proof}

\section{Graded ring-theoretic properties of
\texorpdfstring{$R[t;\delta]$}{R[t;delta]}}\label{EQAH2}

The graded ideals of $R[t;\delta]$ are controlled by the ideals of $R$ that arise from the leading coefficients of their elements. We isolate this device first, in Lemma \ref{REM5HP}, and then use it to prove the graded analogues of the simplicity, primeness, and Noetherian theorems.

For $f = \sum_{i=0}^n x_i t^i \in R[t;\delta]$ with $x_n \neq 0$,
write $\mathrm{lc}(f) := x_n$. For a graded ideal $\mathfrak{a}$
of $R[t;\delta]$ and $n \geq 0$, set
\[
  \mathcal{L}_n(\mathfrak{a}) :=
  \bigl\{\mathrm{lc}(f) : f \in \mathfrak{a},
  \deg_t(f) = n\bigr\} \cup \{0\}.
\]

\begin{lemma}\label{REM5HP}
For every graded ideal $\mathfrak{a}$ of $R[t;\delta]$ and every
$n \geq 0$, the set $\mathcal{L}_n(\mathfrak{a})$ is a
$\delta$-invariant graded ideal of $R$.
\end{lemma}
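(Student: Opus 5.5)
The plan is to verify three properties of $\mathcal{L}_n(\mathfrak{a})$ in turn: it is a two-sided ideal of $R$, it is graded, and it is $\delta$-invariant. Each property will be witnessed by an explicit element of $\mathfrak{a}$ whose leading coefficient is the desired one. Throughout I use that $\mathcal{L}_n(\mathfrak{a})$ coincides with the set of $t^n$-coefficients of elements of $\mathfrak{a}$ of $t$-degree at most $n$. Here a coefficient that vanishes corresponds to the adjoined $0$, and an element of degree $<n$ contributes $0$.

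\textbf{Ideal.} For $x=\mathrm{lc}(f)$ and $y=\mathrm{lc}(g)$ with $\deg_t f=\deg_t g=n$, the element $f-g$ lies in $\mathfrak{a}$, has degree $\le n$, and has $t^n$-coefficient $x-y$. For the two-sided property, take $r\in R$. Then $rf\in\mathfrak{a}$ has $t^n$-coefficient $rx$. By \eqref{REMNPFA}, $t^n r=rt^n+(\text{lower terms})$, so $fr=\sum x_i t^i r$ has $t^n$-coefficient $x_n r=xr$ and degree $\le n$. Hence $rx,\,xr\in\mathcal{L}_n(\mathfrak{a})$.

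\textbf{Graded.} Let $x=\mathrm{lc}(f)$ and fix $\sigma\in\Gamma$. Consider the homogeneous component $f_{\sigma\gamma^{n}}$ of $f$ for the grading \eqref{EXQBXX}. It lies in $\mathfrak{a}$ because $\mathfrak{a}$ is graded. Its $t$-degree is at most $n$, and by the explicit form of \eqref{EXQBXX} its $t^n$-coefficient is exactly $x_{\sigma}$. Thus $x_\sigma\in\mathcal{L}_n(\mathfrak{a})$ for every $\sigma$, which is precisely the gradedness of $\mathcal{L}_n(\mathfrak{a})$. This is the one place where the grading formula of Proposition \ref{EQYUGZ} is genuinely used.

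\textbf{$\delta$-invariance.} Since $t\in R[t;\delta]$, the commutator $tf-ft$ lies in $\mathfrak{a}$. Using $tx_i=x_it+\delta(x_i)$, we get $tf-ft=\sum_i\delta(x_i)t^i$. This has degree $\le n$ and $t^n$-coefficient $\delta(x_n)=\delta(x)$, so $\delta(x)\in\mathcal{L}_n(\mathfrak{a})$.

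I expect no serious obstacle. The only point needing care is the bookkeeping that allows the witness to have degree strictly less than $n$, which happens when a coefficient cancels, becomes $0$, or a component has a vanishing top term. Working throughout with the "degree $\le n$, $t^n$-coefficient" description handles this uniformly.
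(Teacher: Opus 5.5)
Your proposal is correct and follows essentially the same route as the paper: gradedness via the component $f_{\sigma\gamma^n}$ read off from \eqref{EXQBXX}, two-sidedness via $rf$ and $fr$ using \eqref{REMNPFA}, and $\delta$-invariance via $tf-ft=\sum_i\delta(x_i)t^i$. Your reformulation of $\mathcal{L}_n(\mathfrak{a})$ as the set of $t^n$-coefficients of elements of $\mathfrak{a}$ of $t$-degree at most $n$ handles cancellation cleanly, and it also lets you check additive closure explicitly, which the paper leaves implicit.
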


\begin{proof}
Let $x \in \mathcal{L}_n(\mathfrak{a})$, witnessed by
$f = \sum_{i=0}^n c_i t^i \in \mathfrak{a}$ with $c_n = x$. By
\eqref{EXQBXX}, the $\tau$-component of $f$ is
$f_\tau = \sum_{i=0}^n (c_i)_{\tau\gamma^{-i}}\,t^i$, and
$f_\tau \in \mathfrak{a}$ for every $\tau$ since $\mathfrak{a}$ is
graded. For each $\sigma \in \Gamma_R$ with $x_\sigma \neq 0$, setting
$\tau = \sigma\gamma^n$ gives $f_{\sigma\gamma^n} \in \mathfrak{a}$ of
$t$-degree $n$ with $t^n$-coefficient $(c_n)_{\sigma\gamma^n\gamma^{-n}}
= x_\sigma$, so $x_\sigma \in \mathcal{L}_n(\mathfrak{a})$. This shows
$\mathcal{L}_n(\mathfrak{a})$ is graded.

For $r \in R$, the product $rf = rx\,t^n + r\cdot(\text{lower terms of
}f)$ again has $t$-degree $n$ and leading coefficient $rx$ whenever
$rx \neq 0$, and $rf \in \mathfrak{a}$ since $\mathfrak{a}$ is an ideal
of $R[t;\delta]$; hence $rx \in \mathcal{L}_n(\mathfrak{a})$. On the
other side, \eqref{REMNPFA} gives
$t^n r = \sum_{k=0}^n \binom{n}{k}\delta^k(r)\,t^{n-k}$, so
$fr = xr\,t^n + (\text{terms of $t$-degree} < n)$, the correction
terms $x\,\delta^k(r)\,t^{n-k}$ for $k\geq 1$ all lying strictly below
degree $n$; since $fr \in \mathfrak{a}$, this gives
$xr \in \mathcal{L}_n(\mathfrak{a})$. Hence $\mathcal{L}_n(\mathfrak{a})$
is a two-sided ideal of $R$.

Finally, both $tf$ and $ft$ lie in $\mathfrak{a}$. Writing
$f = \sum_{i=0}^n c_i t^i$ and using $tc_i = c_it + \delta(c_i)$ for
each $i$, we get
$tf = \sum_{i=0}^n c_i t^{i+1} + \sum_{i=0}^n \delta(c_i)\,t^i$, while
$ft = \sum_{i=0}^n c_i t^{i+1}$; subtracting,
$tf - ft = \sum_{i=0}^n \delta(c_i)\,t^i = \delta(x)\,t^n +
(\text{terms of $t$-degree} < n)$, an element of $\mathfrak{a}$.
This exhibits $\delta(x)$ as the leading coefficient of an element of
$\mathfrak{a}$ of $t$-degree $n$, so $\delta(x) \in
\mathcal{L}_n(\mathfrak{a})$, and $\mathcal{L}_n(\mathfrak{a})$ is
$\delta$-invariant.
\end{proof}

\subsection{gr-Simplicity}
Throughout this subsection, $R$ is a $\Gamma$-graded ring,
$\gamma \in C_\Gamma(\Gamma_R)$, and $\delta \in \mathrm{Der}_\gamma(R)$.
We first treat the characteristic-zero case, where the result mirrors
Jordan's theorem, and then turn to the characteristic-free criterion of
\"{O}inert and Silvestrov.

\begin{lemma}\label{TORSFREE}
If $R$ is $\delta$-gr-simple and $\operatorname{char}R=0$, then $R$ is
torsion-free as a $\mathbb{Z}$-module and a $\mathbb{Q}$-algebra.
\end{lemma}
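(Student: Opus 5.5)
Fix an integer $n \geq 1$. The plan is to show that the two subsets
\[
  nR = \{nx : x \in R\}, \qquad \operatorname{Ann}_R(n) := \{x \in R : nx = 0\}
\]
are $\delta$-invariant graded ideals of $R$. Then $\delta$-gr-simplicity will force each to be $0$ or $R$, and $\operatorname{char} R = 0$ will decide which case occurs.

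\emph{Step 1: both sets are $\delta$-invariant graded ideals.} Multiplication by the integer $n$ is central, so both sets are two-sided ideals. Since $\delta$ is additive, $\delta(nx) = n\delta(x)$, which gives $\delta(nR) \subseteq nR$. Likewise, if $nx = 0$ then $n\delta(x) = \delta(nx) = 0$, so $\operatorname{Ann}_R(n)$ is $\delta$-invariant. For gradedness, multiplication by $n$ preserves each component $R_\sigma$. Hence $(nx)_\sigma = n x_\sigma$, so the components of any element of $nR$ lie in $nR$. Similarly, $nx = 0$ gives $n x_\sigma = (nx)_\sigma = 0$ for every $\sigma$, so $\operatorname{Ann}_R(n)$ is graded as well.

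\emph{Step 2: torsion-freeness.} Since $1 \in R$ and $n \cdot 1 \neq 0$ by $\operatorname{char} R = 0$, we have $1 \notin \operatorname{Ann}_R(n)$. So $\operatorname{Ann}_R(n)$ is a proper $\delta$-invariant graded ideal, and $\delta$-gr-simplicity makes it $0$. As $n \geq 1$ was arbitrary, $R$ is torsion-free as a $\mathbb{Z}$-module.

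\emph{Step 3: the $\mathbb{Q}$-algebra structure.} The ideal $nR$ contains $n \cdot 1 \neq 0$, so $nR = R$. Hence $1 = nu$ for some $u \in R$. Because $n \cdot 1$ is central, $nu = (n\cdot 1)u = u(n\cdot 1)$, so $u$ is a two-sided inverse of $n \cdot 1$, and this inverse is central.

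Unique divisibility by $n$ follows from Step 2, since multiplication by $n$ is injective, together with $nR = R$. Thus the ring map $\mathbb{Z} \to Z(R)$ extends uniquely to $\mathbb{Q} \to Z(R)$, sending $1/n \mapsto u$, and $R$ is a $\mathbb{Q}$-algebra.

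The main point requiring care is Step 1: checking that these ideals are graded and $\delta$-invariant. All of it rests on the fact that integer multiples commute with both the grading projections and $\delta$.
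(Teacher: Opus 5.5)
Your proof is correct and follows the same route as the paper. The paper annihilates the full torsion ideal $\{x \in R : nx = 0 \text{ for some } n \geq 1\}$ in a single step, rather than each $\operatorname{Ann}_R(n)$ separately, and then uses the nonzero $\delta$-invariant graded ideal $nR$ to make $n\cdot 1_R$ a unit, exactly as you do.
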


\begin{proof}
Set $\mathfrak{a} := \{x \in R : nx = 0 \text{ for some }
n \geq 1\}$. This is a $\delta$-invariant graded ideal:
two-sided since $n(rx) = r(nx) = 0$; graded since
$nx = 0$ implies $nx_\sigma = 0$ for each component;
$\delta$-invariant since $n\,\delta(x) = \delta(nx) = 0$.
As $\operatorname{char} R = 0$, one has $1_R \notin \mathfrak{a}$, so $\delta$-gr-simplicity forces $\mathfrak{a} = 0$, and $R$ is torsion-free.

For the second assertion, fix $n\ge1$ and consider
$nR:=(n\cdot1_R)R$. Since $n\cdot1_R\in R_e$ is central and
homogeneous, $nR$ is a two-sided graded ideal, and
$\delta(nr)=n\,\delta(r)$ shows it is $\delta$-invariant. As $\operatorname{char}R=0$, $n\cdot1_R\neq0$, so $nR\neq0$; hence $nR=R$ by $\delta$-gr-simplicity, and $n\cdot1_R$ is a unit of
$R$. As $n$ was arbitrary, $R$ is a $\mathbb{Q}$-algebra.
\end{proof}

\begin{lemma}\label{LEMORE110}
Suppose $R$ is gr-simple of $\charr R=0$ and $\delta$ is 
$\gamma$-outer. If $s = \sum_{i=0}^n x_i t^i \in
C_{R[t;\delta]}(R)$ and $(x_n)_\sigma \neq 0$ for some
$\sigma \in C_\Gamma(\Gamma_R)$, then $n = 0$ and
$s \in Z(R)$.
\end{lemma}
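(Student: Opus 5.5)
The plan is the classical Jordan argument, run on a single homogeneous component of $s$ so that everything stays homogeneous. First I reduce to that component. Set $c := (x_n)_\sigma \neq 0$ and $\tau := \sigma\gamma^n$. Since $\sigma,\gamma \in C_\Gamma(\Gamma_R)$, which is a subgroup by Remark \ref{REMHY8IT}, $\tau$ also centralizes $\Gamma_R$.

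Fix a homogeneous $x \in R_\rho$ with $\rho \in \Gamma_R$ and take the degree-$\rho\tau$ component of $0 = xs - sx$ in the grading \eqref{EXQBXX}. The term $x s_{\tau'}$ has degree $\rho\tau'$, and $s_{\tau'}x$ has degree $\tau'\rho$. The first equals $\rho\tau$ only for $\tau' = \tau$. The second equals $\rho\tau = \tau\rho$ only for $\tau' = \tau$. Hence $x s_\tau = s_\tau x$ for all homogeneous $x$, and so $s_\tau \in C_{R[t;\delta]}(R)$.

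By \eqref{EXQBXX}, $s_\tau = \sum_{i=0}^n (x_i)_{\tau\gamma^{-i}} t^i$. Its $t^n$-coefficient is $(x_n)_\sigma = c \neq 0$, so $s_\tau$ has $t$-degree exactly $n$. Write $y := (x_{n-1})_{\sigma\gamma}$ for its $t^{n-1}$-coefficient when $n \geq 1$.

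Next I compare coefficients in $x s_\tau = s_\tau x$ for $x \in R$, using \eqref{REMNPFA}. In $t$-degree $n$ this gives $xc = cx$, so $c \in Z(R)$. Because $c$ is homogeneous and central, $cR = Rc$ is a nonzero graded two-sided ideal. Gr-simplicity then forces $cR = R$, so $c$ is a central unit with $c^{-1} \in Z(R) \cap R_{\sigma^{-1}}$.

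Suppose for contradiction that $n \geq 1$. The coefficient of $t^{n-1}$ gives
\[
xy = yx + n\,c\,\delta(x) \qquad (x \in R).
\]
A gr-simple ring is $\delta$-gr-simple, so Lemma \ref{TORSFREE} applies and $R$ is a $\mathbb{Q}$-algebra; in particular $n$ is invertible. Put $r := -(nc)^{-1}y$. Then $\delta(x) = rx - xr$, so $\delta = \delta_r$.

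It remains to check the degree of $r$. We have $(nc)^{-1} \in R_{\sigma^{-1}}$ and $y \in R_{\sigma\gamma}$, so $r \in R_{\sigma^{-1}\sigma\gamma} = R_\gamma$. Thus $\delta$ is $\gamma$-inner, contradicting the hypothesis, and therefore $n = 0$.

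With $n = 0$ we have $s = x_0 \in R$, and $s$ commutes with all of $R$, so $s \in Z(R)$. The only delicate point is the first step, isolating $s_\tau$ inside the centralizer. That is exactly where the hypothesis $\sigma \in C_\Gamma(\Gamma_R)$ is used. Without it the components $\rho\tau'$ and $\tau'\rho$ could mix, and the centralizer need not be graded. The same hypothesis is used again to compute $\deg r = \gamma$.
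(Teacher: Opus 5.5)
Your proof is correct and follows the paper's argument. The leading homogeneous coefficient $(x_n)_\sigma$ is central, hence a unit by gr-simplicity; the $t^{n-1}$-coefficient identity then makes $n\delta$ inner via an element of $R_\gamma$, and Lemma \ref{TORSFREE} lets you divide by $n$. The only difference is organizational: you first show that the whole component $s_{\sigma\gamma^n}$ lies in $C_{R[t;\delta]}(R)$ and then compare coefficients, whereas the paper compares coefficients of $s$ itself and extracts the $R_{\sigma\tau}$- and $R_{\sigma\tau\gamma}$-components afterwards.
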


\begin{proof}
Suppose $n > 0$. Comparing $t^n$-coefficients in $cs = sc$, for $c \in \mathcal{H}(R)$, gives $x_n \in Z(R)$. Now fix $c \in R_\tau$ and
compare the $R_{\sigma\tau}$-component of $x_n c = cx_n$: this gives $(x_n)_\sigma c = c(x_n)_{\tau^{-1}\sigma\tau} = c(x_n)_\sigma$, since
$\sigma \in C_\Gamma(\Gamma_R)$, so $(x_n)_\sigma \in Z(R)$. The ideal
$(x_n)_\sigma R$ is graded and nonzero, so gr-simplicity makes $(x_n)_\sigma$ a unit, with $(x_n)_\sigma^{-1} \in R_{\sigma^{-1}}$.

Comparing $t^{n-1}$-coefficients in $cs = sc$ and taking the $R_{\sigma\tau\gamma}$-component now gives
\[
  c\,(x_{n-1})_{\sigma\gamma} - (x_{n-1})_{\sigma\gamma}\,c
  = n\,(x_n)_\sigma\,\delta(c).
\]
Left-multiplying by $(x_n)_\sigma^{-1}$ yields $n\delta = \delta_r$, with
$r := -(x_n)_\sigma^{-1}(x_{n-1})_{\sigma\gamma} \in
R_{\sigma^{-1}\sigma\gamma} = R_\gamma$. By Lemma \ref{TORSFREE}, $R$ is a
$\mathbb{Q}$-algebra, so $\delta = \delta_{r/n}$, contradicting the
$\gamma$-outerness of $\delta$. Hence $n = 0$ and $s = x_0 \in Z(R)$.
\end{proof}

\begin{proposition}\label{PRGRSIMP}
If $R$ is gr-simple of $\charr R=0$ and $\delta$ is  $\gamma$-outer, then $R[t;\delta]$ is gr-simple.
\end{proposition}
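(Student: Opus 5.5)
Let $\mathfrak{a}$ be a nonzero graded ideal of $R[t;\delta]$. The plan is to show $1 \in \mathfrak{a}$ by a minimal-degree argument combined with Lemma \ref{REM5HP} and Lemma \ref{LEMORE110}.

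\emph{Step 1: a homogeneous element of minimal $t$-degree with leading coefficient $1$.} Let $n$ be the least $t$-degree of a nonzero element of $\mathfrak{a}$. Since $\mathfrak{a}$ is graded, some homogeneous component of such an element is a nonzero homogeneous element of $\mathfrak{a}$ of $t$-degree at most $n$, hence exactly $n$. By Lemma \ref{REM5HP}, $\mathcal{L}_n(\mathfrak{a})$ is a nonzero graded ideal of $R$, so it equals $R$ by gr-simplicity of $R$. Hence $1 \in \mathcal{L}_n(\mathfrak{a})$: there is $f \in \mathfrak{a}$ with $\deg_t f = n$ and $\mathrm{lc}(f) = 1$. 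Replace $f$ by its component $f_{\gamma^n}$. By the computation in the proof of Lemma \ref{REM5HP}, this component is still in $\mathfrak{a}$, has $t$-degree $n$, and has leading coefficient $1_e = 1$. So we may take $s := f \in \mathfrak{a} \cap R[t;\delta]_{\gamma^n}$, homogeneous with $\mathrm{lc}(s) = 1$.

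\emph{Step 2: $s$ centralizes $R$.} For homogeneous $c \in R$, consider $cs - sc \in \mathfrak{a}$. Its $t^n$-coefficient is $c\cdot 1 - 1\cdot c = 0$, because the correction terms from \eqref{REMNPFA} in $sc$ only lower the $t$-degree. So $\deg_t(cs - sc) < n$. Moreover $cs - sc$ is homogeneous, since $\deg(cs) = \deg(c)\gamma^n = \gamma^n\deg(c) = \deg(sc)$ by Remark \ref{REMHY8IT}. Minimality of $n$ then forces $cs = sc$. By additivity, $s \in C_{R[t;\delta]}(R)$.

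\emph{Step 3: conclude.} The leading coefficient $x_n = 1$ has $(x_n)_e = 1 \neq 0$, and $e \in C_\Gamma(\Gamma_R)$. Lemma \ref{LEMORE110} therefore applies and gives $n = 0$. Then $s = 1 \in \mathfrak{a}$, so $\mathfrak{a} = R[t;\delta]$ and $R[t;\delta]$ is gr-simple.

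The step needing the most care is Step 1, specifically normalizing the leading coefficient to exactly $1$ inside a homogeneous element. This requires that taking the $\gamma^n$-component does not alter the lower-degree structure in a way that drops the $t$-degree, and the graded decomposition \eqref{EXQBXX} guarantees this. Step 2 also uses the centralizer condition so that $cs$ and $sc$ have the same degree, which is what allows minimality to apply to their difference.
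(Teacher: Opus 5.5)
Your proposal is correct and follows the paper's proof essentially step for step. Both arguments take $n$ minimal, use Lemma~\ref{REM5HP} and gr-simplicity to get $\mathcal{L}_n(\mathfrak{a})=R$, pass to the $\gamma^n$-component to obtain a monic element, use minimality to show it centralizes $R$, and then apply Lemma~\ref{LEMORE110} with $\sigma=e$ to force $n=0$. Your homogeneity check on $cs-sc$ in Step~2 is harmless but unnecessary, since $n$ is minimal over all nonzero elements of $\mathfrak{a}$.
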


\begin{proof}
Let $\mathfrak{a}$ be a nonzero graded ideal of $R[t;\delta]$, and let $n$ be the minimal $t$-degree of a nonzero element. By Lemma \ref{REM5HP}, $\mathcal{L}_n(\mathfrak{a})$ is a nonzero graded ideal of $R$, so gr-simplicity gives $\mathcal{L}_n(\mathfrak{a}) = R$; in particular, there is a monic $g \in \mathfrak{a}$ of $t$-degree $n$. The unique component of $g$ with nonzero $t^n$-coefficient is $f := g_{\gamma^n} \in \mathfrak{a}_{\gamma^n}$, monic of $t$-degree $n$;
write $f = t^n + x_{n-1}t^{n-1} + \cdots + x_0$.
For $c \in \mathcal{H}(R)$, \eqref{REMNPFA} gives
\[
  fc - cf = \bigl(n\,\delta(c) + x_{n-1}c - cx_{n-1}\bigr)t^{n-1} +
  (\text{lower terms}),
\]
which has $t$-degree at most $n-1$, so minimality forces $fc = cf$. Thus $f \in C_{R[t;\delta]}(R)$ with leading coefficient $1_R \in R_e$ and $e \in C_\Gamma(\Gamma_R)$, so Lemma \ref{LEMORE110} gives $n = 0$, whence $f = 1 \in \mathfrak{a}$.
\end{proof}

\begin{proposition}\label{EQ9ESQ}
If $R$ is $\delta$-gr-simple  of $\charr R=0$ and $\delta$ is 
$\gamma$-outer, then $R[t;\delta]$ is gr-simple.
\end{proposition}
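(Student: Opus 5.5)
The plan is to rerun the argument of Proposition \ref{PRGRSIMP}, replacing gr-simplicity of $R$ by $\delta$-gr-simplicity. The only place gr-simplicity was really used there is to make $\mathcal{L}_n(\mathfrak{a})$ equal to $R$. Lemma \ref{REM5HP} says $\mathcal{L}_n(\mathfrak{a})$ is $\delta$-invariant, so $\delta$-gr-simplicity suffices for that step. We then bypass Lemma \ref{LEMORE110}, which needs gr-simplicity to invert a leading coefficient, by working directly with a \emph{monic} element, whose leading coefficient $1_R$ is already a unit.

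Concretely, let $\mathfrak{a}\neq 0$ be a graded ideal of $R[t;\delta]$ and let $n$ be the minimal $t$-degree of a nonzero element of $\mathfrak{a}$. By Lemma \ref{REM5HP}, $\mathcal{L}_n(\mathfrak{a})$ is a nonzero $\delta$-invariant graded ideal of $R$, hence equals $R$. So there is $g\in\mathfrak{a}$ of $t$-degree $n$ with leading coefficient $1_R$.

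Since $1_R\in R_e$, the only homogeneous component of $g$ with nonzero $t^n$-coefficient is $f:=g_{\gamma^n}\in\mathfrak{a}$, and $f$ is again monic of degree $n$. By \eqref{EXQBXX}, its coefficient $x_{n-1}$ of $t^{n-1}$ lies in $R_{\gamma^n\gamma^{-(n-1)}}=R_\gamma$. For homogeneous $c\in R$, formula \eqref{REMNPFA} gives
\[
  fc-cf=\bigl(n\,\delta(c)+x_{n-1}c-cx_{n-1}\bigr)t^{n-1}+(\text{lower terms}),
\]
an element of $\mathfrak{a}$ of $t$-degree $<n$. Minimality of $n$ forces $fc=cf$, and in particular the $t^{n-1}$-coefficient vanishes.

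Suppose $n\ge 1$. The vanishing $t^{n-1}$-coefficient gives $n\,\delta(c)=cx_{n-1}-x_{n-1}c$ for every homogeneous $c$, and by additivity for every $c\in R$, so $n\delta=\delta_{-x_{n-1}}$. By Lemma \ref{TORSFREE}, $R$ is a $\mathbb{Q}$-algebra, and $n^{-1}\cdot 1_R$ is central and lies in $R_e$. Hence $r:=-n^{-1}x_{n-1}\in R_\gamma$ and $\delta=\delta_r$ is $\gamma$-inner, contradicting the hypothesis. Thus $n=0$, so $f$ is monic of degree $0$, that is $f=1\in\mathfrak{a}$, and $\mathfrak{a}=R[t;\delta]$.

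The main point to get right is the passage from $g$ to the homogeneous monic $f$, which is what places $x_{n-1}$ in $R_\gamma$ and makes the inner derivation a $\gamma$-inner one. The division by $n$ is the only use of $\operatorname{char}R=0$, and it is supplied by Lemma \ref{TORSFREE} under $\delta$-gr-simplicity alone. No torsion-freeness of $\Der_\gamma(R)/\mathrm{Inn}_\gamma(R)$ is needed.
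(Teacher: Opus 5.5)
Your proof is correct and follows the paper's argument essentially step for step. You use Lemma \ref{REM5HP} and $\delta$-gr-simplicity to obtain a monic element, pass to its $\gamma^n$-component so that $x_{n-1}\in R_\gamma$, use minimality of $n$ to force $fc=cf$, and then divide by $n$ via Lemma \ref{TORSFREE} to contradict $\gamma$-outerness. The only difference is that you make explicit the sign convention $n\delta=\delta_{-x_{n-1}}$ and the fact that $n^{-1}\cdot 1_R$ is central and lies in $R_e$, both of which the paper leaves implicit.
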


\begin{proof}
Let $\mathfrak{a}$ be a nonzero graded ideal of
$R[t;\delta]$, and let $n$ be the minimal $t$-degree of
a nonzero element. By Lemma \ref{REM5HP},
$\mathcal{L}_n(\mathfrak{a})$ is a nonzero $\delta$-invariant
graded ideal of $R$, so $\delta$-gr-simplicity gives
$1 \in \mathcal{L}_n(\mathfrak{a})$. Pick $g \in \mathfrak{a}$
with $\mathrm{lc}(g) = 1$ and $\deg_t(g) = n$; by
\eqref{EXQBXX}, its unique component with nonzero
$t^n$-coefficient is
$f := g_{\gamma^n} \in \mathfrak{a}_{\gamma^n}$,
monic of $t$-degree $n$; write
$f = t^n + x_{n-1}t^{n-1} + \cdots + x_0$.

For $c \in \mathcal{H}(R)$,
\[
  fc - cf =
  \bigl(n\,\delta(c) + x_{n-1}c - cx_{n-1}\bigr)t^{n-1}
  + (\text{lower terms}) \in \mathfrak{a},
\]
and minimality forces $fc = cf$, that is,
$n\,\delta(c) = cx_{n-1} - x_{n-1}c$ for every $c \in \mathcal{H}(R)$.
If $n > 0$: since $R$ is $\delta$-gr-simple of characteristic zero,
Lemma \ref{TORSFREE} shows $R$ is a $\mathbb{Q}$-algebra, so $n \cdot 1_R$ is invertible and $r := -x_{n-1} n^{-1} \in R_\gamma$ is well defined, with $\delta = \delta_r$, contradicting $\gamma$-outerness. Hence $n = 0$, $f = 1 \in \mathfrak{a}$, and $\mathfrak{a} = R[t;\delta]$.
\end{proof}

\begin{proposition}\label{LMQ7H}
If $R[t;\delta]$ is gr-simple of $\charr R=0$, then $R$ is $\delta$-gr-simple and $\delta$ is  $\gamma$-outer.
\end{proposition}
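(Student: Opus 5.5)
Two things must be shown: that $R$ is $\delta$-gr-simple, and that $\delta$ is $\gamma$-outer. Each is proved by contraposition, producing a proper nonzero graded ideal of $R[t;\delta]$.

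\emph{$\delta$-gr-simplicity.} Let $\mathfrak{b}$ be a $\delta$-invariant graded ideal of $R$ with $0\neq\mathfrak{b}\neq R$. Set $\mathfrak{B}:=\mathfrak{b}R[t;\delta]=\{\sum_i b_it^i : b_i\in\mathfrak{b}\}$.

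It is a right ideal trivially. For left closure we need $t\mathfrak{B}\subseteq\mathfrak{B}$ and $R\mathfrak{B}\subseteq\mathfrak{B}$. The first holds because $tb=bt+\delta(b)$ and $\mathfrak{b}$ is $\delta$-invariant. The second holds because $\mathfrak{b}$ is a two-sided ideal. So $\mathfrak{B}$ is a two-sided ideal.

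It is graded: by \eqref{EXQBXX}, the $\tau$-component of $\sum b_it^i$ is $\sum (b_i)_{\tau\gamma^{-i}}t^i$, and $\mathfrak{b}$ is graded. It is nonzero since $\mathfrak{b}\neq0$. It is proper because $\mathfrak{B}\cap R=\mathfrak{b}\neq R$, by freeness over $\{t^i\}$. This contradicts gr-simplicity.

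\emph{$\gamma$-outerness.} Suppose $\delta=\delta_r$ with $r\in R_\gamma$, so $\delta(x)=rx-xr$. Corollary \ref{Y2J8Z}, applied with $\delta_2=0$, gives $R[t;\delta]\cong_{\gr}R[t;0]=R[t]$, the ordinary polynomial ring with $\deg t=\gamma$. Concretely, $u:=t-r\in R[t;\delta]_\gamma$ commutes with $R$ and with $t$, hence is central.

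Consider the ideal $\mathfrak{c}:=u\,R[t;\delta]$. Since $u$ is central, it is two-sided, and since $u$ is homogeneous, it is graded. It is nonzero because $u\neq 0$.

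The remaining point is that it is proper. If $1=uh$, then under the isomorphism $1=t\,h'$ in $R[t]$. Comparing $t$-degrees using that $t$ is monic forces $h'=0$, which is impossible. Thus $\mathfrak{c}$ is a proper nonzero graded ideal, again a contradiction.

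Characteristic zero is not really needed for either direction; it is present only to match the statement. The main obstacle is the verification that $\mathfrak{B}$ is closed under left multiplication by $t^n$. I would handle this by \eqref{REMNPFA}, which expresses $t^n b$ as a sum of terms $\delta^k(b)t^{n-k}$, and by iterating the $\delta$-invariance of $\mathfrak{b}$.
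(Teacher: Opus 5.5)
Your proposal is correct and follows the paper's proof essentially step for step. For $\delta$-gr-simplicity you extend a proper $\delta$-invariant graded ideal $\mathfrak{b}$ to $\mathfrak{b}[t;\delta]$, and for $\gamma$-outerness you use the central homogeneous element $t-r$ to generate a proper nonzero graded ideal. The only difference is how you show $(t-r)R[t;\delta]$ is proper: you transport to $R[t]$ via Corollary \ref{Y2J8Z}, where $t-r\mapsto t$, whereas the paper argues directly that $\mathrm{lc}(t-r)=1_R$ forces $\deg_t((t-r)f)\geq 1$. You are also right that characteristic zero is never used, and the paper's argument does not use it either.
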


\begin{proof}
If $\mathfrak{a}$ is a proper nonzero
$\delta$-invariant graded ideal of $R$, then
$\mathfrak{a}[t;\delta] := \{\sum_i x_i t^i : x_i \in
\mathfrak{a}\}$ is a proper nonzero graded ideal of
$R[t;\delta]$: closure under left multiplication by $t$
uses $tx = xt + \delta(x)$ with $x, \delta(x) \in
\mathfrak{a}$, and gradedness follows from \eqref{EXQBXX}.
This contradicts gr-simplicity, so $R$ is $\delta$-gr-simple.

Suppose $\delta = \delta_r$ for some 
$r \in R_\gamma$. Then $\delta(r) = \delta_r(r) = 0$. The element
$w := t - r \in R[t;\delta]_\gamma$ satisfies
$wx - xw = \delta(x) - \delta_r(x) = 0$ for all
$x \in R$, and $wt - tw = \delta(r) = 0$, hence
$w \in Z(R[t;\delta])$. Since $\mathrm{lc}(w) = 1_R$, any nonzero $f$ satisfies
$\deg_t(wf) = 1 + \deg_t(f) \geq 1$, so
$1 \notin wR[t;\delta]$. The ideal $wR[t;\delta]$ is then
a proper nonzero graded ideal, contradicting gr-simplicity.
\end{proof}

\begin{corollary}\label{EQVNQI}
Suppose $\charr R=0$. Then  $R[t;\delta]$ is gr-simple if and only if $R$ is $\delta$-gr-simple and $\delta$ is  $\gamma$-outer.
\end{corollary}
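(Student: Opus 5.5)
The corollary follows by combining the two directions already established in this subsection, under the standing hypotheses $\gamma \in C_\Gamma(\Gamma_R)$, $\delta \in \Der_\gamma(R)$ and $\charr R = 0$.

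For the forward implication, suppose $R[t;\delta]$ is gr-simple. Because $\iota$ is an injective graded homomorphism and $\charr R = 0$, the characteristic hypothesis is available inside $R[t;\delta]$. Proposition \ref{LMQ7H} then applies directly and gives that $R$ is $\delta$-gr-simple and that $\delta$ is $\gamma$-outer. That proof builds the proper graded ideal $\mathfrak{a}[t;\delta]$ from a proper $\delta$-invariant graded ideal $\mathfrak{a}$. It also uses the central element $w = t - r$ whenever $\delta = \delta_r$ with $r \in R_\gamma$.

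For the reverse implication, suppose $R$ is $\delta$-gr-simple, $\charr R = 0$, and $\delta$ is $\gamma$-outer. Proposition \ref{EQ9ESQ} gives gr-simplicity of $R[t;\delta]$. Its proof takes a nonzero graded ideal $\mathfrak{a}$ and the minimal $t$-degree $n$ of a nonzero element. Lemma \ref{REM5HP} then yields a monic homogeneous $f \in \mathfrak{a}_{\gamma^n}$. Minimality forces $n\,\delta = \delta_{-x_{n-1}}$ on $\mathcal{H}(R)$, and hence on all of $R$ by additivity. If $n > 0$, Lemma \ref{TORSFREE} lets us divide by $n$, which contradicts $\gamma$-outerness. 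So $n = 0$ and $1 \in \mathfrak{a}$.

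There is no real obstacle. The only point to check is that the characteristic-zero hypothesis in the corollary matches the hypotheses of both propositions. In the reverse direction, Lemma \ref{TORSFREE} needs $\delta$-gr-simplicity together with $\charr R = 0$, and both are assumed. In the forward direction, Proposition \ref{LMQ7H} does not actually use characteristic zero, so nothing more is needed.
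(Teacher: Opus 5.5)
Your proposal is correct and matches the paper: the corollary is exactly the conjunction of Proposition~\ref{EQ9ESQ} (the reverse direction, using Lemma~\ref{REM5HP} and Lemma~\ref{TORSFREE}) and Proposition~\ref{LMQ7H} (the forward direction). Your remark that the forward direction never uses $\charr R = 0$ is also accurate.
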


\begin{example}\label{EX4P8}
Let $F$ be a field of characteristic zero, $\Gamma = D_\infty
= \langle a, s \mid s^2 = e, sas^{-1} = a^{-1}\rangle$,
and $R = M_2(F)[x,x^{-1}]$ with grading $R_{a^k} = M_2(F)x^k$
and $R_\tau = 0$ for $\tau \notin \langle a\rangle$.
Since every power of $a$ commutes with $a$ but no reflection does,
$C_\Gamma(\Gamma_R) = \langle a\rangle$, and $\gamma := a^{-1}$
lies in $C_\Gamma(\Gamma_R) \setminus Z(\Gamma)$.
Define $\delta(Mx^k) := k\,Mx^{k-1}$; this is a $\gamma$-derivation.

To see that $R$ is gr-simple, let $\mathfrak{a}$
be a nonzero graded ideal  of $R$ and pick a nonzero homogeneous element $Mx^k \in \mathfrak{a}$ with $M \in M_2(F)$, $M \neq 0$.
Since $M \neq 0$, the set $M_2(F)MM_2(F)$ is a nonzero
two-sided ideal of $M_2(F)$; simplicity of $M_2(F)$ then gives
$M_2(F)MM_2(F) = M_2(F)$, so every $N \in M_2(F)$ can be written
as $N = \sum_i A_iMB_i$ for some $A_i, B_i \in M_2(F)$.
Using centrality of $x$ in $R$, we compute, for any
$N \in M_2(F)$ and $m \in \mathbb{Z}$,
\[
  Nx^m = \sum_i A_i(Mx^k)(B_ix^{m-k}) \in R(Mx^k)R \subseteq \mathfrak{a},
\]
hence $\mathfrak{a} = R$ and $R$ is gr-simple;
in particular, $R$ is $\delta$-gr-simple. It remains to check that $\delta$ is  $\gamma$-outer.
Suppose $n\delta = \delta_r$ for some $n \in \mathbb{Z}$
and $r = Nx^{-1} \in R_\gamma$. Evaluating both sides on $Mx^k$
gives $nk\,Mx^{k-1} = [N,M]x^{k-1}$ for all $k \in \mathbb{Z}$;
taking $M = I_2$ and $k = 1$ yields $n \cdot 1_F = 0$,
contradicting $\mathrm{char}\,F = 0$.
By Corollary \ref{EQVNQI}, $R[t;\delta]$ is gr-simple.
\end{example}

We now turn to the characteristic-free centre criterion, the graded
analogue of the \"{O}inert--Silvestrov theorem. Throughout the rest of
this subsection, $R$ is $\delta$-gr-simple.

\begin{proposition}\label{THMPMY}
Every nonzero graded ideal of $R[t;\delta]$ contains a
nonzero element of $\mathcal{H}(R[t;\delta]) \cap
Z(R[t;\delta])$.
\end{proposition}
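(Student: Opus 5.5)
Fix a nonzero graded ideal $\mathfrak{a}$ of $R[t;\delta]$ and let $n$ be the minimal $t$-degree of a nonzero element of $\mathfrak{a}$. We mimic the proof of Proposition \ref{EQ9ESQ}, without the characteristic hypothesis. By Lemma \ref{REM5HP}, $\mathcal{L}_n(\mathfrak{a})$ is a nonzero $\delta$-invariant graded ideal of $R$. Since $R$ is $\delta$-gr-simple throughout this subsection, $\mathcal{L}_n(\mathfrak{a}) = R$, so $1 \in \mathcal{L}_n(\mathfrak{a})$. Choose $g \in \mathfrak{a}$ with $\deg_t(g) = n$ and $\mathrm{lc}(g) = 1$. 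By \eqref{EXQBXX}, the component $f := g_{\gamma^n}$ is the unique homogeneous component of $g$ carrying the $t^n$-coefficient $1 \in R_e$. Since $\mathfrak{a}$ is graded, $f \in \mathfrak{a}_{\gamma^n}$, and $f$ is monic of $t$-degree $n$; write $f = t^n + x_{n-1}t^{n-1} + \cdots + x_0$. Thus $f$ is a nonzero homogeneous element of $\mathfrak{a}$ of degree $\gamma^n$. It remains to show that $f$ is central.

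For commutation with $R$, take $c \in \mathcal{H}(R)$. The element $fc - cf$ lies in $\mathfrak{a}$. By \eqref{REMNPFA}, the $t^n$-coefficients of $fc$ and $cf$ are both $c$, so $\deg_t(fc - cf) < n$. Minimality of $n$ forces $fc = cf$, and by additivity $f \in C_{R[t;\delta]}(R)$. For commutation with $t$, consider $tf - ft = \sum_{i<n} \delta(x_i)\,t^i$, computed as in the proof of Lemma \ref{REM5HP}. This element lies in $\mathfrak{a}$ and has $t$-degree at most $n-1$ (the leading term contributes $\delta(1) = 0$). Minimality again gives $tf = ft$. Since $R[t;\delta]$ is generated as a ring by $R$ and $t$, we conclude $f \in Z(R[t;\delta])$.

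No step here is a real obstacle; the argument is the degree-minimality trick used twice. The only points needing care are two. First, the homogeneous component $f$ must really be monic of the same degree $n$. This holds because the $t^n$-coefficient $1$ lies entirely in $R_e$, so it falls into the single component indexed by $\tau = \gamma^n$, while the other components have $t$-degree $< n$ or are zero. Second, the $t^n$-coefficients must cancel exactly in $fc - cf$ and in $tf - ft$. This uses $\mathrm{lc}(f) = 1$ being central and $\delta(1) = 0$. In contrast with Proposition \ref{EQ9ESQ}, we never need to invert $n$, which is why no characteristic assumption enters.
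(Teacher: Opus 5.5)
Your proof is correct and is essentially the paper's argument: use Lemma \ref{REM5HP} and $\delta$-gr-simplicity to get a monic homogeneous $f \in \mathfrak{a}_{\gamma^n}$ of minimal $t$-degree $n$, then apply minimality twice, to $fc - cf$ and to $tf - ft$, to conclude that $f$ is central. You write out the steps that the paper abbreviates by pointing to the proof of Proposition \ref{EQ9ESQ}, and you correctly note that no inversion of $n$ is needed here.
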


\begin{proof}
Let $\mathfrak{a}$ be a nonzero graded ideal and $n$ the
minimal $t$-degree of a nonzero element. As in the proof of
Proposition \ref{EQ9ESQ}, $\delta$-gr-simplicity produces a
monic
\[
  f = t^n + x_{n-1}t^{n-1} + \cdots + x_0
\in \mathfrak{a}_{\gamma^n}
\]
with $fc = cf$ for all $c \in R$. Since
$ft - tf = -\sum_{i=0}^{n-1}\delta(x_i)\,t^i \in \mathfrak{a}$
has $t$-degree at most $n-1$, minimality gives $ft = tf$.
As $R$ and $t$ generate $R[t;\delta]$, $f \in Z(R[t;\delta])$.
\end{proof}

\begin{remark}
When $\Gamma$ is abelian, \cite[Proposition 2.1(2)]{ion1988rings}
gives this directly: every graded ideal of $R$ contains a
nonzero central homogeneous element.
\end{remark}

For the next result we assume in addition that $\Gamma$ is
orderable and that every nonzero homogeneous element of
$R[t;\delta]$ is regular; the centre $Z(R[t;\delta])$ is
then a graded subring by \cite[Proposition 2.1]{ion1988rings}.

\begin{proposition}\label{EXPVUX}
$R[t;\delta]$ is gr-simple if and only if $R$ is
$\delta$-gr-simple and $Z(R[t;\delta])$ is a graded field.
\end{proposition}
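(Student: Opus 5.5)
We prove the two implications separately; Proposition \ref{THMPMY} carries the main weight in the converse.

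\emph{Forward direction.} Assume $R[t;\delta]$ is gr-simple.

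\begin{enumerate}
\item[(a)] $R$ is $\delta$-gr-simple by the first paragraph of the proof of Proposition \ref{LMQ7H}. That argument never used characteristic zero: a proper nonzero $\delta$-invariant graded ideal $\mathfrak{a}$ of $R$ gives the proper nonzero graded ideal $\mathfrak{a}[t;\delta]$.
\item[(b)] $Z := Z(R[t;\delta])$ is a graded field. By the standing hypotheses and \cite[Proposition 2.1]{ion1988rings}, $Z$ is a graded subring. Take a nonzero homogeneous $z \in Z$ of degree $\tau$.
\item[(c)] Since $z$ is central and homogeneous, $zR[t;\delta] = R[t;\delta]z$ is a nonzero graded two-sided ideal, hence equals $R[t;\delta]$. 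So there is $w$ with $zw = 1$.
\item[(d)] Decompose $w$ into homogeneous components. Comparing degrees in $zw = 1$ shows we may replace $w$ by its component $w_{\tau^{-1}}$. Then $w$ is a two-sided inverse, since $z$ is central and $zw = wz = 1$.
\item[(e)] The inverse of a central element is central: for any $y$, $wy = wyzw = wzyw = yw$. Thus every nonzero homogeneous element of $Z$ is invertible in $Z$, and $Z$ is a graded field.
\end{enumerate}

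\emph{Converse.} Assume $R$ is $\delta$-gr-simple and $Z$ is a graded field. Let $\mathfrak{a}$ be a nonzero graded ideal of $R[t;\delta]$.

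\begin{enumerate}
\item[(a)] By Proposition \ref{THMPMY}, $\mathfrak{a}$ contains a nonzero element $f \in \mathcal{H}(R[t;\delta]) \cap Z$. By construction, $f$ is the monic element $f \in \mathfrak{a}_{\gamma^n}$.
\item[(b)] Since $Z$ is a graded field, $f$ has an inverse $f^{-1}$ in $Z$. Then $1 = f^{-1}f \in \mathfrak{a}$, so $\mathfrak{a} = R[t;\delta]$, and $R[t;\delta]$ is gr-simple.
\end{enumerate}

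\emph{Where the hypotheses enter.} The orderability of $\Gamma$ and the regularity of homogeneous elements are used only to guarantee that $Z$ is a graded subring, so that "graded field" makes sense. We expect no substantive obstacle.

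The only point needing care is the forward direction. There we must check that the inverse of a homogeneous central element is itself homogeneous and central, so that it lies in $Z$ as a homogeneous element. This is exactly what the degree comparison in step (d) and the computation in step (e) supply.
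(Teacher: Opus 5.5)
Your proof is correct and follows the paper's argument. The forward direction uses the first half of Proposition \ref{LMQ7H} (which, as you note, does not need characteristic zero) together with $zR[t;\delta]=R[t;\delta]$ for a nonzero homogeneous central $z$, and the converse uses Proposition \ref{THMPMY} exactly as the paper does; your steps (d)--(e), showing that the inverse is homogeneous of degree $\tau^{-1}$ and central, make explicit a detail the paper leaves implicit.
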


\begin{proof}
If $R[t;\delta]$ is gr-simple, Proposition \ref{LMQ7H}
gives $\delta$-gr-simplicity, and every nonzero homogeneous
$f \in Z(R[t;\delta])$ is invertible since $fR[t;\delta]
= R[t;\delta]$. Conversely, any nonzero graded ideal $\mathfrak{a}$
contains a nonzero central homogeneous element
$f$ by Proposition \ref{THMPMY}; since $Z(R[t;\delta])$
is a graded field, $f$ is invertible, so $1 \in \mathfrak{a}$.
\end{proof}

\begin{remark}
Jespers \cite{jespers1993simple} proved that a ring graded
by a hypercentral group is simple if and only if it is
gr-simple with a field centre. Every torsion-free nilpotent
group is hypercentral and bi-orderable
\cite{malcev1951fullordering}. Hence, under those hypotheses
on $\Gamma$ and with every nonzero homogeneous element of
$R[t;\delta]$ regular, Proposition \ref{EXPVUX} and
Jespers' theorem together give:
\begin{center}
 $R[t;\delta]$ is simple if and only if $R$ is $\delta$-gr-simple and $Z(R[t;\delta])$ is a field.
\end{center}
\end{remark}

\begin{corollary}\label{YP8TW}
\begin{itemize}
\item[(1)] Conjugate $\gamma$-derivations yield gr-simple
  $R[t;\delta_1]$ if and only if they yield gr-simple
  $R[t;\delta_2]$.
\item[(2)] If $\delta = \delta_r + \delta_2$ with $r \in R_\gamma$,
  then $R[t;\delta]$ is gr-simple if and only if
  $R[t;\delta_2]$ is.
\item[(3)] For $T$ graded, $\delta \in \Der_\gamma(T)$, and
  $S := M_n(T)$ with entrywise $\hat\delta$, the ring
  $S[t;\hat\delta]$ is gr-simple if and only if $T[t;\delta]$ is.
\end{itemize}
\end{corollary}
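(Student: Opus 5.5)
The plan is to deduce all three parts from the structural graded isomorphisms of Section \ref{LEMJN86}, using only the fact that gr-simplicity is invariant under graded isomorphism. If $\phi\colon A \to B$ is a graded isomorphism, then $\phi(A_\sigma) = B_\sigma$ for every $\sigma$. Hence an ideal $\mathfrak{a}$ of $A$ is graded if and only if $\phi(\mathfrak{a})$ is graded: $\phi$ carries the homogeneous components of an element of $\mathfrak{a}$ to those of its image, and $\phi^{-1}$ is graded too. The map $\mathfrak{a} \mapsto \phi(\mathfrak{a})$ is therefore a bijection between the graded ideals of $A$ and those of $B$. It preserves $0$ and the whole ring, so $A$ is gr-simple if and only if $B$ is.

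Part (1) then follows from Corollary \ref{FIGC7ZKG}. Given $\delta_2 = \psi\circ\delta_1\circ\psi^{-1}$ with $\psi \in \mathrm{Aut}_{\gr}(R)$, that corollary gives $R[t;\delta_1] \cong_{\gr} R[t;\delta_2]$. Part (2) follows in the same way from Corollary \ref{Y2J8Z}. That corollary needs $\delta_2 \in \Der_\gamma(R)$ and $r \in R_\gamma$; we have $\delta_2 = \delta - \delta_r$, and $\delta_r \in \Der_\gamma(R)$ because $\gamma \in C_\Gamma(\Gamma_R)$ gives $rR_\sigma,\, R_\sigma r \subseteq R_{\sigma\gamma}$.

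Part (3) uses Corollary \ref{COR05933}, which gives $S[t;\hat\delta] \cong_{\gr} M_n(T[t;\delta])$. It therefore suffices to show that, for a $\Gamma$-graded ring $B$ with the standard matrix grading $M_n(B)_\sigma = M_n(B_\sigma)$, the ring $M_n(B)$ is gr-simple if and only if $B$ is. The approach is the classical correspondence: every ideal of $M_n(B)$ has the form $M_n(\mathfrak{b})$ for a unique ideal $\mathfrak{b}$ of $B$, recovered as the set of $(1,1)$-entries, equivalently via $E_{1i}XE_{j1}$. One then checks that $M_n(\mathfrak{b})$ is graded exactly when $\mathfrak{b}$ is. The matrix units $E_{ij}$ lie in degree $e$, and the homogeneous components of a matrix are taken entrywise, so both directions follow by comparing components entrywise. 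This restricts the bijection to graded ideals, and part (3) follows.

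The main, if modest, obstacle is this last step: one must confirm that the entry-extraction maps $X \mapsto E_{1i}XE_{j1}$ preserve degrees, which holds because $E_{ij} \in M_n(B)_e$. The remaining steps are formal consequences of the corollaries already proved.
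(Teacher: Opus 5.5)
Your proposal is correct and follows the paper's proof. Parts (1) and (2) come from Corollaries \ref{FIGC7ZKG} and \ref{Y2J8Z} plus the invariance of gr-simplicity under graded isomorphism, and part (3) comes from Corollary \ref{COR05933} together with the correspondence $\mathfrak{b}\mapsto M_n(\mathfrak{b})$ between graded ideals. Your extra checks, that $\delta_2=\delta-\delta_r$ lies in $\Der_\gamma(R)$ and that $M_n(\mathfrak{b})$ is graded exactly when $\mathfrak{b}$ is, fill in details the paper leaves implicit.
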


\begin{proof}
Parts (1) and (2) follow from Corollaries \ref{FIGC7ZKG}
and \ref{Y2J8Z}, since gr-simplicity is a graded isomorphism
invariant. For (3), Corollary \ref{COR05933} gives
$M_n(T)[t;\hat\delta] \cong_{\mathrm{gr}} M_n(T[t;\delta])$,
and the lattice isomorphism $\mathfrak{b} \mapsto M_n(\mathfrak{b})$
between graded ideals of $T[t;\delta]$ and of $M_n(T[t;\delta])$
preserves gr-simplicity.
\end{proof}

\begin{proposition}\label{EQTNIVY}
If $R$ is gr-simple of characteristic zero and
$R' := R[t;0]$ with $\deg(t) = e$, then $R'[s;\,d/dt]$
is gr-simple.
\end{proposition}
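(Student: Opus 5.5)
The plan is to apply Corollary \ref{EQVNQI} (equivalently Proposition \ref{EQ9ESQ}) to the base ring $R' := R[t;0]$ with the derivation $\delta' := d/dt$. Three things must be checked: $R'$ is graded with $\delta'$ a $\gamma$-derivation for some $\gamma \in C_\Gamma(\Gamma_{R'})$; $R'$ is $\delta'$-gr-simple of characteristic zero; and $\delta'$ is $\gamma$-outer.

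\textbf{Grading.} With $\deg(t) = e$, formula \eqref{EXQBXX} (for $\delta = 0$, $\gamma = e$) gives $R'_\tau = R_\tau[t]$. Hence $\Gamma_{R'} = \Gamma_R$. The map $d/dt$ sends $x t^i \mapsto i x t^{i-1}$, so it preserves each $R_\tau[t]$. Thus $d/dt \in \Der_e(R')$ with $e \in C_\Gamma(\Gamma_{R'})$, and Proposition \ref{EQYUGZ} makes $R'[s;d/dt]$ a graded ring with $\deg(s) = e$. Since $R \subseteq R'$, we also have $\charr R' = 0$.

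\textbf{$\delta'$-gr-simplicity.} Let $\mathfrak{a}$ be a nonzero $d/dt$-invariant graded ideal of $R'$, and pick a nonzero homogeneous $f = \sum_{i \le m} x_i t^i$ in it, with $x_i \in R_\tau$ and $x_m \neq 0$.
\begin{itemize}
\item Applying $(d/dt)^m$ gives $m!\,x_m \in \mathfrak{a} \cap R_\tau$.
\item By Lemma \ref{TORSFREE}, or simply because $R$ is gr-simple of characteristic zero (so $n \cdot 1_R$ generates the graded ideal $R$ and is therefore a unit), $m!$ is invertible. Hence $x_m \in \mathfrak{a}$.
\item So $\mathfrak{a} \cap R$ is a nonzero graded ideal of $R$. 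Gr-simplicity gives $1 \in \mathfrak{a}$, and therefore $\mathfrak{a} = R'$.
\end{itemize}

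\textbf{$e$-outerness.} This is the main point to verify. Suppose $d/dt = \delta_r$ for some $r \in R'_e = R_e[t]$. Since $t$ is central in $R' = R[t]$, we get $\delta_r(t) = rt - tr = 0$. But $(d/dt)(t) = 1 \neq 0$, a contradiction. So $d/dt$ is $e$-outer.

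Corollary \ref{EQVNQI} then gives that $R'[s;d/dt]$ is gr-simple. The only delicate step is the invertibility of integers in $R$, which is needed to strip the factor $m!$. That step is handled above.
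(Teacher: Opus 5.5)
Your argument is correct, but it is not the proof the paper gives for this proposition. It is instead exactly the alternative the paper records in the remark immediately after the proof: outerness of $d/dt$ from centrality of $t$, $(d/dt)$-gr-simplicity of $R'$ via $(d/dt)^m$, and then Corollary~\ref{EQVNQI}. The paper's own proof works directly in $R'[s;d/dt]$. It writes a nonzero homogeneous $f\in\mathfrak a$ as $\sum_{i\le n}t^ip_i(s)$ and applies $\mathrm{ad}_s^n$ to isolate $n!\,p_n(s)$. It then applies $\mathrm{ad}_t^m$ to isolate $(-1)^m m!\,n!\,x_{nm}$, a nonzero element of $\mathfrak a\cap R$, and finishes with gr-simplicity of $R$. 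That computation is self-contained: it uses only the relations $st^k=t^ks+kt^{k-1}$ and $ts^j=s^jt-js^{j-1}$, not the leading-coefficient ideals of Lemma~\ref{REM5HP} or the minimal-degree argument of Proposition~\ref{EQ9ESQ}. It also needs only torsion-freeness of $R$, never division by integers. Your reduction is shorter and more conceptual, since it shows the proposition is just the characteristic-zero Jordan-type criterion applied to the pair $(R',d/dt)$. The cost is reliance on that machinery, including the fact that $R'$ is a $\mathbb{Q}$-algebra, which Proposition~\ref{EQ9ESQ} uses to divide by $n$ and which is automatic here by Lemma~\ref{TORSFREE}. One minor point: the step you single out as delicate is not actually needed. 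By torsion-freeness, $m!\,x_m$ is already a nonzero element of $\mathfrak a\cap R$, so there is no need to strip the factor $m!$. Your justification that $m!$ is a unit is nonetheless correct.
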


\begin{proof}
The ring $R'[s;d/dt]$ is generated over $R$ by $t$ and $s$
subject to $st = ts + 1$ and $[s,x] = [t,x] = 0$ for all
$x \in R$. Induction on $k \geq 1$ gives
\begin{equation}\label{COMM1}
  st^k = t^ks + kt^{k-1} \quad (k \geq 1);
\end{equation}
the base case $k=1$ is $st = ts+1$, the defining relation; and
for $k \geq 1$,
$st^{k+1} = (st)t^k = (ts+1)t^k = t(st^k) + t^k
= t(t^ks + kt^{k-1}) + t^k = t^{k+1}s + (k+1)t^k$,
using $\eqref{COMM1}$ at $k$, where $t^{k-1}$ is well defined since
$k \geq 1$. Since $[s,x] = 0$ for $x \in R$, equation \eqref{COMM1}
gives
\begin{equation}\label{COMM1X}
  [s, t^kx] = kt^{k-1}x \quad (k \geq 1,\ x \in R),
\end{equation}
while for $k=0$ we record separately that $[s,x] = 0$ for $x \in R$,
by hypothesis. Likewise, induction on $j \geq 1$ gives
\begin{equation}\label{COMM2}
  ts^j = s^jt - js^{j-1} \quad (j \geq 1);
\end{equation}
the base case $j=1$ is $ts = st - 1$, equivalent to the defining
relation; and for $j \geq 1$,
$ts^{j+1} = (ts)s^j = (st-1)s^j = s(ts^j) - s^j
= s(s^jt - js^{j-1}) - s^j = s^{j+1}t - (j+1)s^j$,
where $s^{j-1}$ is well defined since $j \geq 1$. Since $[t,x] = 0$
for $x \in R$, equation \eqref{COMM2} gives
\begin{equation}\label{COMM2X}
  [t, s^jx] = -js^{j-1}x \quad (j \geq 1,\ x \in R),
\end{equation}
while for $j=0$ we record separately that $[t,x] = 0$ for $x \in R$.

Let $\mathfrak{a}$ be a nonzero graded ideal of $R'[s;d/dt]$.
Since $R'[s;d/dt] = R[t][s;d/dt]$ is free over $R$ on the
basis $\{t^is^j\}_{i,j \geq 0}$, pick a nonzero homogeneous
$f \in \mathfrak{a}$ and write
\[
  f = \sum_{i=0}^{n} t^i p_i(s), \quad
  p_i(s) = \sum_{j=0}^{N} x_{ij} s^j \in R[s],
\]
with $p_n \neq 0$ of degree $m$, so that $x_{nm} \neq 0$.

\emph{Claim 1.} For every $i \geq 0$ and $p(s) \in R[s]$,
\begin{equation}\label{ADS}
  \mathrm{ad}_s^k\bigl(t^i p(s)\bigr) =
  \begin{cases}
    i^{\underline{k}}\, t^{i-k}p(s) & 0 \leq k \leq i, \\
    0 & k > i,
  \end{cases}
\end{equation}
where $i^{\underline{k}} := i(i-1)\cdots(i-k+1)$.

Since $[s,x]=0$ for $x \in R$ and $[s,s]=0$,
the bracket $\mathrm{ad}_s$ is a derivation of the ring, so
$[s,p(s)] = 0$ for every $p(s) \in R[s]$; this settles the case
$i=0$ for all $k \geq 1$ at once, since then $t^ip(s) = p(s)$ and
$\mathrm{ad}_s(p(s)) = 0$, hence $\mathrm{ad}_s^k(p(s)) = 0$ for
every $k \geq 1$. For $i \geq 1$, induction on $k$ with
$0 \leq k \leq i$ proves the formula: the case $k=0$ is trivial,
and for $0 \leq k < i$,
\[
  \mathrm{ad}_s^{k+1}(t^ip(s)) =
  \mathrm{ad}_s\bigl(i^{\underline{k}}t^{i-k}p(s)\bigr)
  = i^{\underline{k}}\,[s,t^{i-k}p(s)]
  = i^{\underline{k}}(i-k)\,t^{i-k-1}p(s) = i^{\underline{k+1}}t^{i-k-1}p(s),
\]
using \eqref{COMM1X} at exponent $i-k \geq 1$, which is licit since
$k < i$. At $k=i$ this gives
$\mathrm{ad}_s^i(t^ip(s)) = i^{\underline{i}}\,p(s) = i!\,p(s)$, and one
further application of $\mathrm{ad}_s$ annihilates it by the
$i=0$ case already proved, giving $\mathrm{ad}_s^{i+1}(t^ip(s)) = 0$;
applying $\mathrm{ad}_s$ repeatedly then gives $0$ for all $k > i$.
This proves Claim~1.

Taking $p(s) = p_i(s)$ and $k = n$ in Claim 1: for $i < n$,
$\mathrm{ad}_s^n(t^ip_i(s)) = 0$; for $i = n$,
$\mathrm{ad}_s^n(t^np_n(s)) = n!\,p_n(s)$. Hence
\begin{equation}\label{STEP1}
  n!\,p_n(s) = \mathrm{ad}_s^n(f) \in \mathfrak{a}.
\end{equation}

\emph{Claim 2.} For every $j \geq 0$ and $x \in R$,
\begin{equation}\label{ADT}
  \mathrm{ad}_t^k(x s^j) =
  \begin{cases}
    (-1)^k j^{\underline{k}}\, s^{j-k} x & 0 \leq k \leq j, \\
    0 & k > j.
  \end{cases}
\end{equation}

Symmetric to Claim 1, using $[t,x]=0$ for
$x \in R$ (the case $j=0$, valid for all $k \geq 1$ at once) and
\eqref{COMM2X} for the inductive step from $j-k \geq 1$ down to
$j-k-1$.

Taking $x = x_{nj}$ and $k=m$ in Claim 2: for $j < m$,
$\mathrm{ad}_t^m(x_{nj}s^j) = 0$; for $j = m$,
$\mathrm{ad}_t^m(x_{nm}s^m) = (-1)^m m!\,x_{nm}$. Applying
$\mathrm{ad}_t^m$ to \eqref{STEP1},
\begin{equation}\label{STEP2}
  (-1)^m m!\,n!\,x_{nm} = \mathrm{ad}_t^m(n!\,p_n(s))
  \in \mathfrak{a}.
\end{equation}
Since $\mathrm{char}\, R = 0$ and $R$ is gr-simple, $R$ is torsion-free by Lemma \ref{TORSFREE}, so $(-1)^m m!\, n!\, x_{nm}$ is a nonzero element of $\mathfrak{a} \cap R$. Gr-simplicity of $R$ gives $1_R \in \mathfrak{a}$.
\end{proof}

\begin{remark}
The derivation $d/dt$ is $e$-outer on $R'$: if $d/dt=\delta_q$ for some $q\in R'_e=R_e[t]$, then since $t$ is central in $R'$ we have $\delta_q(t)=qt-tq=0$, whereas $(d/dt)(t)=1_R\neq 0$; evaluating at $t$ yields $1_R=0$, a contradiction. Moreover, $R'$ is $(d/dt)$-gr-simple: for any nonzero $(d/dt)$-invariant
graded ideal $\mathfrak{a}$, pick a nonzero homogeneous
$g = \sum_{i=0}^n a_it^i \in \mathfrak{a}$ with
$a_n \neq 0$; then $(d/dt)^n(g) = n!\,a_n \in
\mathfrak{a} \cap R$ is nonzero since $\mathrm{char}\,R = 0$ and $R$ is torsion-free by Lemma \ref{TORSFREE}, so
gr-simplicity of $R$ gives $1_R \in \mathfrak{a}$.
Corollary \ref{EQVNQI} then yields a second proof of
Proposition \ref{EQTNIVY}.
\end{remark}

\subsection{gr-Primality}
Throughout this subsection, $R$ is a $\Gamma$-graded ring,
$\gamma \in C_\Gamma(\Gamma_R)$, and $\delta \in \mathrm{Der}_\gamma(R)$.
Primality transfers between $R$ and $R[t;\delta]$ with no further
hypothesis: unlike simplicity, which required characteristic zero or
the centre criterion of the previous subsection, gr-primality passes
unconditionally in both directions. This is the graded counterpart of
the unconditional half of Jordan's theorem stated in the introduction.

\begin{proposition}\label{REMSPM}
If $R$ is $\delta$-gr-prime, then $R[t;\delta]$ is gr-prime.
\end{proposition}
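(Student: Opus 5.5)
We argue by contradiction using the leading-coefficient ideals of Lemma \ref{REM5HP}. By the ideal-theoretic form of gr-primality recorded in Section \ref{EXTSN}, it suffices to show: if $\mathfrak{a},\mathfrak{b}$ are nonzero graded ideals of $R[t;\delta]$, then $\mathfrak{a}\mathfrak{b}\neq 0$. Let $m$ and $n$ be the minimal $t$-degrees of nonzero elements of $\mathfrak{a}$ and $\mathfrak{b}$. By Lemma \ref{REM5HP}, $A:=\mathcal{L}_m(\mathfrak{a})$ and $B:=\mathcal{L}_n(\mathfrak{b})$ are nonzero $\delta$-invariant graded ideals of $R$. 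Since $R$ is $\delta$-gr-prime, $AB\neq 0$. Because $A$ and $B$ are graded, there are homogeneous $a\in A$, $b\in B$ with $ab\neq 0$.

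Next we realise $a$ and $b$ as leading coefficients of homogeneous elements. By the gradedness argument in the proof of Lemma \ref{REM5HP}, there is a homogeneous $f\in\mathfrak{a}$ of $t$-degree $m$ with $\mathrm{lc}(f)=a$: take the $\deg(a)\gamma^m$-component of any witness. Likewise there is a homogeneous $g\in\mathfrak{b}$ of $t$-degree $n$ with $\mathrm{lc}(g)=b$. We would like $fg\neq 0$, but $\mathrm{lc}(f)\,\mathrm{lc}(g)$ can vanish even though $ab\ne0$. The trick is to insert elements of $R$ in between, since $\mathfrak{a}\mathfrak{b}\ni f r g$ for every $r\in R$.

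By \eqref{REMNPFA}, for $r\in R$ the product $frg$ has the form $a r b\, t^{m+n}$ plus terms of lower $t$-degree. Hence it suffices to find $r$ with $arb\neq 0$, that is, to show $aRb\neq 0$. Suppose instead that $aRb=0$. We upgrade this to $ARB=0$ and then derive a contradiction. Consider $I:=\{x\in A: xRb'=0 \text{ for all } b'\in B\}$. This is a graded ideal, and we check that it is $\delta$-invariant. If $x\in I$ and $b'\in B$, then $\delta(x)rb' = \delta(xrb') - x\delta(r)b' - xr\delta(b') = 0$, because $xrb'=0$, $\delta(r)\in R$, and $\delta(b')\in B$ by $\delta$-invariance of $B$. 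So $I$ is a $\delta$-invariant graded ideal with $IB=0$. Since $B\ne 0$, $\delta$-gr-primality forces $I=0$.

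The main obstacle is that $aRb=0$ alone does not place $a$ in $I$, since the defining condition of $I$ quantifies over all of $B$ and not just $b$. We therefore apply the construction twice. First set $J:=\{y\in B: Ry\subseteq \mathrm{ann}\ldots\}$; more cleanly, take $J:=\{y\in R : aRy=0\}$, which is a right ideal containing $b$. Then refine it to the two-sided, graded, $\delta$-invariant ideal $J':=\{y\in R: \mathfrak{c}Ry=0\}$, where $\mathfrak{c}$ is the smallest $\delta$-invariant graded ideal containing $a$. The main check is that $\mathfrak{c}Rb=0$. Here $\mathfrak{c}$ is spanned by the elements $R\delta^k(a)R$, and these must be handled by using the Leibniz rule to push $\delta$ off $a$. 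This is where we expect the difficulty. If this approach gets stuck, the fallback is to redo the leading-coefficient argument with $\mathfrak{a}' := \{h\in\mathfrak{a}:\ldots\}$, and to use that $\mathcal{L}_m(\mathfrak{a})$ already collects all $\delta^k$-images, because $tf-ft\in\mathfrak{a}$. In that form one chooses the homogeneous $a\in A$ and $b\in B$ only after knowing $ARB\neq0$, which follows from $AB\ne 0$ since $A\cdot 1\cdot B\subseteq ARB$. Once some $a\in A$, $b\in B$, $r\in R$ with $arb\neq0$ are in hand, the element $frg\in\mathfrak{a}\mathfrak{b}$ is nonzero, and the proof is complete.
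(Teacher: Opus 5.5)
Your closing argument is correct and is the paper's proof. The paper argues contrapositively: if $\mathfrak{a}\mathfrak{b}=0$, then the top coefficient $\mathrm{lc}(f)\,\mathrm{lc}(g)$ of every product $fg$ vanishes, so $AB=0$ in your notation, which is exactly your fallback with $r=1$. The obstacle you raise in the middle is not real, however. You chose $f,g$ with $\mathrm{lc}(f)=a$ and $\mathrm{lc}(g)=b$, so by \eqref{REMNPFA} $fg=ab\,t^{m+n}+(\text{lower terms})$ is a nonzero element of $\mathfrak{a}\mathfrak{b}$ as soon as $f,g$ are chosen. Likewise the supposition $aRb=0$ is refuted at once by $a\cdot 1\cdot b=ab\neq 0$. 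The detour through $I$, $J$, $J'$, $\mathfrak{c}$ should therefore be deleted; the homogeneity of $f,g$ is also not needed.
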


\begin{proof}
Let $\mathfrak{a}, \mathfrak{b}$ be nonzero graded ideals of
$R[t;\delta]$ with $\mathfrak{a}\mathfrak{b} = 0$, and let $n, m$
be their respective minimal $t$-degrees. By Lemma \ref{REM5HP},
$\mathcal{L}_n(\mathfrak{a})$ and $\mathcal{L}_m(\mathfrak{b})$
are nonzero $\delta$-invariant graded ideals of $R$.

Take homogeneous $x \in \mathcal{L}_n(\mathfrak{a})$ and
$x' \in \mathcal{L}_m(\mathfrak{b})$, witnessed by
$f = \sum_{i=0}^n a_i t^i \in \mathfrak{a}$ with $a_n = x$, and
$g = \sum_{j=0}^m b_j t^j \in \mathfrak{b}$ with $b_m = x'$. By
\eqref{REMNPFA},
\[
  (a_it^i)(b_jt^j) = \sum_{k=0}^i \binom{i}{k}\,
  a_i\,\delta^k(b_j)\,t^{\,i+j-k},
\]
a sum of terms of $t$-degree $i+j-k \leq i+j \leq n+m$, with equality
$i+j-k = n+m$ forcing $i=n$, $j=m$, $k=0$. Summing over all $i,j$,
the only contribution to the $t^{n+m}$-coefficient of $fg$ therefore
comes from the single term $i=n,j=m,k=0$, namely $a_n b_m = xx'$.
Since $f \in \mathfrak{a}$ and $g \in \mathfrak{b}$, we have
$fg \in \mathfrak{a}\mathfrak{b} = 0$, so every coefficient of $fg$
vanishes; in particular $xx' = 0$.

As $x$ and $x'$ were arbitrary homogeneous elements of
$\mathcal{L}_n(\mathfrak{a})$ and $\mathcal{L}_m(\mathfrak{b})$, and
both ideals are generated as additive groups by their homogeneous
elements, it follows that
$\mathcal{L}_n(\mathfrak{a})\,\mathcal{L}_m(\mathfrak{b}) = 0$,
contradicting $\delta$-gr-primality.
\end{proof}

\begin{proposition}\label{Y31A66}
If $R[t;\delta]$ is gr-prime, then $R$ is $\delta$-gr-prime.
\end{proposition}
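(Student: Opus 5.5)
We argue by contraposition, extending ideals of $R$ to ideals of $R[t;\delta]$ exactly as in the first half of the proof of Proposition \ref{LMQ7H}. Suppose $\mathfrak{a},\mathfrak{b}$ are nonzero $\delta$-invariant graded ideals of $R$ with $\mathfrak{a}\mathfrak{b}=0$. We will show that the extended sets
\[
  \mathfrak{a}[t;\delta] := \Bigl\{\sum_i x_i t^i : x_i \in \mathfrak{a}\Bigr\},
  \qquad
  \mathfrak{b}[t;\delta] := \Bigl\{\sum_j y_j t^j : y_j \in \mathfrak{b}\Bigr\}
\]
are nonzero graded ideals of $R[t;\delta]$ whose product vanishes. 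This contradicts gr-primality of $R[t;\delta]$ in its ideal-theoretic form, which the preliminaries record as equivalent to the element-level definition.

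\textbf{Step 1: $\mathfrak{a}[t;\delta]$ is a graded two-sided ideal.} Closure under left multiplication by $R$ is clear. Closure under right multiplication by $R$ follows from \eqref{REMNPFA}: $x t^i r = \sum_k \binom{i}{k} x\,\delta^k(r)\,t^{i-k}$, and each coefficient lies in $\mathfrak{a}$. Closure under right multiplication by $t$ is trivial. Closure under left multiplication by $t$ uses $tx = xt + \delta(x)$ together with $\delta(\mathfrak{a})\subseteq\mathfrak{a}$. Gradedness follows from \eqref{EXQBXX}: the $\tau$-component of $\sum_i x_i t^i$ is $\sum_i (x_i)_{\tau\gamma^{-i}} t^i$, and each $(x_i)_{\tau\gamma^{-i}}$ lies in $\mathfrak{a}$ because $\mathfrak{a}$ is graded. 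The ideal is nonzero since it contains $\mathfrak{a}$.

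\textbf{Step 2: $\mathfrak{a}[t;\delta]\,\mathfrak{b}[t;\delta]=0$.} It suffices to check products of monomials. By \eqref{REMNPFA},
\[
  (x t^i)(y t^j) = \sum_k \binom{i}{k}\, x\,\delta^k(y)\, t^{i+j-k},
\]
where $x\in\mathfrak{a}$ and $\delta^k(y)\in\mathfrak{b}$ by $\delta$-invariance. Hence every coefficient lies in $\mathfrak{a}\mathfrak{b}=0$, and the product vanishes.

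Given Proposition \ref{REMSPM} and the computations already made in Proposition \ref{LMQ7H}, nothing here is a serious obstacle. The only point needing care is that $\delta$-invariance of $\mathfrak{b}$ is used in Step 2 to place every $\delta^k(y)$ in $\mathfrak{b}$. An equally short alternative is element-level: take nonzero homogeneous $x\in\mathfrak{a}$ and $y\in\mathfrak{b}$, observe that $x\,R[t;\delta]\,y \subseteq \mathfrak{a}[t;\delta]\,\mathfrak{b}[t;\delta]=0$, and contradict gr-primality of $R[t;\delta]$ directly, since $x$ and $y$ are homogeneous in $R[t;\delta]$ by \eqref{EXQBXX}.
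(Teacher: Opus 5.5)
Your proof is correct and is essentially the paper's argument. You extend the $\delta$-invariant graded ideals to $\mathfrak{a}[t;\delta]$ and $\mathfrak{b}[t;\delta]$, use \eqref{REMNPFA} together with $\delta$-invariance of $\mathfrak{b}$ to show that their product vanishes, and then invoke gr-primality; your Step 1 is slightly more complete than the paper's, since you also verify closure under right multiplication by $R$.
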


\begin{proof}
Let $\mathfrak{a}, \mathfrak{b}$ be $\delta$-invariant graded ideals
of $R$ with $\mathfrak{a}\mathfrak{b} = 0$. As in the proof of
Proposition \ref{LMQ7H}, set
$\mathfrak{a}[t;\delta] := \{\sum_i x_i t^i : x_i \in \mathfrak{a}\}$
and likewise for $\mathfrak{b}[t;\delta]$; these are graded ideals of
$R[t;\delta]$, since closure under left multiplication by $t$ uses
$tx = xt + \delta(x)$ with $x, \delta(x) \in \mathfrak{a}$
(respectively $\mathfrak{b}$), and gradedness follows from
\eqref{EXQBXX}.

For $f = \sum_i x_it^i \in \mathfrak{a}[t;\delta]$ and
$g = \sum_j y_jt^j \in \mathfrak{b}[t;\delta]$, formula
\eqref{REMNPFA} gives
\[
  (x_it^i)(y_jt^j) = \sum_{k=0}^i \binom{i}{k}
  x_i\,\delta^k(y_j)\,t^{i+j-k}.
\]
Since $\mathfrak{b}$ is $\delta$-invariant, $\delta^k(y_j) \in
\mathfrak{b}$ for every $k$, so each coefficient $x_i\,\delta^k(y_j)$
lies in $\mathfrak{a}\mathfrak{b} = 0$, and hence equals $0$. Summing
over $i,j,k$, every term of $fg$ vanishes, so $fg = 0$; as $f,g$ were
arbitrary, $\mathfrak{a}[t;\delta]\,\mathfrak{b}[t;\delta] = 0$. The
gr-primality of $R[t;\delta]$ then forces
$\mathfrak{a}[t;\delta] = 0$ or $\mathfrak{b}[t;\delta] = 0$, that is,
$\mathfrak{a} = 0$ or $\mathfrak{b} = 0$.
\end{proof}

\begin{corollary}\label{PRZY0A}
$R[t;\delta]$ is gr-prime if and only if $R$ is $\delta$-gr-prime.
\end{corollary}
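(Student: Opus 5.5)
The statement is Corollary \ref{PRZY0A}: $R[t;\delta]$ is gr-prime if and only if $R$ is $\delta$-gr-prime. The plan is simply to combine the two implications already established in this subsection.

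For the direction from $R$ to $R[t;\delta]$, assume $R$ is $\delta$-gr-prime and invoke Proposition \ref{REMSPM}. Its proof takes nonzero graded ideals $\mathfrak{a},\mathfrak{b}$ of $R[t;\delta]$ with $\mathfrak{a}\mathfrak{b}=0$ and passes to the leading-coefficient ideals $\mathcal{L}_n(\mathfrak{a})$ and $\mathcal{L}_m(\mathfrak{b})$ at the minimal $t$-degrees. By Lemma \ref{REM5HP} these are $\delta$-invariant graded ideals of $R$. The top coefficient of a product $fg$ is $\mathrm{lc}(f)\,\mathrm{lc}(g)$, so $\mathcal{L}_n(\mathfrak{a})\mathcal{L}_m(\mathfrak{b})=0$, which contradicts $\delta$-gr-primality.

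For the converse, assume $R[t;\delta]$ is gr-prime and invoke Proposition \ref{Y31A66}. There, $\delta$-invariant graded ideals $\mathfrak{a},\mathfrak{b}$ of $R$ with $\mathfrak{a}\mathfrak{b}=0$ extend to graded ideals $\mathfrak{a}[t;\delta]$ and $\mathfrak{b}[t;\delta]$. Their product vanishes by \eqref{REMNPFA}, because $\mathfrak{b}$ is $\delta$-invariant. Gr-primality of $R[t;\delta]$ then forces $\mathfrak{a}=0$ or $\mathfrak{b}=0$.

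The only point worth checking is that both propositions use the same notion of gr-primality for $R[t;\delta]$. The Preliminaries note that the element-level definition is equivalent to the ideal-level one, so the two directions fit together with no further work. I expect no real obstacle here; the substance sits in the two propositions, and in particular in the leading-coefficient argument of Lemma \ref{REM5HP}.
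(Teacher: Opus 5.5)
Your proposal is correct and is exactly the paper's argument: the corollary is stated without further proof as the conjunction of Proposition \ref{REMSPM} (via the leading-coefficient ideals of Lemma \ref{REM5HP}) and Proposition \ref{Y31A66} (via the extended ideals $\mathfrak{a}[t;\delta]$, $\mathfrak{b}[t;\delta]$). One small wording point: the argument uses that the $t^{n+m}$-coefficient of $fg$ equals $\mathrm{lc}(f)\,\mathrm{lc}(g)$, not that this product is the leading coefficient of $fg$, since it may be zero.
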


\subsection{gr-Noetherianity}

For the Noetherian property we again track leading coefficients, but
now degree by degree: a graded right ideal of $R[t;\delta]$ is
recovered from the family of right ideals that its elements produce in each
$t$-degree.

For a graded right ideal $\mathfrak{a}$ of $R[t;\delta]$ and
$n \geq 0$, set
\[
  \mathcal{L}_n^r(\mathfrak{a}) :=
  \bigl\{\mathrm{lc}(f) : f \in \mathfrak{a},
  \deg_t(f) = n\bigr\} \cup \{0\}.
\]

\begin{lemma}\label{EXA5V}
Let $\mathfrak{a} \subseteq \mathfrak{b}$ be graded right
ideals of $R[t;\delta]$.
\begin{itemize}
\item[(1)] $\mathcal{L}_n^r(\mathfrak{a})$ is a graded right
  ideal of $R$.
\item[(2)] $\mathcal{L}_n^r(\mathfrak{a}) \subseteq
  \mathcal{L}_n^r(\mathfrak{b})$.
\item[(3)] If $\mathcal{L}_n^r(\mathfrak{a}) =
  \mathcal{L}_n^r(\mathfrak{b})$ for all integer $n$,
  then $\mathfrak{a} = \mathfrak{b}$.
\end{itemize}
\end{lemma}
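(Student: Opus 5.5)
The plan mirrors the proof of Lemma \ref{REM5HP}, adapted to one-sided ideals, followed by the standard leading-coefficient induction for (3).

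\emph{Part (1).} Let $x \in \mathcal{L}_n^r(\mathfrak{a})$ be witnessed by $f = \sum_{i=0}^n c_i t^i \in \mathfrak{a}$ with $c_n = x$. By \eqref{EXQBXX}, for each $\sigma$ with $x_\sigma \neq 0$ the component $f_{\sigma\gamma^n}$ lies in $\mathfrak{a}$, since $\mathfrak{a}$ is graded. This component has $t$-degree $n$ and leading coefficient $x_\sigma$, so $\mathcal{L}_n^r(\mathfrak{a})$ contains the homogeneous components of its elements.

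Closure under addition is immediate. If $x, y$ are witnessed by $f, g$, then $f+g \in \mathfrak{a}$ has $t^n$-coefficient $x+y$, and either $x+y = 0$ or $\deg_t(f+g) = n$. For right multiplication by $r \in R$, use \eqref{REMNPFA} exactly as in Lemma \ref{REM5HP}: $fr = xr\,t^n + (\text{terms of } t\text{-degree} < n)$. Hence $xr \in \mathcal{L}_n^r(\mathfrak{a})$, and $\mathcal{L}_n^r(\mathfrak{a})$ is a graded right ideal.

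\emph{Part (2).} This is immediate, because any witness in $\mathfrak{a}$ is also a witness in $\mathfrak{b}$.

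\emph{Part (3).} Suppose $\mathfrak{a} \subsetneq \mathfrak{b}$. Since $\mathfrak{b}$ is graded and $\mathfrak{a}$ is graded, some homogeneous element of $\mathfrak{b}$ lies outside $\mathfrak{a}$. Choose $g \in (\mathfrak{b}\setminus\mathfrak{a}) \cap R[t;\delta]_\tau$ of minimal $t$-degree $n$ among all homogeneous elements of $\mathfrak{b}\setminus\mathfrak{a}$.

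Its leading coefficient $y$ lies in $R_{\tau\gamma^{-n}}$, and $y \in \mathcal{L}_n^r(\mathfrak{b}) = \mathcal{L}_n^r(\mathfrak{a})$. By the gradedness argument of (1), there is a homogeneous $f \in \mathfrak{a}_\tau$ with $\deg_t(f) = n$ and leading coefficient $y$: take the $\tau$-component of any witness. Then $g - f \in \mathfrak{b}$ is homogeneous of degree $\tau$, has $t$-degree $< n$, and lies outside $\mathfrak{a}$. This contradicts the minimality of $n$. If $n = 0$, then $g - f$ would be $0$, so $g = f \in \mathfrak{a}$, again a contradiction.

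The only point needing care is the homogeneity in (3). Without it, the subtraction step might not preserve the minimality bookkeeping. The fix is to pick $f$ as a single graded component matching $\tau$, which \eqref{EXQBXX} allows, since the $t^n$-coefficient of a degree-$\tau$ element is forced into $R_{\tau\gamma^{-n}}$. No right multiplication by powers of $t$ is needed, because both leading terms already sit in the same $t$-degree $n$.
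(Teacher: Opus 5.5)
Your proposal is correct and follows the paper's route. Parts (1) and (2) are handled as in Lemma \ref{REM5HP}, and your minimal-counterexample argument in (3) is the same leading-coefficient descent as the paper's induction on $\deg_t(g)$ for $g \in \mathfrak{b}$. The one difference is your restriction to homogeneous elements in (3), which is harmless but unnecessary, so your closing remark that homogeneity is needed for the bookkeeping is not accurate: for an arbitrary $g \in \mathfrak{b}$, subtracting any witness $f \in \mathfrak{a}$ with $\deg_t(f) = \deg_t(g)$ and $\mathrm{lc}(f) = \mathrm{lc}(g)$ already lowers the $t$-degree.
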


\begin{proof}
Parts (1) and (2) follow as in the proof of Lemma \ref{REM5HP}.

For (3), assume $\mathcal{L}_n^r(\mathfrak{a}) =
\mathcal{L}_n^r(\mathfrak{b})$ for every $n \geq 0$; together with
the hypothesis $\mathfrak{a} \subseteq \mathfrak{b}$, it suffices to show $\mathfrak{b} \subseteq \mathfrak{a}$. We proceed by induction on $n := \deg_t(g)$ for $g \in \mathfrak{b}$. If $g = 0$,
then $g \in \mathfrak{a}$ trivially. Otherwise let $g \in
\mathfrak{b}$ be nonzero of $t$-degree $n$, and assume every element of $\mathfrak{b}$ of $t$-degree strictly less than $n$ already lies in $\mathfrak{a}$. Write $g = ct^n + g'$ with $c := \mathrm{lc}(g)
\neq 0$ and $\deg_t(g') < n$. Since $g \in \mathfrak{b}$, we have $c
\in \mathcal{L}_n^r(\mathfrak{b}) = \mathcal{L}_n^r(\mathfrak{a})$,
so there is $f \in \mathfrak{a}$ with $\deg_t(f) = n$ and
$\mathrm{lc}(f) = c$; write $f = ct^n + f'$ with $\deg_t(f') < n$.
Then
\[
  g - f = (ct^n + g') - (ct^n + f') = g' - f',
\]
an element of $t$-degree strictly less than $n$. Since $f \in
\mathfrak{a} \subseteq \mathfrak{b}$ and $g \in \mathfrak{b}$, also
$g - f \in \mathfrak{b}$, so $g - f \in \mathfrak{a}$ by the
induction hypothesis. As $f \in \mathfrak{a}$ as well, $g = (g-f) + f \in \mathfrak{a}$. By induction, every $g \in \mathfrak{b}$ lies in $\mathfrak{a}$, so $\mathfrak{b} \subseteq \mathfrak{a}$, and hence $\mathfrak{a} = \mathfrak{b}$.
\end{proof}

\begin{proposition}\label{CORIVL}
If $R$ is gr-right-Noetherian \textup{(}resp.\
gr-left-Noetherian\textup{)}, then so is $R[t;\delta]$.
\end{proposition}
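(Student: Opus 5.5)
The right Noetherian case runs through the leading-coefficient ideals of Lemma \ref{EXA5V}; I will then say what changes on the left. Let $\mathfrak{a}_1 \subseteq \mathfrak{a}_2 \subseteq \cdots$ be an ascending chain of graded right ideals of $R[t;\delta]$. By Lemma \ref{EXA5V}(1)--(2), each $\mathcal{L}_n^r(\mathfrak{a}_k)$ is a graded right ideal of $R$, and these ideals increase in $k$ for fixed $n$.

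The first step is monotonicity in $n$: $\mathcal{L}_n^r(\mathfrak{a}) \subseteq \mathcal{L}_{n+1}^r(\mathfrak{a})$. If $f \in \mathfrak{a}$ has $t$-degree $n$ and leading coefficient $c$, then $ft \in \mathfrak{a}$. Since $ft = \sum c_i t^{i+1}$ with no correction terms, $ft$ has degree $n+1$ and leading coefficient $c$. So the doubly indexed family $\mathcal{L}_n^r(\mathfrak{a}_k)$ increases in both indices. I would form the graded right ideal $\bigcup_{n,k}\mathcal{L}_n^r(\mathfrak{a}_k)$; it is a directed union, so it is a graded right ideal. Because $R$ is gr-right-Noetherian, every graded right ideal is finitely generated, by homogeneous generators. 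Hence the union stabilizes at some $\mathcal{L}_N^r(\mathfrak{a}_K)$: each of the finitely many generators lies in some member, and directedness puts them all in one member. For each of the finitely many $n < N$, the chain $\mathcal{L}_n^r(\mathfrak{a}_k)$ in $k$ stabilizes at some $K_n$. Take $K' := \max(K, K_0, \dots, K_{N-1})$. Then for $k \ge K'$ we have $\mathcal{L}_n^r(\mathfrak{a}_k) = \mathcal{L}_n^r(\mathfrak{a}_{K'})$ for all $n$: for $n \ge N$ both sides are squeezed between $\mathcal{L}_N^r(\mathfrak{a}_K)$ and the union, which coincide. Lemma \ref{EXA5V}(3) then gives $\mathfrak{a}_k = \mathfrak{a}_{K'}$.

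For the left case, I would write elements as $\sum t^i x_i$ with coefficients on the right. This requires knowing that $R[t;\delta]$ is also free as a right $R$-module on the $t^i$. That follows from \eqref{REMNPFA}: the transition between the two presentations is unitriangular. Left leading coefficients then form graded left ideals of $R$, by the symmetric version of Lemma \ref{REM5HP}. Left multiplication by $t$ preserves the right leading coefficient, since $t\,t^n x = t^{n+1}x$, and the same argument goes through. The gradedness of these right-sided leading-coefficient ideals follows as in Lemma \ref{REM5HP}, since $t^i R_\sigma = R_{\sigma}t^i + \text{lower}$ respects \eqref{EXQBXX} because $\gamma$ centralizes $\Gamma_R$.

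I expect the only delicate point to be this left-sided bookkeeping, namely checking that right-coefficient expansions are unique and still compatible with the grading. Alternatively, one can avoid it by passing to opposite rings. $R[t;\delta]^{\mathrm{op}} \cong R^{\mathrm{op}}[t;-\delta]$, and $-\delta$ is still a $\gamma$-derivation of $R^{\mathrm{op}}$, graded by the same support. Gr-left-Noetherianity of $R$ is gr-right-Noetherianity of $R^{\mathrm{op}}$, and the isomorphism is graded by the uniqueness in Proposition \ref{PRWBXK}. I would use this opposite-ring reduction, verifying the isomorphism via the universal property in Theorem \ref{Y6V1}.
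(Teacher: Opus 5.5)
Your right-handed argument is the paper's. Both use the shift inclusion $\mathcal{L}_s^r(\mathfrak{a}_j)\subseteq\mathcal{L}_{s+1}^r(\mathfrak{a}_j)$ coming from $ft$, stabilization in large degree and large index, a separate treatment of the finitely many low degrees, and then Lemma \ref{EXA5V}(3). The only difference is packaging. The paper applies ACC to the diagonal chain $\mathcal{L}_s^r(\mathfrak{a}_s)$, while you take the directed union $\bigcup_{n,k}\mathcal{L}_n^r(\mathfrak{a}_k)$ and use finite generation. The diagonal is cofinal in your doubly increasing family, so the two are interchangeable. You should add the one-line argument that ACC on graded right ideals yields finite generation by homogeneous elements, since the paper defines gr-Noetherian by ACC.

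The left case is where you genuinely diverge, and the detour is unnecessary. The paper keeps left coefficients. For a graded left ideal, the left leading coefficients in degree $n$ form a graded left ideal of $R$, because $r(ct^n+\cdots)=rct^n+\cdots$. Left multiplication by $t$ preserves them, because $t(c_it^i)=c_it^{i+1}+\delta(c_i)t^i$ and the correction term has lower degree; this works because $\sigma=\mathrm{id}_R$. So the ``delicate point'' you anticipate never arises. Your right-coefficient variant is also valid: right freeness follows from the unitriangular change of basis, and gradedness uses $\gamma\in C_\Gamma(\Gamma_R)$ as you say.

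For the opposite-ring route you finally chose, one correction is needed. $R^{\mathrm{op}}$ with $(R^{\mathrm{op}})_\sigma:=R_\sigma$ is not $\Gamma$-graded when the support does not commute. In Example \ref{EX2QJ7}, $R_bR_a=k\,ba\not\subseteq R_{ab}$. You must either:
\begin{itemize}
\item grade by $\Gamma^{\mathrm{op}}$, in which case $-\delta$ really is a $\gamma$-derivation; or
\item set $(R^{\mathrm{op}})_\sigma:=R_{\sigma^{-1}}$, in which case $-\delta\in\Der_{\gamma^{-1}}(R^{\mathrm{op}})$ with $\gamma^{-1}\in C_\Gamma(\Gamma_R^{-1})$.
\end{itemize}
Either choice is harmless for Noetherianity, since which one-sided ideals are graded depends only on the family of components, not on their labels.

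Note also that bijectivity of $R^{\mathrm{op}}[t;-\delta]\to R[t;\delta]^{\mathrm{op}}$ is exactly right freeness of $R[t;\delta]$ on $\{t^i\}$. So the bookkeeping is not avoided; it is absorbed into a second application of Theorem \ref{Y6V1} that produces the inverse, as in Corollary \ref{Y2J8Z}. With these adjustments your proof is complete.
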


\begin{proof}
We prove the right-handed case; the left-handed one is symmetric,
using $tf$ in place of $ft$ throughout, since
$t(c_it^i) = c_it^{i+1} + \delta(c_i)t^i$ by \eqref{SECPPAZ9} also
preserves the leading coefficient while raising the $t$-degree by
one.

Let $\mathfrak{a}_1 \subseteq \mathfrak{a}_2 \subseteq \cdots$ be an
ascending chain of graded right ideals of $R[t;\delta]$. Fix $j \geq
1$ and $f = \sum_{i=0}^s c_it^i \in \mathfrak{a}_j$ with $c_s \neq 0$.
Multiplication by $t$ on the right is the monomial shift $t^i\cdot t
= t^{i+1}$, so $ft = \sum_{i=0}^s c_it^{i+1}$, which has $t$-degree
$s+1$ and leading coefficient $c_s$. Since $\mathfrak{a}_j$ is a
right ideal, $ft \in \mathfrak{a}_j$, so $c_s \in
\mathcal{L}_{s+1}^r(\mathfrak{a}_j)$; as $c_s$ was an arbitrary
element of $\mathcal{L}_s^r(\mathfrak{a}_j)$, this proves
\begin{equation}\label{SHIFT}
  \mathcal{L}_s^r(\mathfrak{a}_j) \subseteq
  \mathcal{L}_{s+1}^r(\mathfrak{a}_j) \quad (s \geq 0,\ j \geq 1).
\end{equation}
For every $s \geq 0$, combining \eqref{SHIFT} with Lemma
\ref{EXA5V}(2) applied to $\mathfrak{a}_s \subseteq
\mathfrak{a}_{s+1}$ gives $\mathcal{L}_s^r(\mathfrak{a}_s) \subseteq
\mathcal{L}_{s+1}^r(\mathfrak{a}_s) \subseteq
\mathcal{L}_{s+1}^r(\mathfrak{a}_{s+1})$. Hence
$\mathcal{L}_1^r(\mathfrak{a}_1) \subseteq
\mathcal{L}_2^r(\mathfrak{a}_2) \subseteq \cdots$
is an ascending chain of graded right ideals of $R$ by Lemma
\ref{EXA5V}(1), so gr-right-Noetherianity of $R$ gives $s_0 \geq 1$
with
\begin{equation}\label{SEC7EQN}
  \mathcal{L}_{s_0}^r(\mathfrak{a}_{s_0}) =
  \mathcal{L}_{s_0+n}^r(\mathfrak{a}_{s_0+n})
  \quad (n \geq 0).
\end{equation}

We claim $\mathcal{L}_k^r(\mathfrak{a}_j) =
\mathcal{L}_{s_0}^r(\mathfrak{a}_{s_0})$ for all $k, j \geq s_0$. For
one inclusion, let $a \in \mathcal{L}_{s_0}^r(\mathfrak{a}_{s_0})$, be
witnessed by $f \in \mathfrak{a}_{s_0}$ with $\deg_t(f) = s_0$ and
$\mathrm{lc}(f) = a$. Since $k \geq s_0$, applying the shift
computation above $k - s_0$ times gives $\deg_t(ft^{k-s_0}) = k$ and
$\mathrm{lc}(ft^{k-s_0}) = a$; as $\mathfrak{a}_{s_0} \subseteq
\mathfrak{a}_j$, we have $f \in \mathfrak{a}_j$, so $ft^{k-s_0} \in
\mathfrak{a}_j$, giving $a \in \mathcal{L}_k^r(\mathfrak{a}_j)$.
Hence $\mathcal{L}_{s_0}^r(\mathfrak{a}_{s_0}) \subseteq
\mathcal{L}_k^r(\mathfrak{a}_j)$ for all $k,j \geq s_0$. For the
reverse inclusion, set $\mu := \max(j,k) \geq s_0$; then
\[
  \mathcal{L}_k^r(\mathfrak{a}_j) \subseteq
  \mathcal{L}_k^r(\mathfrak{a}_\mu) \subseteq
  \mathcal{L}_\mu^r(\mathfrak{a}_\mu) =
  \mathcal{L}_{s_0}^r(\mathfrak{a}_{s_0}),
\]
where the first inclusion is Lemma \ref{EXA5V}(2) applied to
$\mathfrak{a}_j \subseteq \mathfrak{a}_\mu$, the second is
\eqref{SHIFT} applied repeatedly from degree $k$ up to degree $\mu$
within the fixed ideal $\mathfrak{a}_\mu$, and the equality is
\eqref{SEC7EQN}. Combining both inclusions,
\begin{equation}\label{STABLE}
  \mathcal{L}_k^r(\mathfrak{a}_j) =
  \mathcal{L}_{s_0}^r(\mathfrak{a}_{s_0})
  \quad (k, j \geq s_0).
\end{equation}

It remains to handle the finitely many degrees $k < s_0$. For each
fixed $k \in \{0,\dots,s_0-1\}$, the sequence
$\mathcal{L}_k^r(\mathfrak{a}_1) \subseteq
\mathcal{L}_k^r(\mathfrak{a}_2) \subseteq \cdots$
is an ascending chain of graded right ideals of $R$ by Lemma
\ref{EXA5V}(1) and (2), so gr-right-Noetherianity gives $\nu_k$ with
$\mathcal{L}_k^r(\mathfrak{a}_j) =
\mathcal{L}_k^r(\mathfrak{a}_{\nu_k})$ for all $j \geq \nu_k$. Set
$j_0 := \max(\nu_0,\dots,\nu_{s_0-1},\,s_0)$. For $j \geq j_0$ and
$k < s_0$, since $j, j_0 \geq \nu_k$ the chain in degree $k$ has
already stabilized, so $\mathcal{L}_k^r(\mathfrak{a}_j) =
\mathcal{L}_k^r(\mathfrak{a}_{\nu_k}) =
\mathcal{L}_k^r(\mathfrak{a}_{j_0})$; for $k \geq s_0$, since $j, j_0
\geq s_0$, equation \eqref{STABLE} gives
$\mathcal{L}_k^r(\mathfrak{a}_j) =
\mathcal{L}_{s_0}^r(\mathfrak{a}_{s_0}) =
\mathcal{L}_k^r(\mathfrak{a}_{j_0})$.

Thus $\mathcal{L}_k^r(\mathfrak{a}_j) =
\mathcal{L}_k^r(\mathfrak{a}_{j_0})$ for every $k \geq 0$ and every
$j \geq j_0$, and Lemma \ref{EXA5V}(3), applied to $\mathfrak{a}_{j_0}
\subseteq \mathfrak{a}_j$, gives $\mathfrak{a}_j = \mathfrak{a}_{j_0}$
for all $j \geq j_0$. The chain stabilizes, and $R[t;\delta]$ is
gr-right-Noetherian.
\end{proof}

\begin{corollary}\label{LM7L3N}
If $R$ is gr-right-Noetherian \textup{(}resp.\
gr-left-Noetherian\textup{)}, then so is $R[t]$.
\end{corollary}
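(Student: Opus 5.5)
This corollary is the special case $\delta = 0$ of Proposition \ref{CORIVL}. The plan is to check that this specialization is legitimate within the paper's framework. The only point needing care is which grading $R[t]$ carries, because Proposition \ref{CORIVL} is stated for $\delta \in \Der_\gamma(R)$ with $R[t;\delta]$ graded by \eqref{EXQBXX}.

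First, I would note that the zero map is additive, satisfies the Leibniz rule, and sends every $R_\tau$ into $0 \subseteq R_{\tau\gamma}$. Hence $0 \in \Der_\gamma(R)$ for every $\gamma \in \Gamma$. In particular it is an $e$-derivation, and $e \in C_\Gamma(\Gamma_R)$ trivially. With $\delta = 0$ the defining relation \eqref{SECPPAZ9} becomes $tx = xt$, so $R[t;0] = R[t]$ as rings. Taking $\deg(t) = e$, formula \eqref{EXQBXX} gives the natural grading $R[t]_\tau = \bigoplus_{i\ge 0} R_\tau t^i$, and Proposition \ref{EQYUGZ}(1) confirms it is a good grading.

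Second, I would apply Proposition \ref{CORIVL} with $\delta = 0$: if $R$ is gr-right-Noetherian (resp.\ gr-left-Noetherian), then so is $R[t] = R[t;0]$.

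The main subtlety is whether the corollary is also meant for other degrees $\deg(t) = \gamma$. I would address this with the following observation. If $\gamma \in C_\Gamma(\Gamma_R)$, then \eqref{EXQBXX} with $\delta = 0$ is again a good grading, and Proposition \ref{CORIVL} applies verbatim. If $\gamma \notin C_\Gamma(\Gamma_R)$, the argument of Proposition \ref{CORIVL} still goes through once one checks that the formula \eqref{EXQBXX} is multiplicative. For that, in the product $(xt^i)(x't^j) = xx't^{i+j}$ one would need $\gamma^{i}$ to commute with $\deg x'$, which can fail. So I would restrict the statement to $\gamma \in C_\Gamma(\Gamma_R)$, and in particular to the standard choice $\deg(t) = e$.

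Beyond that, the proof of Proposition \ref{CORIVL} uses only two facts about a grading: that Lemma \ref{EXA5V}(1) holds, which relies on \eqref{EXQBXX}, and the right-shift $f \mapsto ft$. Both are available for any such $\gamma$, so nothing further needs to be checked.
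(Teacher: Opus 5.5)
Your proposal is correct and is exactly the paper's argument: the corollary is Proposition~\ref{CORIVL} specialized to $\delta = 0$. Here $0 \in \Der_\gamma(R)$ for any $\gamma \in C_\Gamma(\Gamma_R)$, and in particular $\gamma = e$ gives the standard grading $R[t]_\tau = \bigoplus_{i \geq 0} R_\tau t^i$. Your restriction to $\gamma \in C_\Gamma(\Gamma_R)$ also loses nothing: as in Proposition~\ref{EQYUGZ}(2), with $\delta = 0$ the relation $tx = xt$ for nonzero homogeneous $x$ already forces $\deg(t)$ to commute with $\Gamma_R$ in any compatible grading.
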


\begin{proof}
Apply Proposition \ref{CORIVL} with $\delta = 0$.
\end{proof}

\section{Invariance under graded stable isomorphism}\label{PRF6UW}

We close by asking which features of $R[t;\delta]$ are intrinsic to its graded stable isomorphism class. Gr-simplicity and gr-primeness turn out to be invariants, which is Theorem \ref{LMR83W}(1); more is true, and the differential polynomial structure itself is preserved through a compatible idempotent, which is Theorem \ref{LMR83W}(2).

We first show that gr-simplicity and gr-primality are invariants of $\approx_{\mathrm{gs}}$.

\begin{proposition}\label{LEMIP6BA}
Let $A$ and $B$ be $\Gamma$-graded rings with $A \approx_{\mathrm{gs}} B$. Then $A$ is gr-simple \textup{(}resp.\ gr-prime\textup{)} if and only if $B$ is.
\end{proposition}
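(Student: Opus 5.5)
We prove the statement using the corner description of Definition \ref{LMV0R2}. That is, we assume $A \cong_{\gr} \ell M_n(B)\ell$ for some $n$ and some full homogeneous idempotent $\ell \in M_n(B)_e$. We split the argument into two transfers.

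\textbf{Step 1: from $B$ to $M_n(B)$.} We show that $B$ and $M_n(B)$ have the same gr-simplicity and gr-primality. With the standard grading $M_n(B)_\sigma = M_n(B_\sigma)$, the graded ideals of $M_n(B)$ are exactly the sets $M_n(\mathfrak{b})$, where $\mathfrak{b}$ runs over the graded ideals of $B$. This follows by multiplying by the matrix units $e_{ij} \in M_n(B)_e$: if $X \in \mathfrak{A}$, then $e_{1i}Xe_{j1}$ isolates the entry $X_{ij}$. The lattice isomorphism already used in Corollary \ref{YP8TW}(3) then gives gr-simplicity. It also gives gr-primality, since $M_n(\mathfrak{a})M_n(\mathfrak{b}) = M_n(\mathfrak{a}\mathfrak{b})$ up to taking sums.

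\textbf{Step 2: from $S := M_n(B)$ to the corner $\ell S\ell$.} Here $\ell \in S_e$ is a full idempotent, meaning $S\ell S = S$. We set up maps between graded ideals:
\[
\mathfrak{I} \mapsto \ell\mathfrak{I}\ell, \qquad \mathfrak{J} \mapsto S\mathfrak{J}S.
\]
\begin{itemize}
\item Because $\ell$ has degree $e$, both maps send graded ideals to graded ideals. The map $x \mapsto \ell x\ell$ preserves degrees, and $S\mathfrak{J}S$ is generated by homogeneous elements.
\item For a graded ideal $\mathfrak{J}$ of $\ell S\ell$ we have $\ell(S\mathfrak{J}S)\ell = (\ell S\ell)\mathfrak{J}(\ell S\ell) = \mathfrak{J}$, using $\mathfrak{J} = \ell\mathfrak{J}\ell$.
\item For a graded ideal $\mathfrak{I}$ of $S$, fullness gives $S(\ell\mathfrak{I}\ell)S = S\ell S\,\mathfrak{I}\,S\ell S = S\mathfrak{I}S = \mathfrak{I}$.
\end{itemize}
So we get a lattice isomorphism that preserves $0$ and the whole ring. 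This transfers gr-simplicity immediately. For primeness we also need the map to respect products. On one side, $\ell\mathfrak{I}_1\mathfrak{I}_2\ell \supseteq (\ell\mathfrak{I}_1\ell)(\ell\mathfrak{I}_2\ell)$. On the other, $S\mathfrak{J}_1\mathfrak{J}_2S = S\mathfrak{J}_1 S\,\mathfrak{J}_2 S$, which holds because $\mathfrak{J}_1 = \mathfrak{J}_1\ell$ and $\ell S\ell \cdot \mathfrak{J}_2 \subseteq \mathfrak{J}_2$ (more precisely, $\mathfrak{J}_1 S \mathfrak{J}_2 = \mathfrak{J}_1 \ell S\ell \mathfrak{J}_2 \subseteq \mathfrak{J}_1\mathfrak{J}_2$). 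Therefore a product is zero on one side if and only if the corresponding product is zero on the other, and gr-primality transfers using its ideal-theoretic formulation from Section \ref{EXTSN}.

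\textbf{Step 3.} Finally, gr-simplicity and gr-primality are invariant under graded isomorphism, so composing Steps 1 and 2 gives the claim for $A$ and $B$. The roles of $A$ and $B$ are symmetric, because $\approx_{\mathrm{gs}}$ is symmetric via its $M_\infty$ form. Alternatively, the chain of equivalences above already runs in both directions.

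We expect the main obstacle to be the primeness bookkeeping in Step 2: checking carefully that the correspondence is multiplicative enough that zero products correspond, with homogeneity of $\ell$ used at each point. Everything else is the standard Morita corner argument, kept graded by the fact that $\ell$ has degree $e$.
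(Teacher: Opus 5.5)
Your proof is correct, but it is organized differently from the paper's. The paper proves only one implication directly. Assuming $A$ is gr-simple (resp.\ gr-prime), it sends a graded ideal $\mathfrak{b}$ of $B$ straight to $\ell M_n(\mathfrak{b})\ell$. It uses $1=\sum_j a_j\ell b_j$ to see that this corner ideal is nonzero when $\mathfrak{b}$ is, and notes that $\ell M_n(\mathfrak{b})\ell=\ell M_n(B)\ell$ forces $\ell\in M_n(\mathfrak{b})$ and hence $M_n(\mathfrak{b})=M_n(B)$. For primeness it checks that $\mathfrak{b}_1\mathfrak{b}_2=0$ gives $(\ell M_n(\mathfrak{b}_1)\ell)(\ell M_n(\mathfrak{b}_2)\ell)=0$. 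The reverse implication is then obtained ``by symmetry of $\approx_{\mathrm{gs}}$'', that is, by re-presenting $B$ as a full corner of a matrix ring over $A$; this uses the equivalence (C2)$\Leftrightarrow$(C4) in both directions for a general group. You instead factor through $M_n(B)$ and give an explicit inverse $\mathfrak{J}\mapsto S\mathfrak{J}S$ to $\mathfrak{I}\mapsto\ell\mathfrak{I}\ell$. This yields an inclusion-preserving bijection of graded-ideal lattices that matches vanishing products in both directions. The cost is a little more bookkeeping: the identities $\ell(S\mathfrak{J}S)\ell=\mathfrak{J}$ and $S(\ell\mathfrak{I}\ell)S=\mathfrak{I}$, and the inclusion $\mathfrak{J}_1S\mathfrak{J}_2\subseteq\mathfrak{J}_1\mathfrak{J}_2$, all of which you verify correctly. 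The gain is that both implications follow from the single presentation $A\cong_{\gr}\ell M_n(B)\ell$, with no appeal to symmetry. Your lattice isomorphism also transfers any property expressed through graded two-sided ideals and their products, not just gr-simplicity and gr-primality.
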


\begin{proof}
By Definition \ref{LMV0R2}, there are $n$, a full homogeneous idempotent $\ell \in M_n(B)_e$, and a graded isomorphism $A \cong_{\gr} \ell M_n(B)\ell$. Write $R := M_n(B)$; since $\ell$ is full, $R\ell R = R$.

Assume first that $A$ is gr-simple, and let $\mathfrak{b}$ be a nonzero proper graded ideal of $B$. Then $\mathfrak{d} := M_n(\mathfrak{b})$ is a nonzero proper graded ideal of $R$, and $\mathfrak{a} := \ell\mathfrak{d}\ell$ is a graded ideal of $\ell R\ell \cong A$. It is nonzero: if $\ell\mathfrak{d}\ell = 0$, then for any $x \in \mathfrak{d}$, writing $1_R = \sum_j a_j\ell b_j$
with $a_j, b_j \in R$ (possible since $R\ell R = R$) gives
\[x = \Bigl(\sum_j a_j\ell b_j\Bigr)\,x\,\Bigl(\sum_k a_k\ell b_k\Bigr)
  = \sum_{j,k} a_j\,\bigl(\ell\,b_j\,x\,a_k\,\ell\bigr)\,b_k.
\]
Since $b_j\,x\,a_k \in \mathfrak{d}$ (as $\mathfrak{d}$ is a
two-sided ideal) and $\ell\mathfrak{d}\ell = 0$, each summand
$\ell\,b_j\,x\,a_k\,\ell = 0$, so $x = 0$, contradicting
$\mathfrak{d} \neq 0$. By the gr-simplicity of $A$, $\mathfrak{a} = \ell R\ell$, so $\ell = \ell 1_R\ell \in \ell\mathfrak{d}\ell \subseteq \mathfrak{d}$, whence $R = R\ell R \subseteq \mathfrak{d}$, contradicting properness. Hence $B$ is gr-simple, and the converse follows by the symmetry of $\approx_{\mathrm{gs}}$.

Assume next that $A$ is gr-prime, and suppose for contradiction that $\mathfrak{b}_1, \mathfrak{b}_2$ are nonzero graded ideals of $B$ with $\mathfrak{b}_1\mathfrak{b}_2 = 0$. Set $\mathfrak{a}_i := \ell M_n(\mathfrak{b}_i)\ell$. Since each $M_n(\mathfrak{b}_i)$ is a nonzero graded ideal of $R$ (as $\mathfrak{b}_i \ne 0$), the argument of the gr-simple case shows $\mathfrak{a}_i \ne 0$. For any $x_1 \in M_n(\mathfrak{b}_1)$ and $x_2 \in M_n(\mathfrak{b}_2)$,
\[
  x_1\ell\, x_2 \in M_n(\mathfrak{b}_1)\cdot M_n(\mathfrak{b}_2) \subseteq M_n(\mathfrak{b}_1\mathfrak{b}_2) = 0.
\]
Hence $(\ell x_1\ell)(\ell x_2\ell) = \ell x_1\ell^2 x_2\ell = \ell(x_1\ell x_2)\ell = 0$, so $\mathfrak{a}_1\mathfrak{a}_2 = 0$, contradicting the gr-primality of $A$. This shows that $B$ is gr-prime; the converse follows by symmetry.
\end{proof}

We now show that the differential polynomial structure itself is preserved through a compatible idempotent.

\begin{proposition}\label{EQ11L}
Let $T$ be a graded ring, $\delta \in \Der_\gamma(T)$, and $\ell \in T_e$ an idempotent that is full in $T_e$ and satisfies $\delta(\ell) = 0$. Set $S := \ell T\ell$ with $S_\tau := \ell T_\tau\ell$, and define $\bar{\delta} \colon S \too S$ by $\bar{\delta}(s) := \ell\delta(s)\ell$. Then:
\begin{itemize}
\item[(1)] $\bar{\delta} \in \Der_\gamma(S)$;
\item[(2)] $\ell\, T[t;\delta]\,\ell \cong_{\gr} S[t;\bar{\delta}]$;
\item[(3)] $S[t;\bar{\delta}] \approx_{\mathrm{gs}} T[t;\delta]$.
\end{itemize}
\end{proposition}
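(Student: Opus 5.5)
Since $\ell\in T_e$ is idempotent with $\delta(\ell)=0$, the Leibniz rule gives, for $s=\ell x\ell\in S$,
\[
\delta(s)=\delta(\ell)x\ell+\ell\delta(x)\ell+\ell x\delta(\ell)=\ell\delta(x)\ell .
\]
So $\delta(S)\subseteq S$ and $\bar\delta$ is simply the restriction $\delta|_S$. Part (1) is then immediate. Leibniz holds on $S$ because it holds on $T$. For $s\in S_\tau=\ell T_\tau\ell$ we have $\delta(s)\in T_{\tau\gamma}\cap \ell T\ell=\ell T_{\tau\gamma}\ell$, using that $x\mapsto \ell x\ell$ preserves degrees because $\ell\in T_e$. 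The only point to watch is that $\Gamma_S\subseteq\Gamma_T$, so the centralizer hypothesis on $\gamma$ passes to $S$. Thus $S[t;\bar\delta]$ carries the good grading of Proposition \ref{EQYUGZ}.

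For (2), I would first note that $\ell$ commutes with $t$ in $T[t;\delta]$, since $t\ell-\ell t=\delta(\ell)=0$. Hence $\ell\bigl(\sum x_it^i\bigr)\ell=\sum(\ell x_i\ell)t^i$. This shows that $\ell T[t;\delta]\ell$ is exactly the set of polynomials with coefficients in $S$, and that it is a graded subring with identity $\ell$ and components $\ell\,T[t;\delta]_\tau\,\ell$. Next I would apply Theorem \ref{Y6V1} to the triple $(\ell T[t;\delta]\ell,\ \iota|_S,\ u:=\ell t=t\ell)$. Here $u$ has degree $\gamma$, and $us-su=\ell(ts-st)\ell=\delta(s)=\bar\delta(s)$. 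The theorem gives a graded homomorphism $S[t;\bar\delta]\to\ell T[t;\delta]\ell$ sending $\sum s_it^i\mapsto\sum s_i(\ell t)^i=\sum s_it^i$. Bijectivity then follows from freeness on $\{t^i\}$ on both sides.

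For (3), I would pass to $M_n$ via Definition \ref{LMV0R2}. We need a full homogeneous idempotent in some $M_n(T[t;\delta])_e$ cutting out $\ell T[t;\delta]\ell$. The natural candidate is $\ell$ itself, with $n=1$, viewed in $T[t;\delta]_e$. Fullness in $T[t;\delta]$ follows from fullness in $T_e$: from $1=\sum a_j\ell b_j$ with $a_j,b_j\in T_e\subseteq T[t;\delta]$ we get $T[t;\delta]\,\ell\,T[t;\delta]=T[t;\delta]$. Combining this with (2), $S[t;\bar\delta]\cong_{\gr}\ell\,M_1(T[t;\delta])\,\ell$, which is condition \eqref{YHR8J}, so the two rings are graded stably isomorphic.

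I expect no serious obstacle. The step needing the most care is (2), namely checking that the corner is really a graded ring whose grading matches \eqref{EXQBXX} for $S$. I would handle this with the uniqueness part of Proposition \ref{PRWBXK}: once the isomorphism restricts to the inclusion on $S$ and sends $t$ to a degree-$\gamma$ element, the gradings must agree.
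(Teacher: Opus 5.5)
Your treatment of (1) and (2) follows the paper's argument. Because $\delta(\ell)=0$, you get $\bar\delta=\delta|_S$ and $t\ell=\ell t$, and Theorem~\ref{Y6V1} then produces the map $\sum_i s_it^i\mapsto\sum_i s_it^i$, which is bijective by comparing coefficients. Your choice of target, the corner $\ell\,T[t;\delta]\,\ell$ with $u=\ell t$, is a little cleaner than the paper's $(T[t;\delta],\iota,t)$, because it makes $S\to\ell\,T[t;\delta]\,\ell$ unital. You also do not need Proposition~\ref{PRWBXK} at the end: a bijective graded homomorphism is automatically a graded isomorphism.

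The gap is in (3). You check that $\ell$ is full in the whole ring, $T[t;\delta]\,\ell\,T[t;\delta]=T[t;\delta]$. That is not the notion under which \eqref{YHR8J} yields \eqref{RTVA46} for the unshifted standard gradings. Here is a counterexample. Grade $B=M_2(k)$ by $\mathbb{Z}$ with $B_0=kE_{11}+kE_{22}$, $B_1=kE_{12}$, $B_{-1}=kE_{21}$, and take $\ell=E_{11}\in B_0$.
\begin{itemize}
\item[(a)] $B\ell B=B$, so $\ell$ is full in the whole ring.
\item[(b)] $\ell B\ell=kE_{11}$ is graded isomorphic to $k$ concentrated in degree $0$.
\item[(c)] $M_\infty(k)$ has zero component in degree $1$, while $M_\infty(B)_1\neq 0$, so $k$ and $B$ are not graded stably isomorphic.
\end{itemize}

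The relevant notion, and the one the paper actually proves, is fullness in the degree-$e$ component:
\[
(T[t;\delta])_e\,\ell\,(T[t;\delta])_e=(T[t;\delta])_e .
\]
Your own data close the gap at once. Since $a_j,b_j\in T_e\subseteq(T[t;\delta])_e$, the identity $1=\sum_j a_j\ell b_j$ puts $1$ in the two-sided ideal $(T[t;\delta])_e\,\ell\,(T[t;\delta])_e$ of the subring $(T[t;\delta])_e$. That ideal is therefore the whole subring. This also shows that the paper's term-by-term computation with $rt^ka_j$ is more than is needed.

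With this correction, your route through \eqref{YHR8J} with $n=1$ is the same as the paper's.
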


\begin{proof}
$(1)$ For $s = \ell x\ell \in S_\tau$ with $x \in T_\tau$, the Leibniz rule and $\delta(\ell) = 0$ give
\[
  \bar{\delta}(s) = \ell\delta(\ell x\ell)\ell = \ell\delta(x)\ell \in S_{\tau\gamma},
\]
since $\delta(x) \in T_{\tau\gamma}$. The map $\bar{\delta}$ satisfies the Leibniz rule, because $s_i = \ell s_i = s_i\ell$ for $s_i \in S$:
\[
  \bar{\delta}(s_1 s_2) = \ell\delta(s_1 s_2)\ell = \ell\delta(s_1)\ell\, s_2 + s_1\, \ell\delta(s_2)\ell = \bar{\delta}(s_1) s_2 + s_1\bar{\delta}(s_2).
\]
Hence $\bar{\delta} \in \Der_\gamma(S)$.

$(2)$ Since $\delta(\ell) = 0$, we have $t\ell = \ell t$, and induction with \eqref{REMNPFA} gives
\begin{equation}\label{SECK17N}
  t^i\ell = \ell t^i \quad (i \geq 0).
\end{equation}

By part (1), $\bar\delta(s) = \delta(s)$ for every $s \in S$. The inclusion $\iota\colon S \hookrightarrow T[t;\delta]$ and the element $u := t \in T[t;\delta]_\gamma$ therefore satisfy
\[
  u\,\iota(s) - \iota(s)\,u = \delta(s) = \bar\delta(s) = \iota(\bar\delta(s)) \quad (s \in S),
\]
so $(T[t;\delta], \iota, t)$ is an object of $\mathcal{C}_{\bar\delta}$. By Theorem \ref{Y6V1}, there is a unique graded ring homomorphism
\[
  \varphi \colon S[t;\bar\delta] \too T[t;\delta], \quad \varphi\Bigl(\sum_i s_i t^i\Bigr) = \sum_i s_i t^i.
\]
Since $\ell s_i = s_i = s_i\ell$ and $t^i\ell = \ell t^i$ by \eqref{SECK17N}, we have $\ell(\sum_i s_i t^i)\ell = \sum_i s_i t^i$, so the image of $\varphi$ lies in $\ell T[t;\delta]\ell$; restricting the codomain gives the graded ring homomorphism $\varphi \colon S[t;\bar\delta] \to \ell T[t;\delta]\ell$.
Surjectivity: $\ell f\ell$ with $f = \sum_i x_i t^i$ satisfies $\ell f\ell = \sum_i(\ell x_i\ell) t^i = \varphi(\sum_i(\ell x_i\ell) t^i)$ by \eqref{SECK17N}. Injectivity follows from a comparison of coefficients.

$(3)$ We show that $\ell$ is full in $(T[t;\delta])_e$, that is,
\[
  (T[t;\delta])_e \, \ell \, (T[t;\delta])_e = (T[t;\delta])_e.
\]

Note that $(T[t;\delta])_e = \bigoplus_{k \geq 0} T_{\gamma^{-k}}\,t^k$ by \eqref{EXQBXX} with $\tau = e$, a subring of $T[t;\delta]$. The inclusion $\subseteq$ holds because $(T[t;\delta])_e$ is a subring of $T[t;\delta]$ and $\ell \in (T[t;\delta])_e$. It therefore suffices to prove $\supseteq$. Since $\ell$ is full in $T_e$, choose $a_j, b_j \in T_e$ with
\begin{equation}\label{FULLDEC}
  \sum_j a_j \ell b_j = 1.
\end{equation}
It suffices to treat an arbitrary homogeneous $rt^k \in (T[t;\delta])_e$, where $k \geq 0$ and $r \in T_{\gamma^{-k}}$ by \eqref{EXQBXX}. Multiplying $rt^k$ on the right by \eqref{FULLDEC},
\begin{equation}\label{FULLSTEP1}
  rt^k = rt^k \Bigl( \sum_j a_j \ell b_j \Bigr) = \sum_j (rt^k a_j)\, \ell\, b_j.
\end{equation}
By \eqref{REMNPFA},
\[
  t^k a_j = \sum_{m=0}^{k} \binom{k}{m} \delta^m(a_j)\, t^{k-m},
\]
and since $a_j \in T_e$, $\delta^m(a_j) \in T_{\gamma^m}$ for each $m$. Hence
\begin{equation}\label{FULLSTEP2}
  rt^k a_j = \sum_{m=0}^{k} \binom{k}{m}\, r\,\delta^m(a_j)\, t^{k-m}.
\end{equation}
Each summand lies in $(T[t;\delta])_e$: its coefficient $r\,\delta^m(a_j)$ has degree  $\gamma^{m-k}$, so $r\,\delta^m(a_j) \in T_{\gamma^{m-k}}$ and $r\,\delta^m(a_j)\, t^{k-m} \in T_{\gamma^{m-k}}\, t^{k-m} \subseteq (T[t;\delta])_e$. Summing in \eqref{FULLSTEP2}, $rt^k a_j \in (T[t;\delta])_e$. Moreover $b_j, \ell \in T_e \subseteq (T[t;\delta])_e$. Substituting into \eqref{FULLSTEP1}, every term $(rt^k a_j)\,\ell\, b_j$ is a product of three elements of $(T[t;\delta])_e$ with middle factor a left multiple of $\ell$, so
\[rt^k = \sum_j (rt^k a_j)\, \ell\, b_j \in (T[t;\delta])_e \, \ell \, (T[t;\delta])_e.
\]
Since the homogeneous $rt^k$ generate $(T[t;\delta])_e$ as an abelian group, $\ell$ is full in $(T[t;\delta])_e$.

By \cite[Theorem 2.14]{abrams2022morita}, the fullness of $\ell$ gives a graded isomorphism
\[ M_\infty(T[t;\delta]) \cong_{\gr} M_\infty(\ell\, T[t;\delta]\, \ell).
\]
Combined with the graded isomorphism $S[t;\bar{\delta}] \cong_{\gr} \ell\, T[t;\delta]\,\ell$ of part (2),
\[
  M_\infty(S[t;\bar{\delta}]) \cong_{\gr} M_\infty(T[t;\delta]),
\]
that is, $S[t;\bar{\delta}] \approx_{\mathrm{gs}} T[t;\delta]$.
\end{proof}

\begin{remark}
The hypothesis $\delta(\ell) = 0$ in Proposition \ref{EQ11L} is no real restriction: it can always be arranged by an inner shift, though the derivation induced on $S$ changes in the process.

Set $r := \delta(\ell)\ell - \ell\delta(\ell) \in T_\gamma$ and
$\delta' := \delta - \delta_r$; the claim is that $\delta'(\ell) = 0$. Since $\ell^2 = \ell$, the Leibniz rule gives
$\delta(\ell) = \delta(\ell)\ell + \ell\delta(\ell)$,
hence $\ell\delta(\ell)\ell = 0$. So $\delta'(\ell) = \delta(\ell) - \delta_r(\ell) = 0$, as claimed.

This condition is not merely technical. Without $\delta'(\ell) = 0$, the map $s \mapsto \ell\delta(s)\ell$ need not be a $\gamma$-derivation of $S$, and the ring $S[t;\bar\delta]$ is then not defined at all. Since $\delta'(\ell) = 0$, Proposition \ref{EQ11L} applies to the pair $(\delta', \ell)$, yielding a derivation $\bar{\delta}' \in \Der_\gamma(S)$ defined by $\bar{\delta}'(s) := \ell\delta'(s)\ell$, together with the graded isomorphism
\[
  \ell\, T[t;\delta']\,\ell \cong_{\gr} S[t;\bar{\delta}'].
\]
As $\delta' = \delta - \delta_r$ is an inner shift, Corollary \ref{Y2J8Z} gives $T[t;\delta] \cong_{\gr} T[t;\delta']$, so altogether
\[ \ell\, T[t;\delta]\,\ell \cong_{\gr} S[t;\bar{\delta}']
  \quad\text{and}\quad
  S[t;\bar{\delta}'] \approx_{\mathrm{gs}} T[t;\delta].
\]
Thus even when $\delta(\ell) \neq 0$, the corner $\ell T[t;\delta]\ell$ is isomorphic to a graded differential polynomial ring over $S$, namely $S[t;\bar{\delta}']$, with the $\gamma$-derivation on $S$ being $\bar{\delta}'$, induced by $\delta'$ rather than by $\delta$ itself.
\end{remark}

We can now prove Theorem \ref{LMR83W}.

\begin{proof}[\textnormal{\textbf{Proof of Theorem \ref{LMR83W}} }]Part (1) is Proposition \ref{LEMIP6BA} applied to $A \approx_{\mathrm{gs}} R[t;\delta]$. For part (2), let $\ell \in R_e$ be a full idempotent with $\delta(\ell) = 0$. Proposition \ref{EQ11L}(2) gives $\ell R[t;\delta]\ell \cong_{\gr} (\ell R\ell)[t;\bar{\delta}]$. Since $A \approx_{\mathrm{gs}} R[t;\delta]$ arises from $\ell$, condition \eqref{YHR8J} gives $A \cong_{\gr} \ell R[t;\delta]\ell$, hence $A \cong_{\gr} (\ell R\ell)[t;\bar{\delta}]$.
\end{proof}

\printbibliography

@article{ait2024,
  author  = {Ait Mohamed, Yassine},
  title   = {On graded rings with homogeneous derivations},
  journal = {Revista de la Uni\'on Matem\'atica Argentina},
  year    = {2024},
  note    = {Published online (early view): December 22, 2024},
  doi     = {10.33044/revuma.4934},
  url     = {https://doi.org/10.33044/revuma.4934}
}

@book{goodearl2004introduction,
  author    = {Goodearl, Kenneth R. and Warfield, Robert B.},
  title     = {An Introduction to Noncommutative {N}oetherian Rings},
  series    = {London Mathematical Society Student Texts},
  volume    = {61},
  edition   = {2},
  publisher = {Cambridge University Press},
  address   = {Cambridge},
  year      = {2004},
  doi       = {10.1017/CBO9780511841699}
}

@article{ore1933theory,
  author    = {Ore, Oystein},
  title     = {Theory of non-commutative polynomials},
  journal   = {Annals of Mathematics},
  volume    = {34},
  number    = {3},
  pages     = {480--508},
  year      = {1933},
  publisher = {JSTOR},
  doi       = {10.2307/1968173}
}

@book{rowen1988ring,
  author    = {Rowen, Louis H.},
  title     = {Ring Theory},
  volume    = {I},
  publisher = {Academic Press},
  address   = {Boston},
  year      = {1988}
}

@article{jordan1975noetherian,
  title={Noetherian Ore extensions and Jacobson rings},
  author={Jordan, David Alan},
  journal={Journal of the London Mathematical Society},
  series={2},
  volume={10},
  number={3},
  pages={281--291},
  year={1975},
  publisher={Wiley Online Library},
  doi={10.1112/jlms/s2-10.3.281}
}

@phdthesis{jordan1979,
  author = {Jordan, D. A.},
  title = {Ore extensions and Jacobson rings},
  school = {University of Leeds},
  year = {1975}
}

@article{jordan1977primitive,
  title={Primitive Ore extensions},
  author={Jordan, DA},
  journal={Glasgow Mathematical Journal},
  volume={18},
  number={1},
  pages={93--97},
  year={1977},
  publisher={Cambridge University Press},
  doi={10.1017/S0017089500003086}
}

@article{oinert2013maximal,
  title={Maximal commutative subrings and simplicity of Ore extensions},
  author={{\"O}inert, Johan and Richter, Johan and Silvestrov, Sergei D},
  journal={Journal of Algebra and its Applications},
  volume={12},
  number={04},
  pages={1250192},
  year={2013},
  publisher={World Scientific},
  doi={10.1142/S0219498812501927}
}

@article{abrams2022morita,
  title={Morita equivalence for graded rings},
  author={Abrams, Gene and Ruiz, Efren and Tomforde, Mark},
  journal={Journal of Algebra},
  volume={617},
  pages={79--112},
  year={2023},
  publisher={Elsevier},
  doi={10.1016/j.jalgebra.2022.10.036}
}

@article{GradQ,
  author  = {Kanunnikov, A. L.},
  title   = {Graded Quotient Rings},
  journal = {Journal of Mathematical Sciences},
  volume  = {233},
  number  = {1},
  year    = {2018},
  doi     = {10.1007/s10958-018-3925-7},
  url     = {https://doi.org/10.1007/s10958-018-3925-7}
}

@article{bavula2020classification,
  title={Classification of Simple Modules of the Ore Extension K [X][Y; fd dX]},
  author={Bavula, Vladimir V},
  journal={Mathematics in Computer Science},
  volume={14},
  number={2},
  pages={317--325},
  year={2020},
  publisher={Springer},
  doi={10.1007/s11786-019-00414-7}
}

@book{nastasescu2011graded,
  title={Graded ring theory},
  author={Nastasescu, Constantin and Van Oystaeyen, Freddy},
  series={North-Holland Mathematical Library},
  volume={28},
  publisher={Elsevier},
  year={2011},
  note={Original edition: North-Holland, 1982 (no DOI located for the volume)}
}

@article{ion1988rings,
  title={Rings with large centers},
  author={ION D. ION },
  journal={Bulletin math{\'e}matique de la Soci{\'e}t{\'e} des Sciences Math{\'e}matiques de la R{\'e}publique Socialiste de Roumanie},
  volume={32},
  number={4},
  pages={305--310},
  year={1988},
  publisher={JSTOR}
}

@article{oinert2026central,
  title={Central idempotents in group-graded rings},
  author={{\"O}inert, Johan},
  journal={arXiv preprint arXiv:2605.20008},
  year={2026}
}

@article{jespers1993simple,
  title={Simple graded rings},
  author={Jespers, Eric},
  journal={Communications in Algebra},
  volume={21},
  number={7},
  pages={2437--2444},
  year={1993},
  publisher={Taylor \& Francis},
  doi={10.1080/00927879308824684}
}

@article{malcev1951fullordering,
  author    = {Anatoli I. Malcev},
  title     = {On the full ordering of groups},
  journal   = {Trudy Mat. Inst. Steklov},
  volume    = {38},
  pages     = {173--175},
  year      = {1951}
}

@article{kamoi1995noetherian,
  title={Noetherian rings graded by an abelian group},
  author={Kamoi, Yuji},
  journal={Tokyo Journal of Mathematics},
  volume={18},
  number={1},
  pages={31--48},
  year={1995},
  publisher={Publication Committee for the Tokyo Journal of Mathematics},
  doi={10.3836/tjm/1270043606}
}

@article{goto1983finite,
  title={Finite generation of Noetherian graded rings},
  author={Goto, Shiro and Yamagishi, Kikumichi},
  journal={Proceedings of the American Mathematical Society},
  volume={89},
  number={1},
  pages={41--44},
  year={1983},
  doi={10.1090/S0002-9939-1983-0706507-1}
}

@article{jeffry1998,
 ISSN = {00029939, 10886826},
 URL = {http://www.jstor.org/stable/118567},
 author = {Jeffrey Bergen and D. S. Passman},
 journal = {Proceedings of the American Mathematical Society},
 number = {6},
 pages = {1627--1635},
 publisher = {American Mathematical Society},
 title = {Enveloping Algebras of Lie Color Algebras: Primeness Versus Graded-Primeness},
 urldate = {2026-06-15},
 volume = {126},
 year = {1998}
}

@article{brown1977stable,
  author  = {Brown, Lawrence G. and Green, Philip and Rieffel, Marc A.},
  title   = {Stable isomorphism and strong {M}orita equivalence of {$C^*$}-algebras},
  journal={Pacific Journal of Mathematics},
  volume={71},
  number={2},
  pages={349--363},
  year={1977},
  publisher={Mathematical Sciences Publishers}
}

\end{document}